\documentclass[12pt,a4paper,oneside,headings=normal]{scrartcl}
\usepackage[left=3cm,right=3cm,top=2cm,bottom=2.5cm]{geometry}			
\usepackage[table,xcdraw]{xcolor}
\usepackage[framed, numbered]{matlab-prettifier}
\usepackage{setspace}													
\usepackage[bottom,hang]{footmisc}										
\usepackage[T1]{fontenc}												
\usepackage[utf8x]{inputenc}											
\usepackage[ngerman,english]{babel}									
\usepackage{mathrsfs,amsmath}
\allowdisplaybreaks
\usepackage{amsthm}														
\usepackage{amstext}													
\usepackage{amssymb}													
\usepackage{amsfonts}													
\usepackage{dsfont} 													
\usepackage{mathtools}
\usepackage[numbers, square]{natbib}

\usepackage{url}														
\usepackage{graphicx}													
\usepackage[bf,hang,nooneline,justification=centering 
]{caption}			
  
\usepackage{tabularx}													
\usepackage{multirow}													
\usepackage{booktabs}													
\usepackage{acronym}
\usepackage[titletoc,title]{appendix}								
\usepackage{ucs} 														
\usepackage{placeins}
\usepackage{lmodern} 													
\usepackage{xcolor}														
\usepackage{hyperref}													
\usepackage{stmaryrd}
\usepackage{ marvosym } 
\usepackage{comment}


\def\XXint#1#2#3{{\setbox0=\hbox{$#1{#2#3}{\int}$ }
\vcenter{\hbox{$#2#3$ }}\kern-.6\wd0}}

\usepackage{paralist}
\numberwithin{equation}{section}

\makeatletter
\def\moverlay{\mathpalette\mov@rlay}
\def\mov@rlay#1#2{\leavevmode\vtop{%
   \baselineskip\z@skip \lineskiplimit-\maxdimen
   \ialign{\hfil$\m@th#1##$\hfil\cr#2\crcr}}}
\newcommand{\charfusion}[3][\mathord]{
    #1{\ifx#1\mathop\vphantom{#2}\fi
        \mathpalette\mov@rlay{#2\cr#3}
      }
    \ifx#1\mathop\expandafter\displaylimits\fi}
\makeatother

\newcommand{\bigcupdot}{\charfusion[\mathop]{\bigcup}{\cdot}}

\theoremstyle{definition} 
\newtheorem{theorem}{Theorem}[section]                      
                

\newtheorem{lemma}[theorem]{Lemma}
\newtheorem{definition}[theorem]{Definition}

\newtheorem{proposition}[theorem]{Proposition}

\newtheorem{remark}[theorem]{Remark}

\newtheorem{ex}[theorem]{Example}

\newenvironment{beweis}{\begin{proof}[Proof]}{\end{proof}}





\DeclareMathOperator*{\Exp}{Exp}






\DeclareMathOperator{\ccc}{C}


\newcommand{\N}{\mathbb{N}}
\newcommand{\Ex}{\mathbb{E}}
\newcommand{\PR}{\mathbb{P}}

\newcommand{\R}{\mathbb{R}}




\renewcommand{\phi}{\varphi}
\renewcommand{\epsilon}{\varepsilon}
\newcommand{\eps}{\varepsilon}


\newcommand*{\f}{\mathcal{F}}



\DeclareMathOperator{\spt}{spt}


\newcommand{\ttt}[1]{\text{#1}}

\newcommand{\ind}[1]{\mathds{ 1 }_{\{{#1}\}}}
\newcommand{\inda}[1]{\mathds{ 1 }_{{#1}}}

\newcommand{\edp}{e_{D,\Psi}}

														

\graphicspath{ {Bilder/} }
\usepackage{tikz}
\usetikzlibrary{arrows,automata,shapes,calc}
\usepackage{setspace}

\addtolength{\footskip}{-8mm}

\title{On the time consistent solution to optimal stopping problems with expectation constraint}
\author{S. Christensen\thanks{Kiel University, Mathematical Department, \emph{Email:} \href{mailto:christensen@math.uni-kiel.de}{christensen@math.uni-kiel.de} } , 
M. Klein\thanks{Kiel University, Mathematical Department, \emph{Email:} \href{mailto:maike.klein@math.uni-kiel.de}{maike.klein@math.uni-kiel.de}} , 
B. Schultz\thanks{Kiel University, Mathematical Department, \emph{Email:} \href{mailto:schultz@math.uni-kiel.de}{schultz@math.uni-kiel.de}\\[0.1cm]
We thank two anonymous referees for carefully reading the manuscript and highly appreciate their comments, which contributed to improve our paper.}
}
\date{\today}

\begin{document}

	\maketitle
	\begin{abstract}
    We study the (weak) equilibrium problem arising from the problem of optimally stopping a one-dimensional diffusion subject to an expectation constraint on the time until stopping. The weak equilibrium problem is realized with a set of randomized but purely state dependent stopping times as admissible strategies. We derive a verification theorem and necessary conditions for equilibria, which together basically characterize all equilibria. Furthermore, additional structural properties of equilibria are obtained to feed a possible guess-and-verify approach, which is then illustrated by an example.
    \end{abstract}

	\begin{center}\footnotesize
		\begin{tabular}{l@{ : }p{8.5cm}}
			{\itshape 2020 MSC} & 60G40, 60J70, 91A25,91B51.
			\\
			{\itshape Keywords} & Optimal stopping, expectation constraint, equilibrium, time consistent solution.
		\end{tabular}
	\end{center}

\section{Introduction}
In this paper we consider the stopping problem of maximizing the reward functional $\Ex_x[ e^{-r\tau}g(X_\tau)]$ subject to $\Ex_x[\tau] \le T$, where $X=(X_t)_{t \in [0,\infty)}$ is a regular Itô diffusion on an interval $I\subset \R$, $x$ is the starting value of $X$, $g$ is a payoff function, $r\ge0 $ is a discount factor and $T>0$ is a time constraint.

This problem generalizes optimal stopping with fixed time horizon to include situations where, e.g.\ due to uncertainty in planning, such a time constraint seems unreasonable although a tractable notion of a time limitation should be imposed. Particularly if such problems occur \textit{repeatedly} and \textit{independently} it appears to be a natural choice to bound the average time spend. Optimal stopping problems with expectation constraints were first studied 
by Kennedy \cite{kennedy1982} in 1982 and later on were analyzed in e.g.\ \cite{miller2017, AKK, 3points, BayraktarYao2020, BayraktarYao2023}.

Most recently, the question of time-(in)consistency of optimal stopping and more general stochastic control problems has increasingly become the focus of mathematical discussion, cf.\ \cite{bjoerk2021time}. The main idea is that in practical optimal stopping an agent observes a single path $t\mapsto X_t(\omega)$ of the state process and makes a stopping decision at each point in time based on her past observations. This gives the agent the ability to reconsider her initial strategy (consisting of a stopping time) for maximizing her utility, if her preferences change in the future. This idea traces back to the concept of \textit{consistent planning} pioneered by Strotz\ \cite{strotz}.

In the following, we elaborate the (rather far-reaching) implications of this comparatively simple observation for the problem with expectation constraint, but first we take a closer look at how the expectation constraint turns the classical, time-consistent problem of maximizing $\Ex_x[ e^{-r\tau}g(X_\tau)]$ (without the constraint and infinite time horizon) into a time-inconsistent one. 

An agent facing this problem by observing the state process and acting upon it would, during her time of observing, i.e.\ for all $t$ up to the time she stops, try to solve the problem of maximizing $e^{-rt}\Ex_{X_t(\omega)}[e^{-r\tau_t} g(X_{\tau_t})]$ over all stopping times $\tau_t \ge 0$. By the Markovian nature of the state process $X$, these problems are all structurally the same. This leads to the well known fact that for the unconstrained problem it is optimal to make the stopping decision solely based on the current state of the process, i.e.\ \textit{the optimal stopping time is a first exit time}.

The situation changes if we introduce the expectation constraint. Even though the optimization target at time $t$ is still given by $e^{-rt}\Ex_{X_t(\omega)}[e^{-r\tau_t} g(X_{\tau_t})]$ we can now only choose stopping times $\tau_t$ such that the expectation constraint is met. Thus, to implement an optimal stopping strategy in practice, an agent who makes her stopping decision at each point in time based on her single observation of the state process and the \textit{stopping strategy} up to that point faces the problem that she must still satisfy her initial constraint of $\Ex_x[\tau]\le T$ even though some time has passed.

It is common to assume that the agent displays exactly this type of precommitment behavior in the mathematical literature (see discussion below), while in practice this seems to be the exception rather than the rule. To highlight that this problem setting is particularly prone to such irrationalities note that the expectation constraint allows for \textit{sacrificial strategies}, such as to stop early in some instances, or in other words \textit{sacrifice} potential future gain, in order to save up time to wait for a maximal gain on other occasions and still meet the expectation constraint. Moreover, the stopping decision must not even be solely based on the state process but could also rely on some external randomization device. Even if such a strategy is deemed optimal at first, in practice it seems very likely that at the time such a strategy calls for stopping an agent will fail to commit to it. In the agent's mind, it now seems preferable to keep the process alive for some potential future benefit and sacrifice another \textit{future run} rather than satisfy the constraint. 

Due to the Markovian nature of the state process, especially when the time horizon $T$ is rather large compared to the elapsed time, it seems even more likely that from the agents' point of view the problems at time 0 and at time $t$ are largely the same, up to the current state of $X$, and should be treated as such. This means at time $t$ the constraint is perceived as 
\begin{align*}
    \Ex_{X_t(\omega)}[\tau_t] \le T  
\end{align*}
and not as $\Ex_x[\ind{\tau > t}(\tau_t +t)] \le T- \Ex_x[\ind{\tau\le t} \tau]$. Embedding that into the space time frame and accounting for all possible states of $X_t$ we end up with the constraint $$ \Ex_y[\tau_t]\le T $$
for all $y\in I$ and all $t\in[0,\infty)$. To address the issues of partially rational behavior, such as procrastination the problem is now treated as a game-theoretic (weak) equilibrium problem. We realize the (weak) equilibrium problem with a set of purely state dependent stopping times, see Section \ref{Ch2}.\ This allows us to reformulate the problem as the (weak) equilibrium problem corresponding to the maximization of  $\Ex_x[ e^{-r\tau}g(X_\tau)]$ subject to $\Ex_y[\tau] \le T$ for all $y \in I$. 

\subsection{Structure of the paper}
The exact problem setting as well as the concept of a (weak) equilibrium is formally introduced in Section \ref{Ch2}. In Section \ref{Ch3} we first derive necessary conditions for equilibria. Then we show a verification theorem, see Theorem \ref{veri}, which completes the necessary conditions to basically characterize all equilibria. In Section \ref{Ch4} we propose a scheme to construct equilibria with an example provided in Section \ref{Ch5}.

\subsection{Literature}
Different formulations of equilibrium problems to various problems in optimal stopping/control have been treated by now. In \cite{bjoerk2021time} the authors discuss some general types of reward functionals and payoff functions that induce time inconsistent stopping/control problems. Some of the special cases that received the most attention are so called non-exponential discounting, treated in \cite{ekeland2006noncommitment, BodnariuChristensenLindensjoe2022, huang2017timeconsistent, huang2020, huang2020_2, huang_zhou_diskr,bayraktar2022equilibria, ebert2020_weighteddiscount, zhou_fin_time_non_exp}, Mean-Variance problems, see e.g.\ \cite{Bayraktar_Zhang_Zhou,ChristensenLindensjoe_meanvariance} or reward functionals based on payoff functions with explicit dependence on the initial state, see \cite{ChristensenLindensjoe_equilibrium}. \cite{ChristensenLindensjoe_optimaldividend} considers optimal dividend problems where, similar to the problem we consider in the present work, the time-inconsistency stems from a moment constraint.
 
Especially for continuous time problems with non discrete state space $\mathcal{X}\subset \R^n$, there are multiple equilibrium concepts to model the previously mentioned aspect of consistent planing in optimal stopping. A notable aspect of such equilibrium problems is the choice of the set of admissible stopping times. Usually, to achieve compatibility with the common equilibrium concepts, some \textit{Markovian} structure must be imposed for stopping times to be admissible strategies. For Itô diffusions the monograph \cite{bjoerk2021time} introduces a version of the most common notion of an equilibrium in continuous time optimal stopping that is, like the one we consider, based on a first order condition, but with first exit times of the space time process as admissible stopping times. Other works that feature closely related equilibrium concepts based on a first order condition are \cite{BodnariuChristensenLindensjoe2022, ChristensenLindensjoe_mixedstrategy, ebert2020_weighteddiscount, zhou_fin_time_non_exp}. The first two even allow for mixed/randomized \textit{Markovian} stopping strategies. Mixed stopping strategies in discrete time frameworks are treated in \cite{Bayraktar_Zhang_Zhou, ChristensenLindensjoe_meanvariance}. \cite{bayraktar2022equilibria} introduces and compares the concepts of mild, weak, strong and optimal mild equilibria in a diffusion setting with only pure first entry times being admissible, while the weak equilibrium corresponds to the concept in \cite{bjoerk2021time}.

The introduction of randomized stopping times follows the well-known principle from game theory: In general, there are no equilibria in pure strategies. This is also the case for our problem class, see Example \ref{nofe}. This changes if the players are allowed to randomize. Since the randomization here must be of Markov type by the problem formulation (see \eqref{shiftprop} below), this leads to the class formally introduced in\ Definition \ref{rmt} below. For a detailed discussion and motivation we refer to \cite{BodnariuChristensenLindensjoe2022, ChristensenLindensjoe_mixedstrategy}.
\\

For the formulation of the equilibrium problem with expectation constraint we drew inspiration from corresponding (precommitment) optimal stopping problems. Such optimal stopping problems were first examined by Kennedy \cite{kennedy1982} for discrete time processes. Using Lagrangian techniques he reduces the constrained problem to a classical unconstrained one and obtains optimal stopping times in the constrained problem. 
The  Lagrangian approach has often been successfully employed to fully solve optimal stopping problems even in continuous time with expectation constraints, e.g.\ in \cite{Makasu, PeskirQuickestDetection2012, PedersenPeskirMeanVariance}.


In \cite{miller2017} and \cite{AKK} the optimal stopping problem with expectation constraint is turned into an unconstrained optimal control problem with extended state space. 
The first article~\cite{miller2017} exploits the optimal stopping problem with expectation constraint for solving a time-inconsistent, but unconstrained stopping problem. The article \cite{AKK} formulates a dynamic programming principle (DPP), characterizes the value function in terms of a Hamilton-Jacobi-Bellman equation and proves a verification theorem. Bayraktar and Yao \cite{BayraktarYao2020} provide a proof of the DPP in a general non-Markovian framework with a series of inequality-type and equality-type expectation constraints. Moreover, \cite{BayraktarYao2023} extends \cite{BayraktarYao2020} to the case where in the constrained optimal stopping problem also the diffusion is controlled. 

In \cite{ChowYuZhou} a DPP for an optimal control problem with intermediate expectation constraints at each point in time in a general non-Markovian framework is derived. 

For optimal stopping problems of one-dimensional processes with stopping times satisfying an expectation constraint \cite{3points} shows that the set of stopping times can be restricted to those stopping times such that the law of the process at the stopping time is a weighted sum of three Dirac measures.


\section{Model and problem formulation}\label{Ch2}

In this paper we consider a 1-dimensional Itô diffusion $X = (X_t)_{t \in [0,\infty)}$ taking values in an interval $(\alpha, \beta)$ with $ - \infty \le \alpha < \beta \le \infty$ and 
defined on a filtered probability space $(\Omega, \f, \mathbb F,\PR)$ with filtration $\mathbb F:= (\f_t)_{t \in [0,\infty)}$ satisfying the usual hypotheses. 
Generally we assume that $X$ is the strong solution to the stochastic differential equation
\begin{align}
    d X_t = \mu(X_t) dt + \sigma(X_t) dW_t, \quad X_0=x \notag
\end{align} with an $\mathbb F$-adapted, real valued, standard Brownian motion $W = (W_t)_{t \in [0,\infty)}$ and Lipschitz continuous coefficients $\mu\colon (\alpha, \beta) \to \R$, $\sigma\colon (\alpha, \beta) \to (0,\infty)$. The infinitesimal generator $\mathcal{A}$ of $X$ is given by $\mathcal{A}f(x)=\mu(x)\frac{\partial f}{\partial x}(x) +\frac{\sigma^2(x)}{2} \frac{\partial^2 f}{\partial x^2}(x)$ with $f:(\alpha,\beta)\to \R$ twice continuously differentiable. 

Other general notations used in the following are $\overline{D}$ for the closure of a set $D\subset (\alpha,\beta)$, $D^\circ$ for its interior, $\partial D$ for its boundary and $D^c$ for its complement $(\alpha, \beta)\setminus D$. Given $D_1\subset D_2\subset (\alpha,\beta)$ and a function $f\colon D_2\to \R$ we denote the restriction of $f$ to $D_1$ by $f\vert_{D_1}$. For $x\in(\alpha,\beta)$ and $f\colon(\alpha,\beta)\to \R$ we set $f(x-):=\lim_{y\nearrow x}f(y)$ and $f(x+):=\lim_{y\searrow x} f(y)$ whenever the corresponding limit exists.

Furthermore, let $\mathbb F^X:= (\f^{X}_t)_{t \in [0,\infty)}$ be the canonical filtration of $X$, i.e.\ $\f_t^X= \sigma(X_s: s\le t)$. We also set $\f^X_\infty := \sigma( \bigcup_{t \ge 0}\f_t^X) $. 
For every Borel measurable $D \subset (\alpha, \beta)$ and $h>0$ we define $\mathbb F^X$-stopping times
\begin{align}
    \tau^D &:= \inf \{ t \ge 0 \colon X_t \not\in D\} \quad \ttt{ and}\notag\\ 
    \tau_h&:= \inf \{ t\ge 0 \colon \vert X_t - X_0 \vert \ge h\}.\notag
\end{align}

Moreover, we assume that there is an $\mathbb F$-adapted Poisson process $N=(N_t)_{t \in [0,\infty)}$ on $(\Omega, \f, \mathbb F,\PR)$ with intensity 1, that is independent of $X$ and $W$. The process $N$ acts as an \textit{external} source of randomness in the model that is going to be introduced in the following.

For fixed $D\subset (\alpha,\beta)$ the set of càdlàg functions $\psi\colon(\alpha, \beta) \to [0, \infty)$ with finitely many discontinuities that vanish on $(\alpha, \beta) \setminus \overline{D}$ shall be denoted by $\mathfrak{V}_D$. For $\psi\in \mathfrak{V}_D$ we set
\begin{align}
    \Psi=(\Psi_t)_{t\in [0,\infty)}, \quad \Psi_t := \int_0^t \psi(X_s) ds\label{defpsi}
\end{align} 
and denote the set of such processes $\Psi$ by $\mathfrak{W}_D$. For each $D \subset (\alpha,\beta)$ and $\Psi \in \mathfrak{W}_D$ let $\mathbb F^{X,N_\Psi}:=(\f^{X,N_\Psi}_t)_{t\in [0,\infty)}$ given by $\f^{X,N_\Psi}_t:= \sigma((X_s,N_{\Psi_s}): s \le t)$ for all $t \in [0, \infty)$ denote the canonical filtration of the 2-dimensional Markov process $((X_t,N_{\Psi_t}))_{t\in [0,\infty)}$. Moreover, let $\f^{X,N_\Psi}_\infty:= \sigma(\bigcup_{t\ge 0}\f^{X,N_\Psi}_t)$. With this we define $\tau^\Psi$ as the first jump time of the process $N_\Psi$, i.e.
\begin{align*}
    \tau^\Psi := \inf\{t \ge 0\colon N_{\Psi_t}\ge1\}.
\end{align*} 
Moreover, we set
\begin{align}
    \tau^{D, \Psi} := \tau^D \wedge \tau^\Psi \label{randmark}
\end{align} where $\wedge$ denotes the minimum. 

As usual we denote the conditional distribution of $\PR$ given $(X_0,N_{\Psi_0}) = (x,n)$ by $\PR_{x,n}$ and the expectation with respect to $\PR_{x,n}$ by $\Ex_{x,n}$. For short we write $\PR_x$ and $\Ex_{x}$ instead of $\PR_{x,0}$ and $\Ex_{x,0}$, respectively.

\begin{definition} (randomized Markovian time) \label{rmt}\\
If $D$ is open, we call stopping times of type \eqref{randmark} \textit{randomized Markovian time}. 
The set of randomized Markovian times is denoted by
\begin{align}
    \mathcal{S} &:= \{ \tau^{D, \Psi} \colon D\subset(\alpha, \beta) \ttt{ open}, \Psi\in \mathfrak{W}_D \} \notag
\end{align} 
and define the set of randomized Markovian times with expectation up to $T\in [0,\infty)$ as
\begin{align}
    \mathcal{S}(T) := \{ \tau \in \mathcal{S}\colon \Ex_x [\tau] \le T \,\,\,\, \forall \, x\in (\alpha, \beta) \}.\notag
\end{align}
\end{definition}

\begin{remark}\label{rem1}
\begin{enumerate}[(i)]
    \item The requirement of right continuity and existence of real left limits in the definition of $\mathfrak{V}_D$ could also be replaced by left continuity and the existence of real right limits with all the following statements and proofs remaining analogous. As $\Psi$ defined by \eqref{defpsi}, and thus also any randomized Markovian time $\tau^{D,\Psi}$, depends on $\psi$ only up to null sets, the set $\mathcal{S}$ remains unchanged by this assumption anyway.
    \item The fact that the definition of $\mathcal{S}$ only involves open $D\subset (\alpha,\beta)$ instead of general Borel measurable $D\subset (\alpha,\beta)$ is not a loss of generality, since for every Borel measurable $D\subset (\alpha,\beta)$ and $\Psi\in \mathfrak W_D$ the randomized Markovian time $\tau^{D^\circ,\Psi}\in \mathcal{S}$ has the same distribution as $\tau^{D,\Psi}$. To show this note first that $\mathfrak W_D\subset \mathfrak W_{D^\circ}$. The Lipschitz continuity of $\mu$ and $\sigma$ and the assumption $\sigma >0$ ensure $\PR_x\!\left(\tau^D = \inf\{t\geq 0 \colon X_t \in \overline{D^c}\}=\tau^{D^\circ}\right)=1$ for each $x\in(\alpha,\beta)$, cf.\ \cite[Chapter V, Lemma (46.1)]{RogersWilliamsVol2}. This yields $\PR_x\!\left(\tau^{D,\Psi} = \tau^D\wedge \tau^\Psi=\tau^{D^\circ}\wedge\tau^\Psi=\tau^{D^\circ,\Psi}\right)=1$ for each $x\in(\alpha,\beta)$.
    \item The function $\psi$ can be regarded as the \textit{rate of randomized stopping} of the stopping times $\tau^\Psi$ and $\tau^{D,\Psi}$ respectively.
    \item For each randomized Markovian time $\tau^{D,\Psi}$ there is an associated partition of $(\alpha, \beta)$ into 
    \begin{itemize}
        \item the \textit{stopping region} $D^c$, 
        \item the \textit{continuation region} $D\cap \{x\in (\alpha,\beta)\colon \psi(x)=0\}$ as well as 
        \item the \textit{set of randomized stopping} or \textit{randomization region} 
        $D\setminus \{x\in (\alpha,\beta)\colon $ $\psi(x)=0\}.$
        \end{itemize}
    \item\label{rem15} If we define $U:=\inf \{t \ge 0\colon  N_t\ge1\}$ as the first jump time of $N$, we have $\tau^\Psi= \inf \{t \ge0\colon  \Psi_t \ge U\}$. Moreover, $U \sim \Exp(1)$ is independent of $X$.

    \item Keep in mind that all $\tau \in \mathcal{S}$ share the same \textit{external source of randomization} $N$ that also defines $U$.
    \item \label{conddistr} 
    The structure of the stopping times $\tau^\Psi$ enables a simple calculation of the corresponding expectation functionals: Let $x\in (\alpha,\beta)$. Given any random variable $Y$ on $(\Omega,\f,\PR_x)$ with values in a measure space $(E, \mathcal{E})$ we denote the image measure of $Y$ on $(E, \mathcal{E})$ by $\PR_x^Y$. Let $\pi_t\colon (\alpha,\beta)^{[0,\infty)}\to (\alpha,\beta)$, $x\mapsto x(t)$ denote the projection on the $t$-coordinate. We regard $X$ as a random variable with values in $((\alpha,\beta)^{[0,\infty)},$ $\bigotimes_{t\in [0,\infty)}\mathcal{B}((\alpha,\beta)^{[0,\infty)}))$, where $\bigotimes_{t\in [0,\infty)}$ denotes the product of sigma algebras of the indexed set and $\mathcal{B}(E)$ the Borel sigma algebra on a topological space $E$.
    
    Let $A:= \{\pi_{t_1}^{-1}(\Gamma_1)\cap\ldots \cap\pi_{t_n}^{-1}(\Gamma_n)\}$, $t,t_1,...,t_n\in [0,\infty)$, $n\in \N$ and $\Gamma_1,...,\Gamma_n\subset (\alpha,\beta)$ be Borel measurable. By the independence of $X,U$ using $U\sim \Exp(1)$
    \begin{align}
        &\Ex_x[\ind{X\in A}\ind{\tau^\Psi\le t}] \notag
        =\Ex_x[\ind{X\in A}\ind{\int_0^t\psi(X_s)ds \ge U}]\notag\\[0.2cm]
        =&\int_{(\alpha,\beta)^{[0,\infty)}} \ind{y\in A} \int_{[0,\infty)}  \ind{\int_0^t\psi(\pi_s(y))ds \ge u} \PR^U(du) \PR_x^X(dy)\notag\\[0.2cm]
        =& \int_{(\alpha,\beta)^{[0,\infty)}} \ind{y\in A} \left(1-e^{-\int_0^t\psi(\pi_s(y))ds}\right) \PR_x^X(dy)\notag\\[0.2cm]
        =&\Ex_x\left[\ind{X\in A} \left(1-e^{-\int_0^t\psi(\pi_s(X))ds}\right)\right]
        = \Ex_x\left[\int_0^t \ind{X\in A}\psi(X_s)e^{-\int_0^s\psi(X_r)dr}ds \right].\notag
    \end{align} Since $\big(\bigotimes_{t\in [0,\infty)}\mathcal{B}((\alpha,\beta)^{[0,\infty)})\big)\otimes \mathcal{B}([0,\infty)) = \sigma\big(\{\pi_{t_1}^{-1}(\Gamma_1)\cap\ldots \cap\pi_{t_n}^{-1}(\Gamma_n)\cap [0,t]:t,t_1,...,t_n\in [0,\infty)$, $n\in \N$, $\Gamma_1,...,\Gamma_n\in \mathcal{B}((\alpha,\beta)^{[0,\infty)})\}\big)$ with $\otimes$ denoting the product sigma algebra, this extends to $$\Ex_x[f(X,\tau^\Psi)]=\Ex_x\left[\int_{[0,\infty)}f(X,s)\psi(X_s) e^{-\int_0^s\psi(X_r)dr}ds\right]$$ for all integrable $f\colon(\alpha,\beta)^{[0,\infty)}\times [0,\infty)\to \R$. In particular $$\Ex_x[f(X,\tau^\Psi)\vert \f^X_\infty]=\int_{[0,\infty)}f(X,s)\psi(X_s)e^{-\int_0^s\psi(X_r)dr}ds.$$
    \item $\tau^{D, \Psi}$ is the first entry time of the right continuous process $(X_t,N_{\Psi_t})_{t \in [0,\infty)}$ into the closed set $(D^c \times [0,\infty)) \cup ( (\alpha,\beta) \times [1,\infty))$ and thus an $\mathbb F^{X,N_{\Psi}}$-stopping time. 
\end{enumerate}

\end{remark}

From Remark \ref{rem1} \eqref{rem15} we infer that $\inf\{t\ge 0\colon \Psi_t\ge \tilde U\}$ with $\tilde U\sim \Exp(1)$ independent of $X$ has the same distribution as $\tau^\Psi$. This raises the question why we use an entire process $N$ as the external source of randomness for the randomized Markovian times instead of a single random variable $\tilde U$. The reason is that in the subsequent analysis we need the randomized Markovian  times $\tau$ to have a Markov property of the form
\begin{align}
    \ind{\tau \ge \sigma}  \tau = \ind{\tau \ge \sigma} (\theta_\sigma \circ \tau + \sigma)\label{shiftprop}
\end{align} in distribution for all $\mathbb F^X$-stopping times $\sigma$ and some shift operator $\theta$. Choosing $\theta$ as a natural shift operator of a strong Markov process has the advantage of circumventing to debate well-definedness.

The problem is that it is not clear how to reasonably regard $\inf\{t\ge 0: \Psi_t\ge \tilde U\}$ as a stopping time with respect to a strong Markov process. Technically it would work to add information about $\tilde U$ to $\mathbb F^X$ at all times but this busts the interpretation of the problem as it provides the agent with the information on which path randomization leads to earlier or later stopping. Thus instead of a canonical shift of a Markov process we would need to artificially extend the shift of $X$ to non-$\mathbb F^X$-measurable functions like $\inf\{t\ge 0: \Psi_t\ge \tilde U\}$.

For fixed open $D\subset (\alpha,\beta)$ and $\psi \in \mathfrak{V}_D$ defining $\Psi \in \mathfrak{W}_D$ via \eqref{defpsi} the strong Markov process $(X,N_\Psi)$ induces a natural shift operator that can either be defined on the space $\mathcal{H}=\mathcal{H}_{D,\Psi}$ of $\f^{X,N_\Psi}_\infty$-measurable functions, cf. \cite[p.\ 115]{oksendal_5th} or directly on the underlying probability space, cf. \cite[Chapter I, Sections 3, 7]{blumenthal_getoor}. In any case $\theta_\sigma \circ \tau$ is well defined as $\sigma$ is an $\mathbb F^{X,N_\Psi}$-stopping time. Going with the shift from \cite[p.\ 115]{oksendal_5th} the stopping time $\tau$ regarded as an element of $\mathcal{H}$ is not in the dense subset of $\mathcal{H}$ where the shift is initially defined, so technically there is the need to calculate how $\theta_\sigma$ extends to $\tau^\Psi$. With the procedure outlined in \cite[p.\ 115]{oksendal_5th} we then obtain
\begin{align*}
    \theta_\sigma \circ X_t &= X_{t+\sigma},\\[0.1cm]
    \theta_\sigma \circ \tau^D &= \inf \{ t\ge 0 \colon \theta_\sigma \circ X_{t}\not \in D\} =\inf \{ t\ge 0 \colon  X_{t+\sigma}\not \in D\},\\[0.1cm]
    \theta_\sigma \circ \tau^\Psi &= \inf \{ t \ge 0\colon  N_{\Psi_{\sigma+t}} \ge 1\}, \\[0.1cm]
    \theta_\sigma \circ \tau^{D, \Psi} &= \theta_\sigma\circ \tau^D \wedge \theta_\sigma\circ \tau^\Psi.
\end{align*}This yields the desired property \eqref{shiftprop}.

Next we introduce a certain way of concatenating randomized Markovian times that will be crucial for the definition of the (weak) equilibrium that is going to follow.

\newpage
\begin{definition}\label{lpo}(local perturbation operator)\\
For $h \ge0$ and randomized Markovian times $\tau^{(1)},\tau^{(2)} \in \mathcal{S}$ we define the local perturbation operator $\diamond$ of $\tau^{(1)}$ with $\tau^{(2)}$ on a radius of $h$ by
\begin{align}    
    \tau^{(1)} \diamond \tau^{(2)}(h) := \ind{\tau^{(2)} \le \tau_h} \tau^{(2)} + \ind{\tau^{(2)} > \tau_h} ( \theta_{\tau_h} \circ \tau^{(1)}+ \tau_h). \notag
\end{align} We set $\mathcal{S}^\diamond:=\{ \tau^{(1)} \diamond \tau^{(2)}(h): \tau^{(1)},\tau^{(2)}\in \mathcal{S}, h\ge 0\}$.
\end{definition}

The name local perturbation operator comes from the fact that $\tau^{(1)} \diamond \tau^{(2)}(h)$ equals $\tau^{(2)}$ up to  time $\tau_h$ where $X_t$ exits $[X_0-h,X_0+h]$ for the first time and then instead goes by the rule of $\tau^{(1)}$.

\begin{remark} \label{t5} Let $D,\tilde D\subset (\alpha,\beta)$ be open, $\Psi\in \mathfrak{W}_D$, $\tilde \Psi\in \mathfrak W_{\tilde D}$ and $h\ge 0$.
\begin{enumerate}[(i)]
    \item \label{t51} For $\tau^{D,\Psi}$,  $\Psi\in \mathfrak{W}_D$ and all $h \ge 0$ by \eqref{shiftprop} we infer 
    \begin{align}
    \tau^{D,\Psi} = \tau^{D,\Psi} \diamond \tau^{D,\Psi}(h). \notag
    \end{align} Thus in particular $\mathcal{S}\subset \mathcal{S}^\diamond$.
    \item Generally $\tau^{D,\Psi}\diamond \tau^{\tilde D, \tilde \Psi}(h)$ is neither an $\mathbb F^{X,N_\Psi} $- nor an $\mathbb F^{X,N_{\tilde \Psi}}$-stopping time, since it draws information from $N_\Psi$ and $N_{\tilde \Psi}$. Due to the path dependence induced by the indicators $\ind{\tau^{\tilde D, \tilde \Psi}\le \tau_h}$, $\ind{\tau^{\tilde D, \tilde \Psi} > \tau_h}$ the local perturbation $\tau^{D,\Psi}\diamond \tau^{\tilde D, \tilde \Psi}(h)$ is not in $\mathcal{S}$ and thus $\mathcal{S}\subsetneq\mathcal{S}^\diamond$.
\end{enumerate}
\end{remark}

\begin{definition}\label{equil} (reward functional) \\
For all almost surely finite $\tau\in \mathcal{S}^\diamond$ we define the reward functional
\begin{align}
    J_\tau(x) := \Ex_x [e^{-r\tau} g(X_\tau)], \,\, x\in (\alpha, \beta) \notag
\end{align} with some fixed discount factor $r \ge 0$ and a measurable payoff function $g \colon (\alpha, \beta)\to[0,\infty)$.
\end{definition}
\begin{definition} \label{equil2}(equilibrium)\\
Let $T\in[0,\infty)$ be a fixed constant. We call a stopping time $\tau^* \in \mathcal{S}(T)$ equilibrium randomized Markovian time for the optimal stopping problem associated to the functional $J$ with expectation constraint $T$, or just equilibrium for brevity, if we have
\begin{align}
    \liminf_{h \searrow 0} \frac{J_{\tau^*}(x)- J_{\tau^* \diamond \tau (h)}(x)}{\Ex_x[\tau_h]} \ge 0 \label{equi}
\end{align} for all $x\in(\alpha, \beta)$ and any $\tau \in \mathcal{S}$ such that there exists some $h>0$ and a neighborhood $U$ of $x$ with $\Ex_y[\tau^* \diamond \tau (h')]\le T$ for all $h'\in[0,h)$ and all $y \in U$.
\end{definition}
Note that the additional condition on the expectation(s) of $\tau^* \diamond \tau (h')$ excludes deviating strategies from consideration that locally violate the constraint.

In order to investigate the fulfillment of the constraint for a given randomized Markovian time and shorten the notation the following definition will be useful.
\begin{definition} (expected time function)\\
Let $\tau^{D, \Psi} \in \mathcal{S}$ be a randomized Markovian time. We call the function 
\begin{align}
    \edp(x):= \Ex_x[\tau^{D,\Psi}]\notag
\end{align} expected time function.
\end{definition}

\section{Equilibrium randomized Markovian times} \label{Ch3}
The aim of this section is to derive some necessary conditions for equilibrium strategies in the sense of Definition \ref{equil2}. Moreover, these conditions will help us to identify \textit{natural} parts of the Verification Theorem \ref{veri} in Section \ref{subsec:verification}.

\subsection{Properties of equilibria}
We now provide  necessary conditions for an equilibrium.

\begin{proposition}\label{propeq}
Let $\tau^{D, \Psi} \in \mathcal{S}(T)$ be a randomized Markovian time and $\psi \in \mathfrak{V}_D$ the function defining $\Psi$ via \eqref{defpsi}. Assume that $g $ is right continuous and has real left limits. Moreover, let $J_{\tau^{D,\Psi}}\vert_D$ be continuous and let $\tau^{D,\Psi}$ be an equilibrium in the sense of Definition \ref{equil2}. We have
\begin{enumerate}[(i)]
    \item $(\mathcal{A}-r)g(x)\le 0$ for all $x \in (D^c)^\circ$ whenever $g\vert_{(D^c)^\circ}\in \ccc^2((D^c)^\circ)$,\label{a}
    \item $g(x) \le J_{\tau^{D,\Psi}}(x)$ for all $x \in(\alpha, \beta),$ \label{b}
    \item \label{d} if $x\in \partial D$ such that there exists some $h >0$ with $ J_{\tau^{D,\Psi}}\vert_{(x-h,x]} \in \ccc^2((x-h,x])$ as well as $ J_{\tau^{D,\Psi}}\vert_{[x,x+h)} \in \ccc^2([x,x+h))$, then $\partial_x J_{\tau^{D,\Psi}}(x+) \le \partial_x J_{\tau^{D,\Psi}}(x-)$,
    \item $(\spt(\psi))^\circ \subset \{x \in (\alpha, \beta)\colon g(x) = J_{\tau^{D,\Psi}}(x) \text { or } e_{D,\Psi}(x)=T \}$, where $\spt(\psi)$ denotes the support of $\psi$. \label{c}
\end{enumerate}
\end{proposition}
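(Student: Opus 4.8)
The plan is to prove the four necessary conditions by testing the equilibrium inequality \eqref{equi} against carefully chosen perturbations $\tau \in \mathcal{S}$ of the form $\tau = \tau^{E,\Theta}$ and analyzing the behavior of the numerator $J_{\tau^*}(x) - J_{\tau^* \diamond \tau(h)}(x)$ as $h \searrow 0$ via an infinitesimal (Dynkin / Itô) expansion. The key mechanism throughout: since $\tau^* \diamond \tau(h)$ agrees with $\tau$ up to the exit time $\tau_h$ of the ball $[x-h, x+h]$ and then switches to $\tau^*$-behaviour, a strong Markov / tower argument gives $J_{\tau^* \diamond \tau(h)}(x) = \Ex_x[\ind{\tau \le \tau_h} e^{-r\tau} g(X_\tau)] + \Ex_x[\ind{\tau > \tau_h} e^{-r\tau_h} J_{\tau^*}(X_{\tau_h})]$, which should be compared to $J_{\tau^*}(x) = \Ex_x[e^{-r\tau_h} J_{\tau^*}(X_{\tau_h})]$ (valid for small $h$ on $D$, using that $\tau^*$ has not yet stopped deterministically inside the ball when $x \in D$, together with the conditional-distribution formula in Remark \ref{rem1}\eqref{conddistr} to handle the randomization $\Psi$). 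Dividing by $\Ex_x[\tau_h] \sim \tfrac{1}{2}\sigma^2(x) h^2 / \ldots$ and letting $h \searrow 0$ turns the difference quotient into a differential expression.

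For \eqref{a}: take $x \in (D^c)^\circ$ and perturb with $\tau = \tau^{E,0}$ for a small ball $E \ni x$ compactly inside $(D^c)^\circ$ (pure continuation near $x$); this is admissible since enlarging the continuation region only decreases the expected time locally, so the constraint stays satisfied near $x$. Here $J_{\tau^*}(x) = g(x)$ (as $x$ lies in the stopping region $D^c$), and the Dynkin expansion of $J_{\tau^* \diamond \tau(h)}(x) = \Ex_x[e^{-r\tau_h} g(X_{\tau_h})]$ yields numerator $\sim -(\mathcal{A} - r)g(x) \cdot \Ex_x[\tau_h] + o(\Ex_x[\tau_h])$, so \eqref{equi} forces $(\mathcal{A}-r)g(x) \le 0$. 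For \eqref{b}: this should follow from \eqref{a}-type reasoning combined with the fact that $\tau^*$ is dominated by comparing $g(X_{\tau^*})$ against the supermartingale property of $e^{-rt}J_{\tau^*}(X_t)$ — more directly, at $x \in D$ one has $J_{\tau^*}(x) \ge g(x)$ by testing against immediate stopping (the perturbation $\tau = \tau^{\emptyset, \cdot}$ or $\tau$ with $D^c$ enlarged to include $x$), which is admissible because shrinking the support of randomization / enlarging the stopping region decreases expected time; on $D^c$ equality $J_{\tau^*}(x)=g(x)$ holds trivially. For \eqref{d}: at a boundary point $x \in \partial D$ with the stated one-sided $\ccc^2$ regularity, test with the perturbation that makes $x$ a stopping point (again admissible, decreases expected time); the numerator expansion near a kink produces, after dividing by $\Ex_x[\tau_h] \asymp h^2$ and extracting the order-$h$ term, the sign condition $\partial_x J_{\tau^*}(x+) - \partial_x J_{\tau^*}(x-) \le 0$ — this is the usual "smooth fit becomes a concavity-type inequality" phenomenon for one-sided equilibria.

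For \eqref{c}: this is the converse-flavoured statement and the main obstacle. Suppose, for contradiction, $x \in (\spt\psi)^\circ$ with $g(x) < J_{\tau^*}(x)$ and $e_{D,\Psi}(x) < T$; by right-continuity of $g$, continuity of $J_{\tau^*}|_D$ and of $e_{D,\Psi}$ (which needs to be argued, or taken from an earlier property), both strict inequalities persist on a neighborhood $U$ of $x$, so the strategy $\tau = \tau^{E,0}$ with $E$ a small ball around $x$ (pure continuation, i.e.\ switching the randomization off near $x$) is admissible near $x$: the constraint is not violated because $e_{D,\Psi} < T$ leaves slack and switching off randomization only changes the expected time continuously/controllably for small $h$. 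One computes the numerator $J_{\tau^*}(x) - J_{\tau^* \diamond \tau(h)}(x)$: using the Remark \ref{rem1}\eqref{conddistr} representation, stopping at rate $\psi(x) > 0$ inside the ball contributes, to leading order in $\Ex_x[\tau_h]$ (note $\Ex_x[\tau_h \wedge \tau^\Psi] \asymp \Ex_x[\tau_h] \asymp h^2$ but the randomized-stopping probability is $\asymp \psi(x)\Ex_x[\tau_h]$), a term proportional to $-\psi(x)\bigl(J_{\tau^*}(x) - g(x)\bigr) \Ex_x[\tau_h] + o(\Ex_x[\tau_h])$ — strictly negative — contradicting \eqref{equi}. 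The delicate points are: (1) verifying admissibility, i.e.\ that $\Ex_y[\tau^* \diamond \tau(h')] \le T$ for all small $h'$ and $y$ near $x$, which rests on lower semicontinuity/continuity of $y \mapsto \Ex_y[\tau^* \diamond \tau(h')]$ and the slack $e_{D,\Psi} < T$; and (2) isolating the $\psi(x)$-term as genuinely leading order, for which one writes $J_{\tau^* \diamond \tau(h)}(x) - J_{\tau^*}(x)$ as $\Ex_x\bigl[\int_0^{\tau_h} \psi(X_s) e^{-\int_0^s \psi(X_r)dr} e^{-rs}\bigl(g(X_s) - \text{(value of continuing)}\bigr)\,ds\bigr]$ plus higher-order remainders, and lets $h \searrow 0$ using right-continuity of $g$ and continuity of $J_{\tau^*}$ on $D$ so the integrand converges to $\psi(x)\bigl(g(x) - J_{\tau^*}(x)\bigr)$; dividing by $\Ex_x[\tau_h]$ and passing to the limit gives the strict negativity.
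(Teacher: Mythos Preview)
Your approach to \eqref{a}, \eqref{b}, and \eqref{c} is essentially the paper's: test against pure continuation in a small ball for \eqref{a}, against immediate stopping $\tau \equiv 0 = \tau^{\varnothing,0}$ for \eqref{b}, and against pure continuation $\tau^R=\tau^{(x-R,x+R),0}$ around $x$ (exploiting the slack $e_{D,\Psi}(x) < T$) for \eqref{c}, where the leading-order contribution $\frac{1}{2}\psi(x-)(g(x-)-J(x)) + \frac{1}{2}\psi(x)(g(x)-J(x))$ emerges from comparing $J_{\tau^*}=J_{\tau^*\diamond\tau^*(h)}$ with $J_{\tau^*\diamond\tau^R(h)}=J_{\theta_{\tau_h}\circ\tau^*+\tau_h}$ via the conditional-distribution formula (packaged in the paper as Lemma~\ref{Kri10}). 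One minor correction on \eqref{a}: your admissibility justification (``enlarging the continuation region only decreases the expected time'') is backwards --- continuing where one previously stopped \emph{increases} the expected time. Admissibility nevertheless holds because for small $h$ one has $\tau^*\diamond\tau(h) = \tau_h$ (after $\tau_h$ the process is still in $(D^c)^\circ$, so $\theta_{\tau_h}\circ\tau^*=0$), and $\Ex_y[\tau_{h}] \to 0$.

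There is, however, a genuine gap in your argument for \eqref{d}. You propose to ``test with the perturbation that makes $x$ a stopping point,'' but $x \in \partial D \subset D^c$ is \emph{already} a stopping point for $\tau^* = \tau^{D,\Psi}$; that perturbation is trivial and the numerator vanishes identically, yielding no information. The correct perturbation is the opposite one: pure continuation $\tau^R = \tau^{(x-R,x+R),0}$ (admissible since $e_{D,\Psi}(x)=0$), which for $h<R$ gives
\[
J_{\tau^*}(x) - J_{\tau^*\diamond\tau^R(h)}(x) \;=\; J_{\tau^*}(x) - \Ex_x\big[e^{-r\tau_h}J_{\tau^*}(X_{\tau_h})\big].
\]
The crucial step --- which your sketch does not identify --- is that because $J_{\tau^*}$ has only one-sided $\ccc^2$ regularity at $x$, one cannot apply Dynkin's formula directly but must use the local-space-time formula of Peskir (It\^o--Tanaka type), producing besides the bounded Dynkin integral an additional term
\[
-\tfrac{1}{2}\big(\partial_x J_{\tau^*}(x+)-\partial_x J_{\tau^*}(x-)\big)\,\Ex_x[L^x_{\tau_h}],
\]
where $L^x$ is the local time of $X$ at $x$. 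The argument is then completed by the nontrivial fact $\Ex_x[L^x_{\tau_h}]/\Ex_x[\tau_h] \to \infty$ as $h\searrow 0$ (cf.\ \cite[Proposition~3.3]{ChristensenLindensjoe_mixedstrategy}): were the jump in the derivative strictly positive, the $\liminf$ in the equilibrium condition would be $-\infty$. Your phrase ``extracting the order-$h$ term'' hints at the right scaling, but both the choice of perturbation and the local-time mechanism that actually drives the conclusion are missing.
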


\begin{remark}
The parts \eqref{a} and \eqref{b} in Proposition \ref{propeq} are standard conditions for optimal stopping times.\ \eqref{a} means that the drift of the stopped process needs to be non-positive in the stopping region and \eqref{b} says that any equilibrium payoff needs to be at least as high as the payoff for stopping immediately. 

\eqref{d} can be regarded as a generalized smooth fit condition as in \cite[Formula (24)]{bayraktar2022equilibria} and  \cite[Condition (IV) in Theorem 8]{BodnariuChristensenLindensjoe2022}. Observe that in \cite{BodnariuChristensenLindensjoe2022} the smooth fit condition holds at the boundary between continuation and randomization region, whereas our generalized smooth fit condition has to be satisfied at the boundary between continuation and stopping region.

\eqref{c} is the most interesting part of this proposition. In other words it reads \textit{in an equilibrium randomization can only be used if we are indifferent between stopping and continuing or to fully exploit the expected time constraint before stopping}.

In Lemma \eqref{Jstetigkeit} in the appendix we derive a simple sufficient condition for the required continuity of $J_{\tau^{D,\Psi}}\vert_D$.
\end{remark}

\begin{proof}[Proof of Proposition \ref{propeq}]
We start with \eqref{a}. Let $x \in (D^c)^\circ$ and $\tau \in \mathcal{S}$ such that the requirements from Definition \ref{equil2} are met. The definition of the perturbation operator yields
    \begin{align}
        \tau^{D,\Psi} \diamond \tau(h) &= \ind{\tau \le \tau_h} \tau + \ind{\tau > \tau_h} ( \theta_{\tau_h}  \circ\tau^{D,\Psi} +\tau_h) = \ind{\tau \le \tau_h} \tau + \ind{\tau > \tau_h} \tau_h= \tau \wedge \tau_h, \label{stefandanke}
    \end{align} $\PR_x$-a.s. for all $h\ge0$ such that $[x-h,x+h]\subset D^c$. Here we took into account that $\PR_x(\theta_{\tau_h} \circ\tau^{D,\Psi}  =0)=1$ for such $h$. With \eqref{stefandanke} we get
\begin{align}
     0 \le &\liminf_{h \searrow 0} \frac{J_{\tau^{D,\Psi}}(x) - J_{\tau^{D,\Psi} \diamond \tau(h)}}{\Ex_x[\tau_h]} = \liminf_{h \searrow 0} \frac{J_{\tau^{D,\Psi}}(x) - J_{\tau \wedge \tau_h}(x)}{\Ex_x[\tau_h]} \notag\\[0.2cm]
        =& \liminf_{h \searrow 0}  \frac{1}{\Ex_x[\tau_h]} \big( g(x) - \Ex_x\big[e^{-r\tau \wedge \tau_h}g(X_{\tau\wedge \tau_h})\big] \big)\notag\\[0.2cm]
        =& \liminf_{h \searrow 0} \frac{\Ex_x[\tau \wedge \tau_h]}{\Ex_x[\tau_h]}\frac{1}{\Ex_x[\tau \wedge \tau_h]}\bigg( -\Ex_x\bigg[\int_0^{\tau \wedge \tau_h}(\mathcal{A}-r) e^{-rs}g(X_s)ds \bigg] \bigg)\notag\\[0.2cm]
        =& (r-\mathcal{A})g(x),\label{glre}
\end{align} where we use that $\lim_{h \to 0}\frac{\Ex_x[\tau \wedge \tau_h]}{\Ex_x[\tau_h]}=1$, cf.\ \cite[Lemma 26]{BodnariuChristensenLindensjoe2022} in the last line.

The proof of \eqref{b} is rather obvious. If \eqref{b} would not hold at some point $x\in (\alpha, \beta)$, choosing $\tau:\equiv0$, which corresponds to $\tau^{\varnothing, 0}$ in our notion of randomized Markovian times, we would end up with a contradiction as follows:
\begin{align*}
   0 \le \liminf_{h \searrow 0} \frac{J_{\tau^{D,\Psi}}(x) - J_{\tau^{D,\Psi} \diamond \tau(h)}}{\Ex_x[\tau_h]} = \liminf_{h \searrow 0} \frac{J_{\tau^{D,\Psi}}(x) - g(x)}{\Ex_x[\tau_h]} = - \infty.
\end{align*}

In order to prove \eqref{c} we show that if 
\begin{align*}
    x\in (\alpha, \beta) \setminus \{x \in (\alpha, \beta)\colon g(x) = J_{\tau^{D,\Psi}}(x) \text{ or } e_{D,\Psi}(x)=T \},
\end{align*}
then $x \in (\alpha, \beta) \setminus (\spt(\psi))^\circ$. First observe that $g = J_{\tau^{D,\Psi}}$ on $ D^c$, hence $x \in D$. $\tau^{D,\Psi}$ being an equilibrium implies $e_{D,\Psi}\le T$ which yields $e_{D,\Psi}(x)<T$. Moreover, from \eqref{b} we read off $g(x)<J_{\tau^{D,\Psi}}(x)$. For $R>0$ such that $\tilde{D}=(x-R,x+R)\subset (\alpha,\beta)$ and $\tilde{\Psi}:\equiv0$ set $\tau^R:=\tau^{\tilde{D},\tilde{\Psi}}$. By Definition \ref{lpo}, the Markov property and Lemma \ref{rightcont} it holds that
\begin{align*}
    \Ex_y[\tau^{D,\Psi} \diamond \tau^R(h)] &= \Ex_y[\ind{\tau^R \le \tau_h}\tau^R] + \Ex_y[\ind{\tau^R > \tau_h} e_{D,\Psi}(X_{\tau_h})] + \Ex_y[\ind{\tau^R > \tau_h} \tau_h]\\[0.1cm]
    &\le \Ex_y[\tau_h] + \Ex_y[\ind{\tau^R > \tau_h} e_{D,\Psi}(X_{\tau_h})] \stackrel{h \searrow 0}{\to} \inda{\tilde{D}}(y) \edp(y)\stackrel{y\to x}{\to}\edp(x)<T.
\end{align*} Thus for $R,h$ sufficiently small we obtain $\Ex_y[\tau^{D,\Psi} \diamond \tau^R(h)]\le T$ for all $y\in(\alpha,\beta)$.  

Hence by Definition \ref{equil2} we have
\begin{align}
\liminf_{h \searrow 0} \frac{J_{\tau^{D,\Psi}}(x)- J_{\tau^{D,\Psi} \diamond \tau^R(h)}(x)}{\Ex_x[\tau_h]} \ge 0. \notag
\end{align}
For $h< R$ it holds that
\begin{align*}
    \tau^{D,\Psi} \diamond \tau^R(h) = \ind{\tau^R \le \tau_h} \tau^R + \ind{\tau^R > \tau_h}(  \theta_{\tau_h}\circ \tau^{D,\Psi} + \tau_h) =   \theta_{\tau_h} \circ \tau^{D,\Psi} + \tau_h, \,\,\,\, \PR_x\ttt{-a.s.}
\end{align*} As $x \in D$ we can apply this together with Lemma \ref{Kri10} and Remark \ref{t5} \eqref{t51}, which leads to 
\begin{align*}
    0 \le&\,\liminf_{h \searrow 0} \frac{J_{\tau^{D,\Psi}}(x)- J_{\tau^{D,\Psi} \diamond \tau^R(h)}(x)}{\Ex_x[\tau_h]} = \liminf_{h \searrow 0} \frac{J_{\tau^{D,\Psi}\diamond \tau^{D,\Psi}(h)}(x)- J_{ \theta_{\tau_h} \circ\tau^{D,\Psi}  + \tau_h}}{\Ex_x[\tau_h]}\\
    =&\, \frac{1}{2}\psi(x-)( g(x-)-J_{\tau^{D,\Psi}}(x) )+\frac{1}{2} \psi(x)(g(x)-J_{\tau^{D,\Psi}}(x)  )\\
    \le&\, \frac{1}{2} \psi(x)(g(x)-J_{\tau^{D,\Psi}}(x)  ),
\end{align*} so $\psi(x)=0$ needs to hold because $g(x)-J_{\tau^{D,\Psi}}(x)< 0$. This means $x \not \in(\spt(\psi))^\circ$, which was the claim of \eqref{c}.

We finish by showing \eqref{d} in a very similar way. Consider $x \in \partial D$ and once again $R>h>0$. First observe that $\edp(x)=0$, so just as before $\Ex_y[\tau^{D,\Psi} \diamond \tau^R(h)]\le T$ for all $y\in(\alpha,\beta)$ and sufficiently small $h$ and $R$. By \eqref{aux2} and the local-space-time formula, cf.\ \cite{Peskir2007}, with $L^x$ denoting the local time of $X$ at $x$, we obtain
\begin{align*}
    &\frac{J_{\tau^{D,\Psi}}(x)- J_{\tau^{D,\Psi} \diamond \tau^R(h)}(x)}{\Ex_x[\tau_h]} 
    =\frac{J_{\tau^{D,\Psi}}(x)- \Ex_{x}[ e^{-r \tau_h} J_{\tau^{D,\Psi}}(X_{\tau_h})] }{\Ex_x[\tau_h]}\\[0.1cm]
    =&-\frac{\Ex_x \left[\int_0^{\tau_h}e^{-rs} (\mathcal{A}-r) J_{\tau^{D,\Psi}} (X_s)ds + \frac{1}{2} \int_0^{\tau_h} \partial_x J_{\tau^{D,\Psi}} (x+) - \partial_x J_{\tau^{D,\Psi}}(x-)dL_s^x \right]}{\Ex_x[\tau_h]}\\[0.1cm]
    =& -\frac{\Ex_x \left[\int_0^{\tau_h} \ind{X_s \neq x} e^{-rs} (\mathcal{A}-r) J_{\tau^{D,\Psi}}(X_s) ds \right]}{\Ex_x[\tau_h]} - \frac{1}{2} (\partial_x J_{\tau^{D,\Psi}} (x+) - \partial_x J_{\tau^{D,\Psi}}(x-))\frac{   \Ex_x[L_{\tau_h}^x ]}{\Ex_x[\tau_h]}.
\end{align*} 
Due to the $C^2$ assumption on $J_{\tau^{D,\Psi}}$ the first summand is bounded for $h \searrow 0$. Note that by \cite[Proposition 3.3]{ChristensenLindensjoe_mixedstrategy} we have $\frac{   \Ex_x[L_{\tau_h}^x ]}{\Ex_x[\tau_h]} \to \infty$ for $h\searrow 0$. Thus, if we assume $\partial_x J_{\tau^{D,\Psi}} (x+) - \partial_x J_{\tau^{D,\Psi}}(x-)>0$ we get
\begin{align*}
   \liminf_{h \searrow 0 }\frac{J_{\tau^{D,\Psi}}(x)- J_{\tau^{D,\Psi} \diamond \tau^R(h)}(x)}{\Ex_x[\tau_h]} = -\infty.
\end{align*} This contradicts the equilibrium condition $\liminf_{h \searrow 0 }\frac{J_{\tau^{D,\Psi}}(x)- J_{\tau^{D,\Psi} \diamond \tau^R(h)}(x)}{\Ex_x[\tau_h]} \ge 0$.
\end{proof}

\subsection{Verification Theorem}\label{subsec:verification}
In this subsection we derive a Verification Theorem that provides sufficient conditions for a candidate randomized Markovian time $\tau^{D,\Psi}\in \mathcal{S}$ to be an equilibrium.  
We start by introducing a notions of regularity for the continuation region $D$ of the candidate $\tau^{D,\Psi}$ and the payoff function $g$.

\begin{definition}\label{regular}
    An open $D\subset (\alpha, \beta)$ is called regular if for every $x\in\partial D$ one of the following conditions holds true:
    \begin{itemize}
    \item $x$ is an isolated point in $D^c$.
    \item There exists $\eps>0$ such that either $(x-\eps,x) \subset D, [x,x+\eps)\subset D^c$ or $(x-\eps,x]\subset D^c, (x,x+\eps)\subset D$.
\end{itemize}
Moreover, we call the payoff function $g\colon (\alpha,\beta) \to [0,\infty)$ regular with respect to $D$ if $g$ is $\ccc^2$ on $(D^c)^\circ$ and $g(x+), g(x-) $ exist in $\R$ for all $x\in D$.
\end{definition}

This notion of regularity of the stopping region can also be found in \cite{bayraktar2022equilibria} and excludes the case from consideration where $D$ consists of infinitely many disjoint intervals that cluster towards some points as this would pose technical difficulties. 

\begin{lemma}\label{Kri16}
Let $\tau^{D, \Psi} \in \mathcal{S}$ be a randomized Markovian time with regular $D$, $\psi\in \mathfrak{V}_D$ the function defining $\Psi$ via \eqref{defpsi} and $x \in D$. Assume that $\psi$ is continuous in $x$, $g(x+),g(x-)\in \R$ exist and that there exists $\eps>0$ such that $J_{\tau^{D,\Psi}}\vert_{(x-\eps,x+\eps)} \in \ccc^2((x-\eps,x+\eps))$. Then we have
\begin{align}
    (\mathcal{A}-r)J_{\tau^{D,\Psi}}(x)= \psi(x)\left(J_{\tau^{D,\Psi}}(x)- \frac{1}{2}(g(x-)+g(x+))\right). \notag
\end{align}
\end{lemma}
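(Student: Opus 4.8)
The plan is to localise $\tau^{D,\Psi}$ at the first exit time $\tau_h$ of $X$ from $[x-h,x+h]$, match the resulting first-step decomposition of $J_{\tau^{D,\Psi}}(x)$ against Dynkin's formula for the $\ccc^2$ function $J_{\tau^{D,\Psi}}$, divide by $\Ex_x[\tau_h]$ and let $h\searrow 0$. Since $x\in D$ and $D$ is open, fix $\eps>0$ with $(x-\eps,x+\eps)\subset D$ and $J_{\tau^{D,\Psi}}\vert_{(x-\eps,x+\eps)}\in\ccc^2$; for $h<\eps$ we then have $[x-h,x+h]\subset D$, hence $\tau_h<\tau^D$ $\PR_x$-a.s.\ and $\tau^{D,\Psi}\wedge\tau_h=\tau^\Psi\wedge\tau_h$. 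Splitting the expectation defining $J_{\tau^{D,\Psi}}(x)$ on $\{\tau^\Psi\le\tau_h\}$ (where $\tau^{D,\Psi}=\tau^\Psi$) and on $\{\tau^\Psi>\tau_h\}$ (where $N_{\Psi_{\tau_h}}=0$, so by the strong Markov property of $(X,N_\Psi)$ and the shift identity \eqref{shiftprop} for $\sigma=\tau_h$ the process restarts afresh from $(X_{\tau_h},0)$), and using $\PR_x(\tau^\Psi=\tau_h)=0$, yields
\[ J_{\tau^{D,\Psi}}(x)=\Ex_x\big[\ind{\tau^\Psi\le\tau_h}e^{-r\tau^\Psi}g(X_{\tau^\Psi})\big]+\Ex_x\big[\ind{\tau^\Psi>\tau_h}e^{-r\tau_h}J_{\tau^{D,\Psi}}(X_{\tau_h})\big]. \]
Subtracting $\Ex_x[e^{-r\tau_h}J_{\tau^{D,\Psi}}(X_{\tau_h})]$ and inserting Dynkin's formula $\Ex_x[e^{-r\tau_h}J_{\tau^{D,\Psi}}(X_{\tau_h})]-J_{\tau^{D,\Psi}}(x)=\Ex_x[\int_0^{\tau_h}e^{-rs}(\mathcal{A}-r)J_{\tau^{D,\Psi}}(X_s)\,ds]$ (legitimate because $J_{\tau^{D,\Psi}}$ is $\ccc^2$ on the compact $[x-h,x+h]$ and $\Ex_x[\tau_h]<\infty$), one obtains
\[ -\Ex_x\Big[\int_0^{\tau_h}e^{-rs}(\mathcal{A}-r)J_{\tau^{D,\Psi}}(X_s)\,ds\Big]=A_h-B_h, \]
where $A_h:=\Ex_x[\ind{\tau^\Psi\le\tau_h}e^{-r\tau^\Psi}g(X_{\tau^\Psi})]$ and $B_h:=\Ex_x[\ind{\tau^\Psi\le\tau_h}e^{-r\tau_h}J_{\tau^{D,\Psi}}(X_{\tau_h})]$.

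Next I would divide by $\Ex_x[\tau_h]$ and pass to the limit. Since $(\mathcal{A}-r)J_{\tau^{D,\Psi}}$ is continuous at $x$, the standard fact $\Ex_x[\tau_h]^{-1}\Ex_x[\int_0^{\tau_h}\phi(X_s)\,ds]\to\phi(x)$ for bounded $\phi$ continuous at $x$ (together with $\Ex_x[\tau_h^2]/\Ex_x[\tau_h]\to0$, which absorbs the factor $e^{-rs}=1+O(\tau_h)$) gives the limit $-(\mathcal{A}-r)J_{\tau^{D,\Psi}}(x)$ on the left. For $A_h$ and $B_h$ I would apply the conditional formula of Remark~\ref{rem1}\,\eqref{conddistr} with $t=\tau_h$, obtaining $A_h=\Ex_x[\int_0^{\tau_h}e^{-rs}g(X_s)\psi(X_s)e^{-\int_0^s\psi(X_r)\,dr}\,ds]$ and $B_h=\Ex_x[e^{-r\tau_h}J_{\tau^{D,\Psi}}(X_{\tau_h})(1-e^{-\int_0^{\tau_h}\psi(X_r)\,dr})]$. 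Using $\tau_h\downarrow0$ $\PR_x$-a.s., continuity of $\psi$ at $x$ (so $\psi(X_s)\to\psi(x)$ uniformly on $[0,\tau_h]$ and $1-e^{-\int_0^{\tau_h}\psi(X_r)\,dr}=\psi(x)\tau_h+o(\tau_h)$ with $o$ uniform), continuity of $J_{\tau^{D,\Psi}}$ at $x$, and boundedness of $g$ near $x$, all error terms disappear after division by $\Ex_x[\tau_h]$, leaving
\[ \lim_{h\searrow0}\frac{A_h-B_h}{\Ex_x[\tau_h]}=\psi(x)\Big(\lim_{h\searrow0}\frac{\Ex_x[\int_0^{\tau_h}g(X_s)\,ds]}{\Ex_x[\tau_h]}-J_{\tau^{D,\Psi}}(x)\Big), \]
once the remaining limit is shown to exist.

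The crux is then $\Ex_x[\tau_h]^{-1}\Ex_x[\int_0^{\tau_h}g(X_s)\,ds]\to\tfrac12\big(g(x-)+g(x+)\big)$. I would split the time integral over $\{X_s>x\}$, $\{X_s<x\}$ and $\{X_s=x\}$; the last vanishes since $\Ex_x[\int_0^\infty\ind{X_s=x}\,ds]=0$. On $\{X_s>x\}$ one has $X_s\in(x,x+h)$, so — $g(x+)$ being finite — $\Ex_x[\int_0^{\tau_h}g(X_s)\ind{X_s>x}\,ds]=(g(x+)+o(1))\,\Ex_x[\int_0^{\tau_h}\ind{X_s>x}\,ds]$, and symmetrically with $g(x-)$ on $\{X_s<x\}$. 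It then remains to invoke the local occupation-time symmetry $\Ex_x[\int_0^{\tau_h}\ind{X_s>x}\,ds]/\Ex_x[\tau_h]\to\tfrac12$ (and likewise for $\{X_s<x\}$), which follows from the explicit form of the Green function of $\mathcal{A}$ on $(x-h,x+h)$ with respect to the speed measure, the scale and speed densities being continuous and strictly positive at $x$ (equivalently, by coupling $X$ near $x$ with a Brownian motion of volatility $\sigma(x)$, for which the symmetry is exact). Combining the three displays gives $-(\mathcal{A}-r)J_{\tau^{D,\Psi}}(x)=\psi(x)\big(\tfrac12(g(x-)+g(x+))-J_{\tau^{D,\Psi}}(x)\big)$, which is the asserted identity.

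The argument is structurally routine; the real work lies in the bookkeeping in the limit $h\searrow0$ — the uniform/dominated estimates (notably $\Ex_x[\tau_h^2]/\Ex_x[\tau_h]\to0$) and, above all, the combination of replacing $g(X_s)$ by its one-sided limits across the jump at $x$ with the occupation-time symmetry $\to\tfrac12$; the regularity of $D$ enters only through $x$ being interior to $D$. (If an auxiliary statement in the spirit of the ``Lemma~\ref{Kri10}'' used in the proof of Proposition~\ref{propeq}\,\eqref{c} is available, it directly supplies the value of $\lim_{h\searrow0}\big(J_{\tau^{D,\Psi}}(x)-J_{\theta_{\tau_h}\circ\tau^{D,\Psi}+\tau_h}(x)\big)/\Ex_x[\tau_h]$, and the proof collapses to the single Dynkin step above.)
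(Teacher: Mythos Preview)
Your proof is correct and follows essentially the same route as the paper: the paper invokes Lemma~\ref{Kri10} with $(\tilde D,\tilde\Psi)=(D,\Psi)$ (using $\tau^{D,\Psi}=\tau^{D,\Psi}\diamond\tau^{D,\Psi}(h)$ and continuity of $\psi$ at $x$) to obtain the limit $\psi(x)\big(J_{\tau^{D,\Psi}}(x)-\tfrac12(g(x-)+g(x+))\big)$, and then matches it against the Dynkin-formula limit $(\mathcal{A}-r)J_{\tau^{D,\Psi}}(x)$ --- exactly the ``collapse'' you anticipate in your final parenthetical. Your $A_h$/$B_h$ computation is precisely the relevant special case of the proof of Lemma~\ref{Kri10}, and the occupation-time symmetry you invoke is the paper's Lemma~\ref{appL1}.
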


\begin{beweis}
By Lemma \ref{Kri10} together with $\tau^{D,\Psi}= \tau^{D,\Psi} \diamond \tau^{D,\Psi}(t)$, cf.\  Remark \ref{t5} \eqref{t51}, for all $t \ge 0$ as well as the continuity of $\psi$ in $x$ we get
\begin{align}
    \lim_{h \searrow 0} \frac{ J_{\theta_{\tau_h} \circ \tau^{D, \Psi} +\tau_h}(x) -J_{\tau^{D,\Psi}} (x) }{\Ex_x[\tau_h]}&= \psi(x)\left(J_{\tau^{D,\Psi}}(x)- \frac{1}{2}\left(g(x-) +g(x+) \right)\right).\label{nun}
\end{align}
Consider $h \in (0, \eps)$ such that $(x-h,x+h) \subset D$. Using \eqref{aux1} followed by Dynkin's formula, cf. \cite[p.\ 133]{dynkin_markov} one obtains
\begin{align*}
    &J_{ \theta_{\tau_h}\circ \tau^{D, \Psi}+\tau_h}(x)
    = J_{\tau^{D,\Psi}}(x)+ \Ex_x\left[\int_0^{\tau_h} e^{-rs} \left( \mathcal{A} -r\right) J_{\tau^{D,\Psi}}(X_s) ds \right].
\end{align*}
With this we are able to make the next estimate using $e^{-z}-1 \ge -z$.
\begin{align*}
    &\left\vert\frac{ J_{ \theta_{\tau_h}\circ\tau^{D, \Psi}  +\tau_h}(x) -J_{\tau^{D,\Psi}} (x) }{\Ex_x[\tau_h]} - ( \mathcal{A} -r) J_{\tau^{D,\Psi}}(x) \right\vert \\[0.2cm]
    \le&\, \frac{ \Ex_x\left[\int_0^{\tau_h}\left\vert e^{-rs} \left( \mathcal{A} -r\right) J_{\tau^{D,\Psi}}(X_s)  -e^{-rs} ( \mathcal{A} -r) J_{\tau^{D,\Psi}}(x)\right\vert ds \right]}{\Ex_x[\tau_h]}   \\[0.2cm]
    &\, +\frac{ \Ex_x\left[\int_0^{\tau_h} \left\vert e^{-rs} ( \mathcal{A} -r) J_{\tau^{D,\Psi}}(x)  - ( \mathcal{A} -r) J_{\tau^{D,\Psi}}(x)\right\vert ds \right]}{\Ex_x[\tau_h]}   \\[0.2cm]
    \allowdisplaybreaks
    \le&\, \frac{ \Ex_x\left[\int_0^{\tau_h}  \sup_{y\in[x-h,x+h]}\left\vert \left( \mathcal{A} -r\right) J_{\tau^{D,\Psi}}(y)  -  ( \mathcal{A} -r) J_{\tau^{D,\Psi}}(x)\right\vert ds \right]}{\Ex_x[\tau_h]}   \\[0.2cm]
    &\, +\frac{ \Ex_x\left[\int_0^{\tau_h} rs  \left\vert ( \mathcal{A} -r) J_{\tau^{D,\Psi}}(x) \right\vert ds \right]}{\Ex_x[\tau_h]}   \\[0.2cm]
    \le&\, \sup_{y\in[x-h,x+h]}\left\vert \left( \mathcal{A} -r\right) J_{\tau^{D,\Psi}}(y)  -  ( \mathcal{A} -r) J_{\tau^{D,\Psi}}(x)\right\vert + r  \left\vert ( \mathcal{A} -r) J_{\tau^{D,\Psi}}(x) \right\vert \frac{\Ex_x[\tau_h^2]}{\Ex_x[\tau_h]} \to 0
\end{align*}
for $h \searrow 0$ by the $\ccc^2$ assumption on $J_{\tau^{D,\Psi}}$ together with $\frac{\Ex_x[\tau_h^2]}{\Ex_x[\tau_h]}\to 0$ for $h\searrow 0$ by \cite[Lemma 25]{BodnariuChristensenLindensjoe2022}. This means
\begin{align*}
     \lim_{h \searrow 0} \frac{ J_{ \theta_{\tau_h}\circ \tau^{D, \Psi} +\tau_h}(x) -J_{\tau^{D,\Psi}} (x) }{\Ex_x[\tau_h]}&= ( \mathcal{A} -r) J_{\tau^{D,\Psi}}(x).
\end{align*} Recalling \eqref{nun} this proves the claim. 
\end{beweis}

\begin{theorem} \label{veri}
Let $\tau^{D, \Psi} \in \mathcal{S}$ be a randomized Markovian time with regular $D$ and $\psi \in \mathfrak{V}_D$ the function defining $\Psi$ via \eqref{defpsi}. Suppose that the function $g$ is regular with respect to $D$. We also assume that $J_{\tau^{D,\Psi}}\vert_D$ is continuous.
If $\tau^{D, \Psi}$ fulfills the following conditions it is an equilibrium randomized Markovian time:
\begin{enumerate}[(i)]
    \item \label{i} $(\mathcal{A}-r)g(x) \le 0$ for all $x\in (D^c)^\circ$. 
    \item \label{ii} $\edp(x)=T$ for all $x \in (\spt(\psi))^\circ$.
    \item \label{iv} For each $x\in \partial D$ there exists some $h >0$ such that $ J_{\tau^{D,\Psi}}\vert_{(x-h,x]} \in \ccc^2((x-h,x])$ as well as $ J_{\tau^{D,\Psi}}\vert_{[x,x+h)} \in \ccc^2([x,x+h))$. Also $\partial_x J_{\tau^{D,\Psi}}(x+) \le \partial_x J_{\tau^{D,\Psi}}(x-)$ holds.
    \item \label{iii} $g(x) \le J_{\tau^{D,\Psi}}(x)$ and $\edp(x) \le T$ for all $x \in D$.
\end{enumerate}

\end{theorem}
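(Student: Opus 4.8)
The plan is to verify the equilibrium inequality \eqref{equi} pointwise. Fix $x\in(\alpha,\beta)$ and a deviation $\tau=\tau^{\hat D,\hat\Psi}\in\mathcal S$ that is admissible at $x$ in the sense of Definition \ref{equil2}, and expand $J_{\tau^{D,\Psi}\diamond\tau(h)}(x)$ to first order in $h$. Since $D$ is open, the sets $(D^c)^\circ$, $\partial D$ and $D$ partition $(\alpha,\beta)$; I would treat them in turn, essentially running the arguments behind Proposition \ref{propeq} in reverse.

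If $x\in(D^c)^\circ$, then $\tau^{D,\Psi}=0$ $\PR_x$-a.s., and for $h$ so small that $[x-h,x+h]\subset(D^c)^\circ$ one has $\tau^{D,\Psi}\diamond\tau(h)=\tau\wedge\tau_h$ as in \eqref{stefandanke}; the computation \eqref{glre} — Dynkin's formula, legitimate because $g\in\ccc^2((D^c)^\circ)$ by the regularity of $g$, together with $\Ex_x[\tau\wedge\tau_h]/\Ex_x[\tau_h]\to1$ — shows that the $\liminf$ in \eqref{equi} equals $(r-\mathcal A)g(x)$, which is $\ge0$ by \eqref{i}. If $x\in\partial D$, then again $\tau^{D,\Psi}=0$ $\PR_x$-a.s., and the regularity of $D$ (Definition \ref{regular}) gives a clean description of the local perturbation near $x$; proceeding as in the proof of Proposition \ref{propeq} \eqref{d}, the $\ccc^2$-pieces of $J_{\tau^{D,\Psi}}$ furnished by \eqref{iv} and the local-space-time formula \cite{Peskir2007} split the difference quotient into a term that stays bounded as $h\searrow0$ and $-\tfrac12\bigl(\partial_xJ_{\tau^{D,\Psi}}(x+)-\partial_xJ_{\tau^{D,\Psi}}(x-)\bigr)\,\Ex_x[L^x_{\tau_h}]/\Ex_x[\tau_h]$; since $\Ex_x[L^x_{\tau_h}]/\Ex_x[\tau_h]\to\infty$ by \cite[Proposition 3.3]{ChristensenLindensjoe_mixedstrategy} and the bracket is $\le0$ by \eqref{iv}, the $\liminf$ is $\ge0$ (it is $+\infty$ unless the bracket vanishes, in which case $x$ has to be handled like an interior point, see the last paragraph).

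The core case is $x\in D$. If $x\notin\hat D$, then $\tau=0$ $\PR_x$-a.s., hence $\tau^{D,\Psi}\diamond\tau(h)=0$ and the quotient in \eqref{equi} equals $(J_{\tau^{D,\Psi}}(x)-g(x))/\Ex_x[\tau_h]\ge0$ by \eqref{iii}. If $x\in\hat D$, then for small $h$ we have $(x-h,x+h)\subset D\cap\hat D$ and
\[
\tau^{D,\Psi}\diamond\tau(h)=\ind{\tau\le\tau_h}\tau+\ind{\tau>\tau_h}\bigl(\theta_{\tau_h}\circ\tau^{D,\Psi}+\tau_h\bigr);
\]
using $\tau^{D,\Psi}=\tau^{D,\Psi}\diamond\tau^{D,\Psi}(h)$ (Remark \ref{t5} \eqref{t51}) and invoking Lemma \ref{Kri10}, Lemma \ref{Kri16} and the assumed continuity of $J_{\tau^{D,\Psi}}\vert_D$, I expect to obtain
\begin{align*}
&\liminf_{h\searrow0}\frac{J_{\tau^{D,\Psi}}(x)-J_{\tau^{D,\Psi}\diamond\tau(h)}(x)}{\Ex_x[\tau_h]}\\
&\qquad\ge\ \tfrac12\bigl(\psi(x-)-\hat\psi(x-)\bigr)\bigl(g(x-)-J_{\tau^{D,\Psi}}(x)\bigr)+\tfrac12\bigl(\psi(x)-\hat\psi(x)\bigr)\bigl(g(x)-J_{\tau^{D,\Psi}}(x)\bigr),
\end{align*}
where $\hat\psi\ge0$ is the rate defining $\tau$. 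By \eqref{iii} and the continuity of $J_{\tau^{D,\Psi}}\vert_D$ we have $g(x)\le J_{\tau^{D,\Psi}}(x)$ and $g(x-)\le J_{\tau^{D,\Psi}}(x-)=J_{\tau^{D,\Psi}}(x)$, so it remains to show $\psi(x)\le\hat\psi(x)$ and $\psi(x-)\le\hat\psi(x-)$. Both are trivial when the left-hand side vanishes; and if, say, $\psi(x)>0$, then by right continuity a right neighbourhood $V$ of $x$ lies in $(\spt\psi)^\circ$, where $\edp\equiv T$ by \eqref{ii} (so in fact $\psi\equiv1/T$ on $(\spt\psi)^\circ$, since $\mathcal A\edp=\psi\edp-1$ on $D$), and $\hat\psi(x)<\psi(x)$ would, via the same first-order expansion applied now to $\edp$, force $\Ex_y[\tau^{D,\Psi}\diamond\tau(h')]>T$ for $y\in V$ near $x$ and small $h'>0$ — contradicting admissibility of $\tau$ at $x$. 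The left-limit statement is symmetric. Hence the right-hand side above is a sum of two products of nonpositive factors, so it is $\ge0$ and \eqref{equi} holds.

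The main obstacle is to make the two first-order expansions — for $J_{\tau^{D,\Psi}}$ and for $\edp$ — rigorous with $o(\Ex_x[\tau_h])$ error terms; this is exactly where Lemmas \ref{rightcont}, \ref{Kri10}, \ref{Kri16} and the continuity hypothesis on $J_{\tau^{D,\Psi}}\vert_D$ are used. Entangled with this is the bookkeeping forced by the fact that $\tau^{D,\Psi}\diamond\tau(h)$ mixes two randomized Markovian times that share the single external clock $N$: on $\{\tau>\tau_h\}$ one must verify that restarting with $\theta_{\tau_h}\circ\tau^{D,\Psi}$ contributes as if the clock were fresh, and that the coefficient of the expansion is governed precisely by the one-sided rate values $\psi(x\pm)$, $\hat\psi(x\pm)$ at $x$. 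A secondary point is the boundary case $x\in\partial D$ when the smooth-fit bracket in \eqref{iv} vanishes: there the local time term drops out and one needs an interior-point-style estimate combining \eqref{i} on the $D^c$-side with Lemma \ref{Kri16} on the $D$-side. The degenerate case $T=0$, which forces $D=\varnothing$, is trivial.
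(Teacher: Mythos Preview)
Your three–case split and the treatment of $x\in(D^c)^\circ$ and $x\in D$ essentially match the paper's proof. In particular, for $x\in D\cap\hat D$ you correctly arrive (via Lemma \ref{Kri10} applied with $\tilde\psi=\psi$ and with $\tilde\psi=\hat\psi$) at the limit $\tfrac12(\hat\psi(x-)-\psi(x-))(J_{\tau^{D,\Psi}}(x)-g(x-))+\tfrac12(\hat\psi(x)-\psi(x))(J_{\tau^{D,\Psi}}(x)-g(x+))$, and the paper then shows $\hat\psi\ge\psi$ exactly as you outline — though not by a ``first-order expansion of $\edp$'' but by a direct comparison: pick $y$ near $x$ where both $\psi,\hat\psi$ are continuous and $\hat\psi<\psi$ on $[y-h,y+h]\subset(\spt\psi)^\circ\cap\hat D$, then compute $\Ex_y[\tau^{D,\Psi}\diamond\tau(h)]$ explicitly and show it strictly exceeds $\Ex_y[\tau^{D,\Psi}]=T$.

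The boundary case $x\in\partial D$ is where your plan deviates from the paper and has a gap. You want to rerun the argument behind Proposition \ref{propeq} \eqref{d}, but that proof uses the \emph{specific} deviation $\tau^R$ satisfying $\tau^R>\tau_h$ $\PR_x$-a.s., so that $J_{\tau^{D,\Psi}\diamond\tau^R(h)}(x)=\Ex_x[e^{-r\tau_h}J_{\tau^{D,\Psi}}(X_{\tau_h})]$ and It\^o--Tanaka applies directly to $J_{\tau^{D,\Psi}}$. For an arbitrary deviation $\tau=\tau^{\hat D,\hat\Psi}$ one only has
\[
J_{\tau^{D,\Psi}\diamond\tau(h)}(x)=\Ex_x\!\big[\ind{\tau\le\tau_h}e^{-r\tau}g(X_\tau)+\ind{\tau>\tau_h}e^{-r\tau_h}J_{\tau^{D,\Psi}}(X_{\tau_h})\big],
\]
which is not of the form $\Ex_x[e^{-r\sigma}J_{\tau^{D,\Psi}}(X_\sigma)]$. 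The paper's fix is to \emph{first} invoke condition \eqref{iii} on the $\{\tau\le\tau_h\}$-piece, obtaining $J_{\tau^{D,\Psi}\diamond\tau(h)}(x)\le\Ex_x[e^{-r(\tau\wedge\tau_h)}J_{\tau^{D,\Psi}}(X_{\tau\wedge\tau_h})]$, and only then apply the local-space-time formula. Your dichotomy ``bracket $<0$: local time dominates; bracket $=0$: interior-type estimate'' is also the wrong decomposition: after the bound above, the local time carried is $L^x_{\tau\wedge\tau_h}$, and there is no reason that $\Ex_x[L^x_{\tau\wedge\tau_h}]/\Ex_x[\tau_h]\to\infty$. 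The paper instead shows that \emph{both} pieces are nonnegative for every $h$: the local time term has the right sign by \eqref{iv}, and the drift integral is split into $(D^c)^\circ$, where $(\mathcal A-r)J_{\tau^{D,\Psi}}=(\mathcal A-r)g\le0$ by \eqref{i}, and $D$, where by Lemma \ref{Kri16} it equals $\psi(J_{\tau^{D,\Psi}}-\tfrac12(g(\cdot-)+g(\cdot+)))$. The latter vanishes near $x$ because condition \eqref{ii} together with the continuity of $\edp$ (Lemma \ref{rightcont}) forces $\psi\equiv0$ in a neighbourhood of any $x\in\partial D$ --- a point you do not use but which is essential.
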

\begin{remark}
\begin{itemize}
    \item The Verification Theorem \ref{veri} acts as a counterpart to Proposition \ref{propeq} to characterize all equilibria with regular $D$, continuous $J_{\tau^{D,\Psi}}\vert_D$ such that for all $x\in \partial D$ there is an $h>0$ with $J_{\tau^{D,\Psi}}\vert_{(x-h,x]}\in \ccc^2((x-h,x])$ and $J_{\tau^{D,\Psi}}\vert_{[x,x+h)}\in \ccc^2([x,x+h))$ as well as right continuous $g$ regular with respect to $D$ up to the case that $g(x)=J_{\tau^{D,\Psi}}(x)$ somewhere in $(\spt(\psi))^\circ$.

    \item $g\le J_{\tau^{D,\psi}}$ and $\edp\le T$ actually hold globally on $(\alpha,\beta)$, because $g(x) \le J_{\tau^{D,\Psi}}(x)$ is trivially fulfilled for $x \in D^c$ and $\edp\vert_{D^c} \equiv 0$.
    \item A sufficient condition for the continuity of $J_{\tau^{D,\Psi}}\vert_D$ is derived in Lemma \ref{Jstetigkeit}.
\end{itemize}

\end{remark}

\begin{proof}[Proof of the Verification Theorem \ref{veri}]
We have to show \eqref{equi} for all $x\in(\alpha,\beta)$. Let $x\in (\alpha,\beta)$, $\tau \in \mathcal{S}$, $V\subset (\alpha,\beta)$ a neighborhood of $x$ and $r>0$ such that $\Ex_z[\tau^{D, \Psi} \diamond \tau(r')]\le T$ for all $z\in V$ and all $r'\in[0,r)$. 

$\boxed{\text{Case 1:}}$ For $x \in (D^c)^\circ$ we can proceed in almost exactly the same way as in the proof of Proposition \ref{propeq} part \eqref{a}. Instead of starting the calculation \eqref{glre} with the premise $0 \le \liminf_{h \searrow 0} \frac{J_{\tau^{D,\Psi}}(x) - J_{\tau^{D,\Psi} \diamond \tau(h)}(x)}{\Ex_x[\tau_h]}$ we have the assumption $(r-\mathcal{A})g(x) \ge 0$ to finish the calculation with and thus infer the claim. 
    
$\boxed{\text{Case 2:}}$ Consider $x\in D$. Assume $\tau = \tau^{\Tilde{D},\Tilde{\Psi}}$ for some open $\Tilde{D}\subset(\alpha,\beta)$ and $\Tilde{\Psi} \in \mathfrak{W}_{\tilde{D}}$ defined via \eqref{defpsi} with a function $\tilde{\psi}\in \mathfrak{V}_{\tilde{D}}$. If $x \not\in \Tilde{D}$, we have $J_{\tau^{D,\Psi} \diamond \tau^{\Tilde{D},\Tilde{\Psi}}(h)}(x) = g(x),$ $\PR_x$-a.s. Thus we can conclude \eqref{equi} from \eqref{ii}. Now we are left with the case $x \in \Tilde{D}$.
    First we show 
    \begin{align}
        \Tilde{\psi}(x) \ge \psi(x) \label{aux3}
    \end{align}
    for all $x\in D $. If $\psi(x)=0$ this is clear because $\tilde{\psi}$ is non-negative by assumption. If $\psi(x)>0$ we argue by contradiction, so assume $\Tilde{\psi}(x) < \psi(x)$. By right continuity and the fact that $\psi, \tilde{\psi}$ have finitely many jumps there is some $y\in (\spt(\psi))^\circ \cap \Tilde{D} \cap V$ and $h\in(0,r)$ such that $\psi, \tilde{\psi}$ are continuous on $[y-h,y+h]\subset (\spt(\psi))^\circ \cap \Tilde{D}$ and $ \tilde{\psi}< \psi$ on $[y-h,y+h]$. 
    For $U$ defined as in Remark \ref{rem1} \eqref{rem15} this implies 
    \begin{align}
        \PR_y(\tau^{\tilde{D},\tilde{\Psi}} \le \tau_h < \tau^{D,\Psi})=\PR_y\bigg(\int_0^{\tau_h}\psi(X_s)ds< U \le \int_0^{\tau_h}\Tilde{\psi}(X_s)ds  \bigg) =0.\label{a1}
    \end{align} Similarly, additionally using $\tau_h \le \tau^D \wedge \tau^{\tilde{D}},$ $\PR_y$-a.s., we derive
    \begin{align}
        \tau^{D,\Psi} \wedge \tau_h 
        &= \inf \left\{ t \ge 0 \colon  \int_0^{t\wedge\tau_h}\psi(X_s) ds \ge U \right\} \wedge \tau_h \notag\\
        &\le \inf \left\{ t \ge 0 \colon  \int_0^{t\wedge\tau_h}\Tilde{\psi}(X_s) ds \ge U \right\} \wedge \tau_h 
        = \tau^{\tilde{D}, \Tilde{\Psi}} \wedge \tau_h, \quad \PR_y\text{-a.s.} \label{zus1}
    \end{align}
    Next we prove $\Ex_y[\tau^{D,\Psi} \diamond \tau^{\Tilde{D},\Tilde{\Psi}} (h)]>\Ex_y[\tau^{D,\Psi}]=T $ which contradicts the assumption on $\tau^{\Tilde{D},\Tilde{\Psi}}= \tau$ we made in the beginning of this proof. Applying \eqref{a1} in the penultimate and \eqref{zus1} in the last step we obtain
    \begin{align*}
        &\Ex_y[\tau^{D,\Psi} \diamond \tau^{\Tilde{D},\Tilde{\Psi}} (h)]= \Ex_y[\ind{\tau^{\Tilde{D},\Tilde{\Psi}} \le \tau_h}\tau^{\Tilde{D},\Tilde{\Psi}} + \ind{\tau^{\Tilde{D},\Tilde{\Psi}}>\tau_h}(\theta_{\tau_h} \circ\tau^{D,\Psi}  + \tau_h)]\\[0.1cm]
        =& \,\Ex_y[\tau^{D,\Psi}] + \Ex_y[\ind{\tau^{D,\Psi}\le \tau_h} \ind{\tau^{\Tilde{D},\Tilde{\Psi}} \le \tau_h}(\tau^{\Tilde{D},\Tilde{\Psi}}-\tau^{D,\Psi}) ] \\[0.1cm]
        &\,+ \Ex_y[\ind{\tau^{D,\Psi}\le \tau_h}\ind{\tau^{\Tilde{D},\Tilde{\Psi}}>\tau_h}(\theta_{\tau_h} \circ \tau^{D,\Psi} + \tau_h-\tau^{D,\Psi})] \\[0.1cm]
        &\, +\Ex_y[\ind{\tau^{D,\Psi} > \tau_h}\ind{\tau^{\Tilde{D},\Tilde{\Psi}} \le \tau_h}  (\tau^{\Tilde{D},\Tilde{\Psi}}-  \tau^{D,\Psi} )]\\[0.1cm]
        \ge&\, T + \Ex_y[\ind{\tau^{D,\Psi}\le \tau_h} \ind{\tau^{\Tilde{D},\Tilde{\Psi}} \le \tau_h}(\tau^{\Tilde{D},\Tilde{\Psi}}-\tau^{D,\Psi}) ] 
        + \Ex_y[\ind{\tau^{D,\Psi}\le \tau_h}\ind{\tau^{\Tilde{D},\Tilde{\Psi}}>\tau_h} (\tau_h-\tau^{D,\Psi})] \\
        >&\, T.
    \end{align*} 
    This shows \eqref{aux3}. Now we verify the equilibrium condition \eqref{equi}. If $x\in \tilde{D}$ we apply Lemma \ref{Kri10} and Remark \ref{t5} \eqref{t51} followed by \eqref{aux3} together with condition \eqref{iii} from this theorem and obtain
    \begin{align*}
        &\liminf_{h \searrow 0} \frac{J_{\tau^{D,\Psi}}(x) - J_{\tau^{D,\Psi} \diamond \tau^{\tilde{D},\tilde{\Psi}}(h)}(x)}{\Ex_x[\tau_h]} \\
        = & \,\liminf_{h \searrow 0} \frac{J_{\tau^{D,\Psi}}(x) - J_{\theta_{\tau_h}\circ\tau^{D,\Psi} + \tau_h }(x)}{\Ex_x[\tau_h]} + \frac{J_{\theta_{\tau_h}\circ \tau^{D,\Psi}+ \tau_h }(x) - J_{\tau^{D,\Psi} \diamond \tau^{\tilde{D},\tilde{\Psi}}(h)}(x)}{\Ex_x[\tau_h]}\\
        =& \,\frac{1}{2}(( \tilde{\psi}(x-)- \psi(x-) )(J_{\tau^{D,\Psi}}(x-)-g(x-)) + ( \tilde{\psi}(x) -\psi(x) )(J_{\tau^{D,\Psi}}(x+)-g(x+)))\\
        \ge&\, 0.
    \end{align*}
    If $x \not \in \tilde{D}$ we conclude that
    \begin{align*}
        &\liminf_{h \searrow 0} \frac{J_{\tau^{D,\Psi}}(x) - J_{\tau^{D,\Psi} \diamond \tau^{\tilde{D},\tilde{\Psi}}(h)}(x)}{\Ex_x[\tau_h]} =  \liminf_{h \searrow 0} \frac{J_{\tau^{D,\Psi}}(x) - g(x)}{\Ex_x[\tau_h]} \ge 0
    \end{align*} due to condition \eqref{iii}.
    
    $\boxed{\text{Case 3:}}$ Consider $x \in \partial D$. Once again let $\tau = \tau^{\Tilde{D},\Tilde{\Psi}}$ for some open $\Tilde{D}\subset(\alpha,\beta)$ and $\Tilde{\Psi} \in \mathfrak{W}_D$. Clearly $e_{D,\Psi}(x) = \Ex_x[\tau^{D,\Psi}]=0$. Thus by \eqref{ii} and the continuity of $e_{D,\Psi}$ (recall Lemma \ref{rightcont}) the support of $\psi$  has no interior points near $x$. By right continuity of $\psi$ there is some $\delta>0$ such that $\psi \vert_{(x-\delta,x+\delta)}\equiv 0$.

    Using \eqref{aux2} in the first step, condition \eqref{iii} in the second one followed by a variation of Itô's formula (cf.\ \cite{Peskir2007}) in the third as well as condition \eqref{iv}  and Lemma \ref{Kri16} (this can be applied for $h$ so small that $J_{\tau^{D,\Psi}}\vert_{[x,x+h)},J_{\tau^{D,\Psi}}\vert_{(x-h,x]}\in \ccc^2$) in the fourth we obtain
    \begin{align}
        \liminf_{h \searrow 0}& \frac{J_{\tau^{D,\Psi}}(x) - J_{\tau^{D,\Psi} \diamond \tau^{\tilde{D},\tilde{\Psi}}(h)}(x)}{\Ex_x[\tau_h]} \notag\\
        = \liminf_{h \searrow 0}& \frac{g(x) - \Ex_{x}\big[ \ind{\tau^{\tilde{D},\tilde{\Psi}} \le \tau_h } e^{-r \tau^{\tilde{D},\tilde{\Psi}}} g(X_{\tau^{\tilde{D},\tilde{\Psi}}}) + \ind{\tau^{\tilde{D},\tilde{\Psi}} > \tau_h}e^{-r \tau_h} J_{\tau^{D,\Psi}}(X_{\tau_h})\big] }{\Ex_x[\tau_h]}\notag\\
        \ge \liminf_{h \searrow 0}& \frac{g(x) - \Ex_{x}[  e^{-r \tau^{\tilde{D},\tilde{\Psi}}\wedge \tau_h} J_{\tau^{D,\Psi}}(X_{\tau^{\tilde{D},\tilde{\Psi}}\wedge \tau_h})  ]}{\Ex_x[\tau_h]}\notag\\
        =  \liminf_{h \searrow 0}& \frac{g(x) - \Ex_{x}\left[ J_{\tau^{D,\Psi}}(x) +  \int_0^{\tau^{\tilde{D},\tilde{\Psi}}\wedge \tau_h} (\mathcal{A}-r)  e^{-rs} J_{\tau^{D,\Psi}}(X_{s}) ds \right]}{\Ex_x[\tau_h]}\notag\\
        -&\frac{\frac{1}{2}\Ex_x \left[\int_0^{\tau^{\tilde{D},\tilde{\Psi}}\wedge \tau_h} \partial_x J_{\tau^{D,\Psi}}(x+)-  \partial_x J_{\tau^{D,\Psi}}(x-) dL_s^x\right]}{\Ex_x[\tau_h]}\notag\\
        \ge \liminf_{h \searrow 0}& \frac{\Ex_{x}\left[ \int_0^{\tau^{\tilde{D},\tilde{\Psi}}\wedge \tau_h} \ind{X_s \in (D^c)^\circ} e^{-rs}(r-\mathcal{A})  g(X_{s}) ds\right]}{\Ex_x[\tau_h]}\notag\\
        +\liminf_{h \searrow 0}& \frac{\Ex_{x}\left[ \int_0^{\tau^{\tilde{D},\tilde{\Psi}}\wedge \tau_h} -\ind{X_s \in D} e^{-rs} \psi(X_s)(J_{\tau^{D,\Psi}}(X_s)-\frac{1}{2}(g(X_s-)+g(X_s+)))  ds\right]}{\Ex_x[\tau_h]}. \notag
    \end{align}
    The first summand in the last line is non-negative by \eqref{i}. 
    Recall that there exists $\delta >0$ such that  $\psi \vert_{(x-\delta,x+\delta)}\equiv 0$. This makes the integrand of the second summand in the last line vanish for $h$ close to 0. Together this gives the claim.
\end{proof}

\begin{remark}
From the first part of Case 3 in the proof of the Verification Theorem \ref{veri} we read off that any randomized Markovian time in $\mathcal{S}(T)$ that satisfies condition \eqref{ii} of Theorem \ref{veri} has zero rate of randomized stopping in a neighborhood of the stopping region because there is still enough time to continue until we actually reach the stopping region. 
\end{remark}

\section{Finding equilibria} \label{Ch4}
We derive further properties of (equilibrium) randomized Markovian times and  provide a guess-and-verify approach for finding equilibria.

\begin{proposition}\label{fixpsi}
    Let $\tau^{D, \Psi} \in \mathcal{S}(T)$ be a randomized Markovian time and $\psi \in \mathfrak{V}_D$ the function defining $\Psi$ via \eqref{defpsi}. 
    \begin{enumerate}[(i)]
    \item \label{fixpsi1} For all open $U\subset D$ such that $\psi\vert_U \equiv 0$ we have $\edp\vert_U \in \ccc^2(U)$ and
    \begin{align*}
        \mathcal{A}\edp (x) = -1, \qquad x\in U.
    \end{align*}
    \item \label{fixpsi2} Suppose condition \eqref{ii} from the Verification Theorem \ref{veri} holds for $\tau^{D,\Psi}$, i.e.\ $\edp(x)=T$ for all $x \in (\spt(\psi))^\circ$.
    \begin{enumerate}[a)]
        \item \label{fixpsi2a} For all $x \in (\alpha, \beta)$ we have
            \begin{align}
                \psi(x) \in \left\{0, \frac{1}{T}\right\}.\notag
            \end{align}
        \item\label{above}  Let $U_1,U_2,...\subset (\alpha, \beta)$ be the pairwise disjoint open, connected components of $D$, i.e.\ $D= \bigcupdot_{n\in \N} U_n$ (note that $U_n = \varnothing$ is allowed). For each $n \in \N$ we either have $U_n\cap\{\psi= \frac{1}{T}\} = \varnothing$ or 
        \begin{enumerate}[1.]
            \item $U_n\cap\{\psi= \frac{1}{T}\} = (\alpha,\beta)$ if $\inf U_n =\alpha$, $\sup U_n = \beta$,
            \item $U_n\cap\{\psi= \frac{1}{T}\} = (\alpha, b_n)$ for some $b_n\in(\alpha,\beta)$ if $\inf U_n = \alpha$, $\sup U_n \in(\alpha, \beta)$,
            \item $U_n\cap\{\psi= \frac{1}{T}\} = [a_n,\beta)$ for some $a_n\in(\alpha,\beta)$ if $\inf U_n \in(\alpha,\beta)$, $\sup U_n = \beta$,
            \item $U_n\cap\{\psi= \frac{1}{T}\} = [a_n,b_n)$ for some $a_n<b_n\in(\alpha,\beta)$ if $\inf U_n,\sup U_n \in(\alpha, \beta)$.
        \end{enumerate}
        \item \label{fixpsi4} $\edp'(a_n-)=0$, $\edp'(b_n+)=0$ in the cases 2, 3 and 4 from \eqref{above}.
    \end{enumerate}
\end{enumerate}
\end{proposition}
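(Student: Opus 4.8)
The plan is to dispatch the four assertions in turn, with part (i) furnishing the local ODE that the others exploit.

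\emph{Part (i).} Fix $x\in U$ and an interval $(c,d)$ with $x\in(c,d)$ and $[c,d]\subset U$. Since $\psi$ vanishes on $(c,d)$, the clock $\Psi$ does not grow before $X$ leaves $(c,d)$, so $\tau^{D,\Psi}\ge\tau^{(c,d)}$ $\PR_x$-a.s.; the strong Markov property of $(X,N_\Psi)$ at $\tau^{(c,d)}$ together with \eqref{shiftprop} gives $\edp(x)=\Ex_x[\tau^{(c,d)}]+\Ex_x[\edp(X_{\tau^{(c,d)}})]$. Here $\Ex_x[\edp(X_{\tau^{(c,d)}})]=\edp(c)\tfrac{s(d)-s(x)}{s(d)-s(c)}+\edp(d)\tfrac{s(x)-s(c)}{s(d)-s(c)}$ with $s$ a scale function of $X$, which is $\ccc^2$ and $\mathcal A$-harmonic in $x$, while $\Ex_\cdot[\tau^{(c,d)}]$ is the classical expected exit-time function, i.e.\ $\ccc^2$ on $(c,d)$ with $\mathcal A(\cdot)=-1$ and zero boundary data. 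As $(c,d)$ was arbitrary, $\edp|_U\in\ccc^2(U)$ with $\mathcal A\edp=-1$; all expectations are finite since $\tau^{D,\Psi}\in\mathcal S(T)$.

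\emph{Part (ii)(a).} We may assume $T>0$ (else $\edp\equiv0$, forcing $D=\varnothing$ and $\psi\equiv0$). First, $(\spt\psi)^\circ\subset D$, because a point of $(\spt\psi)^\circ\cap\partial D$ would satisfy $\edp=0$ and $\edp=T$ at once. Let $x\in(\spt\psi)^\circ$ be a continuity point of $\psi$ and $h>0$ small with $[x-h,x+h]\subset(\spt\psi)^\circ$. Conditioning on $\f^X_\infty$ and using that the first jump time $U\sim\Exp(1)$ is independent of $X$ (Remark \ref{rem1}), with $\Psi_s=\int_0^s\psi(X_r)\,\dd r$,
\begin{align*}
\edp(x)=\Ex_x\Big[\int_0^{\tau_h}e^{-\Psi_s}\,\dd s\Big]+\Ex_x\big[e^{-\Psi_{\tau_h}}\edp(X_{\tau_h})\big].
\end{align*}
Since $\edp\equiv T$ on $[x-h,x+h]$ and $1-e^{-\Psi_{\tau_h}}=\int_0^{\tau_h}\psi(X_s)e^{-\Psi_s}\,\dd s$, this rearranges to $\Ex_x\big[\int_0^{\tau_h}(T\psi(X_s)-1)e^{-\Psi_s}\,\dd s\big]=0$. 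Dividing by $\Ex_x[\tau_h]$ and letting $h\searrow0$ --- using $\sup_{s<\tau_h}|X_s-x|\le h$, continuity of $\psi$ at $x$, and $\Ex_x[\tau_h^2]/\Ex_x[\tau_h]\to0$ \cite[Lemma 25]{BodnariuChristensenLindensjoe2022} --- the left-hand side tends to $T\psi(x)-1$, whence $\psi(x)=1/T$. Right-continuity of $\psi$ and finiteness of its jump set extend this to $\psi\equiv1/T$ on $(\spt\psi)^\circ$; and if $\psi(y)>0$ for some $y$, right-continuity puts a right neighbourhood of $y$ into $(\spt\psi)^\circ$, so $\psi(y)=\psi(y+)=1/T$. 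Hence $\psi\in\{0,1/T\}$ everywhere.

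\emph{Part (ii)(b).} By (a), $\{\psi=1/T\}=\{\psi\neq0\}$, and by right-continuity with finitely many jumps this is a finite union of intervals each of the form $[a,b)$, with $a$, resp.\ $b$, possibly equal to $\alpha$, resp.\ $\beta$. Fix a component $U_n$ of $D$. At most one such interval meets $U_n$: if two did, there would be a gap $(c,c')\subset U_n$ on which $\psi\equiv0$, so $\mathcal A\edp=-1$ on $(c,c')$ by (i) while $\edp(c)=\edp(c')=T$ by continuity of $\edp$ (Lemma \ref{rightcont}); as $\mathcal A$ carries no zeroth-order term, its minimum principle forces $\edp\ge T$ on $[c,c']$, hence $\edp\equiv T$ there since $\edp\le T$, contradicting $\mathcal A\edp=-1$. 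The interval meets no finite endpoint of $U_n$: there $\edp=0$, while $\edp=T$ arbitrarily close inside, against continuity. It does meet an endpoint of $U_n$ equal to $\alpha$ or $\beta$: if, say, $\inf U_n=\alpha$ and the interval were $[a_n,b_n)$ with $a_n>\alpha$, then from $x\in(\alpha,a_n)$ the process, being unable to reach $\alpha$ and facing $\psi\equiv0$ on $(\alpha,a_n)$, cannot be stopped before hitting $a_n$, so $\edp(x)\ge\Ex_x[\tau^{(\alpha,a_n)}]\to\infty$ as $x\to\alpha$ (by the behaviour of $X$ at the inaccessible boundary $\alpha$), contradicting $\edp\le T$; symmetrically for $\beta$. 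Running through the four combinations of $\inf U_n,\sup U_n$ being $\alpha$/$\beta$ or interior yields cases 1--4.

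\emph{Part (ii)(c).} Put $v:=\edp$. Since $U\sim\Exp(1)$ is independent of $X$, $\Ex_x[\tau^{D,\Psi}\mid\f^X_\infty]=\int_0^{\tau^D}e^{-\Psi_s}\,\dd s$, so $M_t:=\int_0^t e^{-\Psi_s}\,\dd s+e^{-\Psi_t}v(X_t)$ coincides on $\{t<\tau^D\}$ with the martingale $\Ex_x[\int_0^{\tau^D}e^{-\Psi_s}\,\dd s\mid\f^X_t]$ (integrable, its mean being $v(x)\le T$). Take $h>0$ so small that $[a_n-h,a_n+h]\subset D$, $\psi\equiv0$ on $(a_n-h,a_n)$ and $\psi\equiv1/T$ on $(a_n,a_n+h)$, and let $\tau_h$ be the exit time of $(a_n-h,a_n+h)$ under $\PR_{a_n}$; then $M_{\cdot\wedge\tau_h}$ is a bounded martingale. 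On $(a_n-h,a_n)$ one has $v\in\ccc^2$ with $\mathcal A v=-1$ by (i), and on $(a_n,a_n+h)$ one has $v\equiv T$, so $\mathcal A v=0$; in both cases $(\mathcal A-\psi)v=-1$, which is exactly what makes the absolutely continuous drift of $M_{\cdot\wedge\tau_h}$ vanish. By the Itô-Meyer formula only a local-time term at $a_n$ survives, with factor $\tfrac12(v'(a_n+)-v'(a_n-))$; a continuous martingale has no non-trivial finite-variation part and $\Ex_{a_n}[L^{a_n}_{\tau_h}]>0$, so $v'(a_n+)=v'(a_n-)$, and since $v\equiv T$ right of $a_n$ gives $v'(a_n+)=0$ we get $\edp'(a_n-)=0$; the symmetric argument at $b_n$ gives $\edp'(b_n+)=0$. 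The genuinely delicate point above is the last assertion of (ii)(b) --- the single-interval and finite-endpoint statements are soft, via the minimum principle and continuity of $\edp$, but forcing the randomization region to reach a boundary $\alpha$ or $\beta$ relies on the blow-up of $\Ex_x[\tau^{(\alpha,a_n)}]$, i.e.\ on the standing assumptions on $X$ near that boundary; and in (ii)(c) one must take care to identify the correct martingale and verify the drift cancellation across the discontinuity of $\psi$.
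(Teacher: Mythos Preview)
Your argument is largely correct and, in parts, more streamlined than the paper's. Two points deserve comment.

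\textbf{A small gap in (ii)(b).} When $\inf U_n=\alpha$ you argue that $\Ex_x[\tau^{(\alpha,a_n)}]\to\infty$ as $x\to\alpha$; you flag this yourself as relying on ``the standing assumptions on $X$ near that boundary''. The paper makes no such assumption: $\alpha$ could in principle be an entrance boundary, in which case the expected hitting time of $a_n$ from the left stays bounded. The blow-up is in fact unnecessary. Since $\psi\equiv0$ on $(\alpha,a_n)\subset U_n\subset D$, no stopping occurs there, so by the strong Markov property $\edp(x)=\Ex_x[\tau^{(\alpha,a_n)}]+\edp(a_n)=\Ex_x[\tau^{(\alpha,a_n)}]+T>T$ for every $x\in(\alpha,a_n)$, already contradicting $\edp\le T$. (This is how the paper handles it.) Your minimum-principle argument for connectedness is a nice alternative to the paper's direct midpoint computation.

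\textbf{A genuinely different route in (ii)(c).} The paper argues by contradiction: it rules out $\edp'(a_n-)<0$ trivially and, for $\edp'(a_n-)>0$, builds an explicit auxiliary function $H(y)=\Ex_y[\tau^{(L,a_n+\delta)}]+T\,\PR_y(X_{\tau^{(L,a_n+\delta)}}=a_n+\delta)$, shows $H\ge\edp$, computes $H'$ via the scale function, and derives $T=H(a_n+\delta)>H(a_n)\ge T$. Your martingale/local-time approach is cleaner: identifying $M_t=\int_0^t e^{-\Psi_s}\,ds+e^{-\Psi_t}\edp(X_t)$ as a bounded $\mathbb F^X$-martingale up to $\tau_h$, checking that the absolutely continuous drift cancels on both sides of $a_n$ (since $(\mathcal A-\psi)\edp=-1$ there), and concluding that the surviving local-time term $\tfrac12(\edp'(a_n+)-\edp'(a_n-))\int_0^{\cdot}e^{-\Psi_s}\,dL^{a_n}_s$ must vanish because a continuous finite-variation martingale is constant. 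This avoids the explicit construction and yields the smooth-pasting directly; the paper's approach, in exchange, avoids any appeal to the Itô--Tanaka formula for functions with a kink.
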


\begin{beweis}
    By \cite[Theorem 13.16]{dynkin_markov2} the function
    \begin{align*}
        f_\varphi(x):= \Ex_x\left[\int_0^{\tau^U} e^{-\int_0^t \psi(X_s)ds} dt\right]+ \Ex_x\left[e^{-\int_0^{\tau^U} \psi(X_s) ds} \varphi(X_{\tau^U})\right]
    \end{align*} 
    is the unique $\ccc^2$-solution to $\mathcal{A}f-\psi f=-1$ on $U$ that satisfies the boundary condition $\lim_{y\to z}f(y) = \varphi(z)$ for all $z\in \partial U \subset (\alpha,\beta)$ whenever $\psi\vert_U$ is Hölder-continuous. Moreover, by \cite[Theorem 13.11]{dynkin_markov2} and \cite[Theorem 13.12]{dynkin_markov2}, if $\psi\vert_U$ is merely continuous, the function $f_\varphi$ is a continuous solution to $\overline{\mathcal{A}}f-\psi f =-1$ on $U$, with $\overline{\mathcal{A}}$ denoting the characteristic operator in the sense of \cite[Chapter 5, \S 3]{dynkin_markov}.\\
    
    By the Markov property, Remark \ref{rem1} \eqref{conddistr} and the fact that $\Ex_{x,n}[\tau^{D, \Psi}] = 0$ for all $n\in \N$ we get
    \begin{align*}
        \edp(x) &= \Ex_x[\tau^{D,\Psi}] = \Ex_x[\tau^U\wedge \tau^{D,\Psi}]+ \Ex_x[\theta_{ \tau^U \wedge \tau^{D, \Psi}} \circ \tau^{D, \Psi}]\\[0.1cm]
        &= \Ex_x\left[ \Ex_x\left[ \int^{\tau^U}_0 \ind{t < \tau^\Psi} dt \bigg\vert \f_\infty^X \right]\right] + \Ex_x\left[ \Ex_{x,0}\left[ \theta_{ \tau^U \wedge \tau^{D, \Psi}} \circ \tau^{D, \Psi} \vert \f_{ \tau^U \wedge \tau^{D, \Psi}} \right]  \right]\\[0.1cm]
        &=\Ex_x\left[\int^{\tau^U}_0 \Ex_x\left[  \ind{t < \tau^\Psi}  \big\vert \f_\infty^X \right]dt\right] + \Ex_x\left[ \Ex_{X_{\tau^U \wedge \tau^{D, \Psi}},N_{\Psi_{\tau^U \wedge \tau^{D, \Psi}}}}[ \tau^{D, \Psi}] \right] \\[0.1cm]
        &=\Ex_x\left[ \int_0^{\tau^U} e^{-\Psi_t} dt  \right] + \Ex_x \left[ \ind{\tau^U < \tau^\Psi} \Ex_{X_{\tau^U}}[\tau^{D, \Psi}] \right] \\[0.1cm]
        &= \Ex_x\left[ \int_0^{\tau^U} e^{-\int_0^t\psi(X_s)ds} dt \right]+ \Ex_x\left[e^{-\int_0^{\tau^U}\psi(X_s)ds}\edp(X_{\tau^U})\right] =  f_{\edp}(x).
    \end{align*}
    Together with the fact that  $e_{D,\Psi}$ is continuous by Lemma \ref{rightcont} \eqref{rightcont_1} this already implies \eqref{fixpsi1}.
     Assume that there is an open neighborhood $U\subset (\spt(\psi))^\circ$ of $x$ such that $\psi\vert_U$ is continuous. Since $ f_{\edp}\vert_U=\edp\vert_U\equiv T$ on $U$ we have
    \begin{align}\label{eq:on U}
        0= \overline{\mathcal{A}}f_{\edp} = \psi f_{\edp} - 1=\psi T-1 \qquad \text{on $U$.} 
    \end{align}
    Now let $x\in(\spt(\psi))^\circ$ be arbitrary. Since $\psi\in \mathfrak{V}_D$ has only finitely may discontinuities there is some open $U\subset (\spt(\psi))^\circ$ such that $x= \inf U$ and $\psi$ is continuous on $U$. Now $\psi\vert_U\equiv \frac{1}{T}$ by \eqref{eq:on U}. Right continuity of $\psi$ yields $\psi(x)=\frac{1}{T}$. Lastly the boundary points of $\spt(\psi)$ are either $0$ or $\frac{1}{T}$ by right continuity of $\psi$, which shows \eqref{fixpsi2a}.
    
For the proof of \eqref{above} first note that $U_n\cap\{\psi= \frac{1}{T}\}$ is connected. Indeed, if not, by \eqref{fixpsi2a} and the right continuity of $\psi$ there is a non-empty interval $[x,y)$ and some $\delta>0$ such that $(x-\delta,y+\delta)\subset U_n$, $\psi\vert_{[x,y)} \equiv 0$ and $\psi\vert_{(x-\delta,x)} \equiv \frac{1}{T}$ or $\psi\vert_{[y,y+\delta)} \equiv \frac{1}{T}$. The assumption of condition \eqref{ii} from the Verification Theorem and continuity of $\edp$ (cf.\ Lemma \ref{rightcont}) imply $\edp(x)=T=\edp(y)$. With that we reach a contradiction to $\tau^{D, \Psi} \in \mathcal{S}(T)$ via
\begin{align*}
    \edp\left(\frac{x+y}{2}\right) = \Ex_{\frac{x+y}{2}}[\tau^{(x,y)}] + \Ex_{X_{\tau^{(x,y)}}}[\tau^{D, \Psi}]=\Ex_{\frac{x+y}{2}}[\tau^{(x,y)}] +T > T.
\end{align*}
Hence, $U_n\cap\{\psi= \frac{1}{T}\}$ is connected. 
If $\alpha <\inf U_n<\sup U_n < \beta$ this suffices to show \eqref{above} by right continuity of $\psi$. On the other hand if $\inf U_n =\alpha$  and $\psi\vert_{(\alpha,\alpha+\delta)} \equiv 0 $ or $\sup U_n=\beta$ and $\psi\vert_{(\beta-\delta,\beta)} \equiv 0 $ for some $\delta>0$, respectively, then just as before we get $\edp(\alpha+\frac{\delta}{2})>T$ or $\edp(\beta - \frac{\delta}{2})>T$ respectively to reach a contradiction, which proves \eqref{above}.

Without loss of generality we show \eqref{fixpsi4} only for $a_n$. We will argue by contradiction, so set $L:= \inf U_n$, $a:=a_n$ and suppose $\edp'(a-)\neq 0$. We have already shown $\edp(a)=T$. If $\edp'(a-) < 0$, then $\edp>T$ on the left of $a$ which contradicts the assumption $\tau^{D, \Psi} \in \mathcal{S}(T)$. Thus we are left to treat the case $\edp'(a-)>0$. Let $\delta>0$ such that $a+\delta \in (\spt(\psi))^\circ$ and let $f\in \ccc^2([L,a+\delta])$ with $\mathcal{A}f \equiv 1$. Moreover, we denote the scale function of $X$ by $s$. With that, using Dynkin's formula for $y\in[L,a]$ we obtain
\begin{align*}
    \edp(y) &= \Ex_y[\tau^{(L,a)}] + \PR_y(X_{\tau^{(L,a)}}=a) T = \Ex_y[ f(X_{\tau^{(L,a)}})]-f(y) +\frac{s(y)-s(L)}{s(a)-s(L)}T\\
    &= \frac{s(a)-s(y)}{s(a)-s(L)} f(L) + \frac{s(y)-s(L)}{s(a)-s(L)}f(a)-f(y) + \frac{s(y)-s(L)}{s(a)-s(L)} T.
\end{align*} 
Note that the right-hand side of this equation is well defined and differentiable for $y\in[L,a+\delta]$ since $f$ and $s$ are. For $(z,y)\in[L,a+\delta]^2$ we define
\begin{align*}
    h(z,y):=&\,\frac{\partial}{\partial y} \left(  \frac{s(z)-s(y)}{s(z)-s(L)} f(L) + \frac{s(y)-s(L)}{s(z)-s(L)}f(z) -f(y) + \frac{s(y)-s(L)}{s(z)-s(L)} T \right) \\[0.15cm]
    =&\, -\frac{s'(y)f(L)}{s(z)-s(L)} + \frac{s'(y)f(z)}{s(z)-s(L)}- f'(y) + \frac{s'(y) T}{s(z)-s(L)}.
\end{align*}
Now we introduce an auxiliary function $H\colon [L,a+\delta] \to \R$ by
\begin{align*}
    y \mapsto \,&\Ex_y[\tau^{(L,a+\delta)}] + \PR_y (X_{\tau^{(L,a+\delta)}} = a + \delta ) T \\[0.15cm]
    &= \frac{s(a+\delta)-s(y)}{s(a+\delta)-s(L)} f(L) + \frac{s(y)-s(L)}{s(a+\delta)-s(L)}f(a+\delta)-f(y) + \frac{s(y)-s(L)}{s(a+\delta)-s(L)} T.
\end{align*} 
By definition
\begin{align}
    \edp(y) &= \Ex_y[\tau^{(L,a+\delta)} \wedge \tau^\Psi] + \Ex_y[ \ind{\tau^\Psi > \tau^{(L,a+\delta)}} \ind{X_{\tau^{(L,a+\delta)}}=a+\delta}] \Ex_{a+\delta}[\tau^{D,\Psi}] \notag\\
    &\le \Ex_y[\tau^{(L,a+\delta)}] + \PR_y(X_{\tau^{(L,a+\delta)}}=a+\delta) T = H(y) \label{hgl}
\end{align} for all $y\in[L,a+\delta]$. $H'$ is given by
\begin{align*}
    H'(y)= -\frac{s'(y)f(L)}{s(a+\delta)-s(L)} + \frac{s'(y)f(a+\delta)}{s(a+\delta)-s(L)}- f'(y) + \frac{s'(y) T}{s(a+\delta)-s(L)} = h(a+\delta,y).
\end{align*}  
Since $s'$ is continuous,  $h$ is continuous in $(a,a)$ with $h(a,a) = \edp'(a-)>0$. Thus for sufficiently small $\delta>0$ we find that $H'\vert_{[a,a+\delta]}>0$. Using \eqref{hgl} this yields a contradiction via $T=H(a+\delta) > H(a) \ge \edp(a) = T$, which proves \eqref{fixpsi4}.
\end{beweis}

\begin{remark}\label{scheme}
   Proposition \ref{fixpsi} gives us a method to construct an equilibrium $\tau^{D,\Psi}$ with connected $D$. By \eqref{above} a candidate for $\psi \not \equiv 0$ that satisfies \eqref{ii} from the Verification Theorem has the form $\frac{1}{T}\inda{[a,b)}$, $\frac{1}{T}\inda{(\alpha,b)}$, $\frac{1}{T}\inda{[a,\beta)}$ or $\frac{1}{T}\inda{(\alpha,\beta)}$ for some $\alpha<a<b<\beta$. By \eqref{fixpsi1}, \eqref{above} and \eqref{fixpsi4} we find that $a$ and $b$ depend on the boundaries of $D$ via
    \begin{align}
        &\mathcal{A}\edp=-1 \quad \text{on }\,(\inf D, a)\notag\\
        &\edp(\inf D)=0,\quad \edp(a)=T,\quad \edp'(a)=0 \label{sys1}
    \end{align} if $\inf D \in (\alpha,\beta)$ and
     \begin{align}
        &\mathcal{A}\edp=-1 \quad \text{on }\,(b, \sup D)\notag\\
        &\edp(\sup D)=0,\quad \edp(b)=T,\quad \edp'(b)=0\label{sys2}
    \end{align} if $\sup D \in (\alpha,\beta)$ respectively, while $a = \alpha$ if $\inf D = \alpha$ and $b = \beta$ if $\sup D = \beta$. Given $D$ basic ODE theory does not provide existence nor uniqueness of solutions to the boundary value problems \eqref{sys1} and \eqref{sys2}. Nevertheless, we have three equations and one free variable, namely $a$ or $b$, it seems reasonable to believe that the systems \eqref{sys1} and \eqref{sys2} have exactly one solution. In that case, similar to classical optimal stopping we rely on the smooth fit principle from Theorem \ref{veri} \eqref{iv} in order to determine the boundary of $D$ of the equilibrium randomized Markovian time.
\end{remark}

\begin{remark}
The scheme described in Remark \ref{scheme} can even be employed in situations where  $D$ is not an interval by applying it to the connected components of $D$, see Example \ref{ex:D_kein_Intervall}.
\end{remark}

\section{Examples} \label{Ch5}

The aim 
of this section is to give some examples for the scheme proposed in Remark \ref{scheme}. We start with the most simple case.
\begin{remark}\label{lambda_const}
If $g$ continuous with at most polynomial growth and $(e^{-rt}g(X_t))_{t\in [0, \infty)}$ is a submartingale, then $\tau^*=\tau^{D,\Psi}$ with $D=(\alpha,\beta)$ and $\psi\equiv \frac{1}{T}$ defining $\Psi$ via \eqref{defpsi} is an equilibrium in the sense of Definition \ref{equil2}. \\
Here in view of  Remark \ref{rem1} \eqref{rem15} condition \eqref{ii} from the Verification Theorem is easily seen to hold via
\begin{align*}
    \Ex_x[\tau^*]= \Ex_x[\tau^{D,\Psi}]=\Ex_x[\tau^{\Psi}]= \Ex_x\left[\inf\left\{ t \ge 0 \colon  \frac{1}{T}t\ge U\right\}\right] = \Ex_x[T\cdot U] = T.
\end{align*} 
Since $(\alpha,\beta)=\spt(\psi)$ it only remains to show $g(x)\le J_{\tau^*}(x)$ from condition \eqref{iii} in Theorem \ref{veri}. As $(e^{-rt}g(X_t))_{t\in [0, \infty)}$ is a submartingale we apply the optional sampling theorem to obtain 
\begin{align*}
    J_{\tau^*}(x) = \Ex_x[e^{-r\tau^*}g(X_{\tau^*})]\ge g(x).
\end{align*}
Therefore the Verification Theorem \ref{veri} yields the claim.
\end{remark}

The next example deals with a special case of Remark \ref{lambda_const} and shows that considering randomized Markovian times instead of only first exit times of sets that are regular in the sense of Definition \ref{regular} as admissible strategies leads to a truly more general concept of equilibrium.
\begin{ex}\label{nofe}
    Let $\alpha:= -\infty$, $\beta:= \infty$, $X=W=(W_t)_{t\in [0,\infty)}$ a standard Brownian motion, $g(x):=x^2$ and $r=0$. In the corresponding equilibrium problem there are no equilibria $\tau^D=\tau^{D,\Psi}\in \mathcal{S}(T)$ with $\Psi \equiv 0$ and regular $D$. Indeed, suppose $\tau^D$ were such an equilibrium. Here $(\mathcal{A}-r)g=2>0$, so by Proposition \ref{propeq} \eqref{a} it holds that $(D^c)^\circ = \varnothing$, in particular $D\neq \varnothing$. Now let $x\in D$. Since $\Ex_y[\tau^D]\le T$ for all $y\in \R$ the sets $D^c\cap (-\infty,x)$ and $D^c\cap (x,\infty)$ are non empty. Based on that we set $x_l:=\sup (D^c\cap (-\infty,x))$ and $x_r:=\inf (D^c\cap (x,\infty))$. As $D$ is open we infer $x_l<x<x_r$. Now $J_{\tau^D}(y)=x_r^2\frac{y-x_l}{x_r-x_l}+x_l^2 \frac{x_r-y}{x_r-x_l}$ for $y\in [x_l,x_r]$. Using that $J_{\tau^D} \ge g $ by Proposition \ref{propeq} \eqref{b} and $ J_{\tau^D}(x_r)=g(x_r)$ we infer
    \begin{align*}
        \partial_xJ_{\tau^D}(x_r-)
        &= x_l+x_r<2x_r 
        = \partial_x g(x_r+) 
        =\lim_{h\searrow 0} \frac{g(x_r+h)-g(x_r)}{h}\\
        &\le \lim_{h\searrow 0} \frac{J_{\tau^D}(x_r+h)- J_{\tau^D}(x_r)}{h}
        = \partial_x J_{\tau^D}(x_r+),
    \end{align*} which contradicts Proposition \ref{propeq} \eqref{d}.
\end{ex}

\begin{ex}\label{ex:BM_Betrag}
We now consider a Brownian motion $(W_t)_{t\in [0,\infty)}$ and the payoff function $g(x)=|x|$. Using the guess-and-verify approach from Remark \ref{scheme} we derive an equilibrium whose structure depends on the discount factor $r$ and if $r>0$ also on the size of $r\, T$. 
	
First we focus on $\boxed{r=0}$. In this case the process $g(W_t)=|W_t|$, $t\in [0,\infty)$, is a strict submartingale under every $\PR_x$.  
By Remark \ref{lambda_const} an equilibrium is given by $\tau^*=\tau^{D,\Psi}$, where $D=\R$ and $\psi\equiv\frac 1T$ defining $\Psi$ via \eqref{defpsi}.
The corresponding $J_{\tau^*}$  is given by 
    \begin{align*}
	    J_{\tau^*}(x)=\Ex_x[|W_{\tau^*}|]
     =|x|+\sqrt{\frac T 2} \exp\left(-\sqrt{\frac{2}{T}}\,|x|\right)\!.
	\end{align*}
		
	For a positive discount factor $\boxed{r>0}$ we first consider the unconstrained optimal stopping problem 
\begin{align*}
	\sup_{\tau\in\mathcal{T}}\Ex_x[e^{-r\tau}|W_\tau|], 
\end{align*}
	where $\mathcal{T}$ denotes the set of all almost surely finite stopping times. With the usual methods (for more details we refer to  \cite{PeskirShiryaev2006}) one can show that the optimal stopping time is given by $\tau^{\tilde{D}}$, where $\tilde{D}=(-\tilde{x},\tilde{x}$) with $\tilde{x}=\tilde{x}(r)=\frac{\tilde{z}}{\sqrt{2r}}$ and $\tilde{z}$ is the unique positive solution of 
		\begin{align}\label{ex:eq_for_z}
		z=\frac{\cosh(z)}{\sinh(z)}. 
		\end{align}
	Note that $\tilde{z}\approx 1.19967864$.
	%
	Moreover, we have 
	\begin{align}
	\sup_{\tau\in\mathcal{T}} \Ex_x[e^{-r\tau}|W_\tau|]=\begin{cases}
		\begin{aligned}
		&\frac{\tilde{x}\cosh(\sqrt{2r}\, x)}{\cosh(\sqrt{2r}\,\tilde{x})},\qquad && |x|<\tilde{x}, \\[0.2cm]
		&|x|, &&|x|\geq\tilde{x}.
	\end{aligned}
	\end{cases}\label{ex_absolute_value_discount}
	\end{align}
	%
	In particular, it holds that
	\begin{align*}
	\Ex_x\big[\tau^{\tilde{D}}\big]=\begin{cases}
		\begin{aligned}
		&\frac{\tilde{z}^2}{2r}-x^2, \qquad &&x \in \tilde{D},\\[0.2cm]
		& 0, && x\notin\tilde{D}. 
		\end{aligned}
		 \end{cases}
	\end{align*}
	Hence, 	$\Ex_x\big[\tau^{\tilde{D}}\big]\leq T$ for all $x\in \R$ if and only if $r\,T\geq \frac{\tilde{z}^2}{2}\approx 0.7196144195$. 
	
	Now we come back to the optimal stopping problem with expectation constraint. If $\boxed{r\,T\geq \frac{\tilde{z}^2}{2}}$ then the optimal stopping time $\tau^{\tilde{D}}$ of the unconstrained problem is admissible, i.e.\ $ \Ex_x\left[\tau^{\tilde{D}}\right]\leq T$ for all $x\in \R$. Let 
	\begin{align*}
		D^*=\tilde{D}=(-\tilde{x}, \tilde{x}), \qquad \psi\equiv 0
	\end{align*} 
and $\tau^*=\tau^{D^*}$. The corresponding $J_{\tau^*}$ is given by \eqref{ex_absolute_value_discount}. Then all conditions of Theorem~\ref{veri} are satisfied and $\tau^*$ is an equilibrium strategy. 
 
If $\boxed{r\,T< \frac{\tilde{z}^2}{2}}$ then the optimal strategy of the unconstrained optimal stopping problem does not fulfill the expectation constraint for all $x\in \R$ and, thus, is not admissible. 

To find an equilibrium  we follow Remark \ref{scheme} and assume that the strategy $\tau$ stops immediately under $\PR_x$ if $|x|$ is greater or equal to  $b$, continues if $|x|$ is sufficiently large but not greater than $b$ and stops with rate $\psi(x)=\frac 1T$ for small $|x|$. More precisely, we consider $\tau^{D^b, \Psi^a}$ with  $D^b=(-b,b)$ and $\psi^a=\frac{1}{T}\mathds{1}_{[-a,a)}$  defining $\Psi^a$ via \eqref{defpsi} for  $0<a<b$. Here $a$ and $b$ have to be chosen in such a way that the expectation constraint $\Ex_x[\tau^{D^b, \Psi^a}]\leq T $ holds true for all $x\in \R$.
Observe that \eqref{sys1} and \eqref{sys2} imply that 
\begin{align*}
    \frac 12 e_{D^b,\Psi^a}''&=-1\qquad\qquad  \text{on } (-b,-a)\cup(a,b),\\[0.1cm]
    e_{D^b,\Psi^a}(-b)&= e_{D^b,\Psi^a}(b)=0,\\[0.1cm]
      e_{D^b,\Psi^a}(-a)&= e_{D^b,\Psi^a}(a)=T, \\[0.1cm]
     e_{D^b,\Psi^a}'(-a)&= e_{D,\Psi^a}'(a)=0,
\end{align*}
which yields $a=b-\sqrt{T}$. In particular, $b>\sqrt{T}$. To simplify notation we set $\tau^b=\tau^{D^b, \Psi^{b-\sqrt{T}}}$ and $a=b-\sqrt{T}$.

Now we derive $J_{\tau^{b}}$. For $|x|\geq b$ we have $J_{\tau^b}(x)=|x|$. Moreover,  the symmetry of the Brownian motion $W$, of the payoff function $g$ and of the randomized Markovian time $\tau^b$  yield $J_{\tau^b}(x)=J_{\tau^b}(-x)$.  
By \cite[Theorem 5.9]{dynkin_markov} it holds that
\begin{itemize}
     \item $J_{\tau^{b}}\vert_{(-a,a)}\in\ccc^2((-a,a))$,
\item $J_{\tau^b}\vert_{(-b,-a)}\in\ccc^2((-b,-a))$,
\item  $J_{\tau^b}\vert_{(a,b)}\in\ccc^2((a,b))$.
\end{itemize} 
Lemma \ref{Kri16} implies that $J_{\tau^b} $ satisfies 
\begin{align}
\begin{split}
    \frac{1}{2} J_{\tau^{b}}''(x)-\left(r+\frac1T\right)J_{\tau^{b}}(x) &= -\frac{1}{T}|x|,\quad\ \ x\in(-a,a),\\[0.2cm]
    \frac{1}{2}J_{\tau^b}''(x)-r J_{\tau^b}(x)&= 0,\qquad \qquad  x\in (-b,-a) \cup (a,b).    
\end{split}\label{bsp:DGL}
\end{align}
The differential equations \eqref{bsp:DGL} together with  $J_{\tau^b}(x)=J_{\tau^b}(-x)$ result in
\begin{align*}
    J_{\tau^b}(x)=\begin{cases}
    \begin{aligned}
        &\xi\cosh\left(\gamma|x|\right)+\frac{|x|}{rT+1}+\frac{1}{\gamma(rT+1)}e^{-\gamma|x|}, &&|x|<a=b-\sqrt{T},\\[0.2cm]
        &\lambda e^{\sqrt{2r}|x|}+\nu e^{-\sqrt{2r}|x|}, \qquad &&|x|\in(a,b)=(b-\sqrt{T}, b)     
        \end{aligned}
        \end{cases}
    \end{align*}
    for some  $ \xi=\xi(T,r,b),$ $\lambda=\lambda(T,r,b)$, $\nu=\nu(T,r,b)$ and with $\gamma=\sqrt{2\left(r+\frac1T\right)}$.
    Since the function $J_{\tau^b}$ is continuous on $[-b,b]$ by Lemma \ref{Jstetigkeit} and even $J_{\tau^b}\vert_{(-b,b)}\in \ccc^1((-b,b))$ by direct computations we obtain boundary conditions to determine $\xi$, $\lambda$ and $\nu$ which depend on $b$, but are unique for fixed $r,T$ and $b$.
 
We now derive $b$ such that $\partial_xJ_{\tau^b}(b-)=\partial_xJ_{\tau^b}(b+)=1$ which guarantees that the second part of Condition (iii) in Theorem \ref{veri} is satisfied. 
A straight forward calculation shows that $\partial_xJ_{\tau^b}(b-)=1$ holds if and only if  $b$ is a zero of the function $\ell\colon [\sqrt{T}, \infty)\to \R$ with  
\begin{align*}
    \ell(b)&=
\gamma\sinh(\gamma(b-\sqrt{T}))\left[b\cosh(\sqrt{2rT})-\frac{1}{\sqrt{2r}}\sinh(\sqrt{2rT})-\frac{b-\sqrt{T}}{rT+1}-\frac{e^{-\gamma(b-\sqrt{T})}}{\gamma(rT+1)}\right]\\
&\quad  +\cosh(\gamma(b-\sqrt{T}))\left[ \sqrt{2r} b \sinh(\sqrt{2rT})-\cosh(\sqrt{2rT})+\frac{1-e^{-\gamma(b-\sqrt{T})}}{rT+1}\right].
\end{align*}
One can show that $\ell$ is strictly increasing with $\lim_{b\to\infty} \ell(b)=\infty$ and 
\begin{align*}
    \ell(\sqrt{T})=\sqrt{2rT}\sinh(\sqrt{2rT})-\cosh(\sqrt{2rT})<0, 
    \end{align*}
where we use for the inequality that  $\sqrt{2rT}< \tilde{z}$ and $y\sinh(y)-\cosh(y)<0$ for all $y\in [0,\tilde{z})$.
Hence, there exists a unique  $b^*=b^*(T,r)\in (\sqrt{T}, \infty)$ with $\ell(b^*)=0$ which implies that $\partial_xJ_{\tau^{b^*}}(b^*-)=1$.

In addition, we have 
\begin{align*}
	J_{\tau^{b^*}}(x)=
	\begin{cases}
		\begin{aligned}
			& \xi(T,r, b^*)\, \cosh\left(\gamma |x|\right) +\frac{|x|}{rT+1}+\frac{1}{\gamma(rT+1)}e^{-\gamma|x|}, \\
   &\hspace*{9cm}|x|\leq b^*-\sqrt{T},\\[0.5cm]
			&\frac{1}{2}\left(b^*+\frac{1}{\sqrt{2r}}\right)\,e^{-\sqrt{2r}\,(b^*-|x|)}+\frac{1}{2}\left(b^*-\frac{1}{\sqrt{2r}}\right)\,e^{\sqrt{2r}\,(b^*-|x|)},\\
    &\hspace*{9cm} |x|\in (b^*-\sqrt{T}, b^*),\\[0.5cm]
			&|x|, \hspace{8.35cm} |x|\geq b^*, 
		\end{aligned}
	\end{cases}
\end{align*}
where
\begin{align*}
\xi(T,r, b^*)=\frac{1}{\cosh(\gamma(b^*-\sqrt{T}))}&\left(
b^*\cosh(\sqrt{2rT})-\frac{1}{\sqrt{2r}}\sinh(\sqrt{2rT})\right.\\
&\qquad\left.-\frac{1}{rT+1}(b^*-\sqrt{T}+\frac{1}{\gamma}e^{-\gamma(b^*-\sqrt{T})}\right).
\end{align*}
Now one can check that $\tau^{b^*}$ is indeed an equilibrium using  Theorem \ref{veri}.

See Figure \ref{fig:betrag_r=0} for the function $J_{\tau^*}$  for different $T>0$ with $r=0$ and Figure \ref{fig:J_betrag_small_T_pos_r} for  $J_{\tau^{b^*}}$ for  $r=0.01$ and different $T>0$.

{\begin{figure}[h]
\begin{center}
\hspace*{-0.57cm}\includegraphics[width=0.54\textwidth]{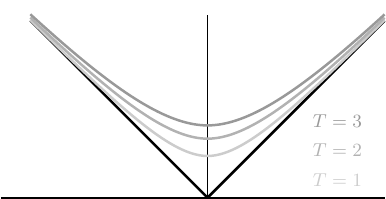}
\end{center}
\caption{The function $g(x)=|x|$ in black and the function $J_{\tau^*}$ for different $T>0$ and $r=0$. }
\label{fig:betrag_r=0}
   {\begin{center}
    \includegraphics[scale=1.1]{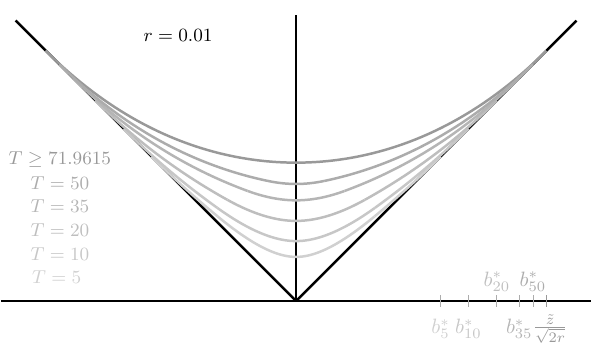}
    \end{center}}
    \caption{The function $J_{\tau^{*}}$  for the payoff function $g(x)=|x|$ (black), discount factor $r=0.01$ and $T\in\{5,10,20,35,50\}$ as well as $T\geq \frac{ \tilde{z}^2}{2r}\approx71.9615$. Here we use $b_T^*$ for $b^*=b^*(T,r)$.}    \label{fig:J_betrag_small_T_pos_r}
\end{figure}}
\end{ex}

\begin{remark}
    The most common formulation of the constrained optimal stopping problem for the Brownian motion $W$ and the payoff function $g(x)=|x|$ is
    \begin{align}
        \text{maximize } \Ex_x[|W_\tau|] \qquad \text{subject to $\tau\in \widetilde{\mathcal{S}}(x,T)$}, \label{eq_comment_precommitment}
    \end{align}
     for given $x\in \R$, where $\widetilde{\mathcal{S}}(x,T)$ denotes the set of all stopping times with respect to the augmented natural filtration of the Brownian motion satisfying  $\Ex_x[\tau]\leq T$. A major difference to our equilibrium approach is that the constraint only applies for the fixed starting value $x$ but not uniformly on the entire state space.
     
     An optimal stopping time in \eqref{eq_comment_precommitment} for fixed $x$ and $T$ is given by $\tau^{D(x,T)}$ with $D(x,T)=(-\sqrt{T+x^2}, \sqrt{T+x^2})$ and the value of \eqref{eq_comment_precommitment} is $\sqrt{T+x^2}$, see \cite[Example 4.5]{AKK}. In particular, for any $y$ with $|y|< |x|$ the stopping time $\tau^{D(x,T)}$ does not satisfy the expectation constraint $\Ex_y\big[\tau^{D(x,T)}\big]\leq T$ that would be required in the equilibrium problem. 
    
    Observe that in \eqref{eq_comment_precommitment} we do not allow for an external source of randomization. 
    
    The precommitment approach presented in \cite{AKK, miller2017} does not cover the discounted optimal stopping problem  
      \begin{align}\label{eq_comment_precommitment_discount}
        \text{maximize } \Ex_x[e^{-r\tau}|W_\tau|] \qquad \text{subject to $\tau\in \widetilde{\mathcal{S}}(x,T)$}
    \end{align}
    for $r>0$, which we treat in our equilibrium framework. Although \cite{BayraktarYao2023} comprises  Problem \eqref{eq_comment_precommitment_discount}, to the best of our knowledge there are no solution methods at hand to derive an optimal stopping time or  the value of \eqref{eq_comment_precommitment_discount}.
      
\end{remark}

Finally, we present an example where in an equilibrium $\tau^{D,\Psi}$ the set $D$ is not an open interval but an open set.
\begin{ex}\label{ex:D_kein_Intervall}
    Consider the payoff function 
    \begin{align*}
        g(x)=\begin{cases}
            |x-2|, & \text{if $x\in(1,3)$,}\\
            |x+2|, & \text{if $x\in (-3,-1)$,}\\
            1, &\text{else,}
        \end{cases}
    \end{align*}
    see Figure \ref{fig:D_kein_Intervall}. 
    Example \ref{ex:BM_Betrag} implies that for a  positive discount factor $r$ such that  $\widetilde{x}=\frac{\tilde{z}}{\sqrt{2r}}\leq 1$  and  $T\geq \frac{\tilde{z}^2}{2r}$,  where $\widetilde{z}$ is the unique positive solution of \eqref{ex:eq_for_z}, an equilibrium is given by  $\tau^{D,\Psi}$ with 
    $D=(-2-\widetilde{x},-2+\widetilde{x})\cup (2-\widetilde{x},2+\widetilde{x})$ and $\psi\equiv 0.$ 
       \begin{figure}[h]
    \begin{center}
        \includegraphics{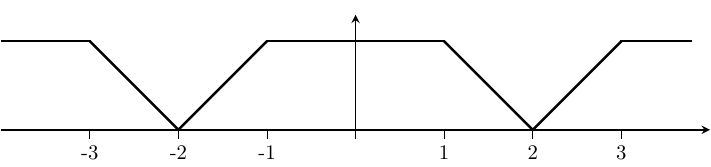}
        \caption{\textnormal{The payoff function in Example \ref{ex:D_kein_Intervall}.}}\label{fig:D_kein_Intervall}
        \end{center}
    \end{figure}
    \end{ex}

\appendix
\section{Appendix}
\begin{lemma}\label{rightcont} 
\begin{enumerate}[(i)]
    \item \label{rightcont_1} For every randomized Markovian time $\tau^{D,\Psi}\in \mathcal{S}(T)$ the expected time function $e_{D,\Psi}$ is continuous. \label{p1}
    \item For every $x\in (\alpha, \beta)$ there is some neighborhood $[x-\eta,x+\eta]$ with $\eta>0$ such that
\begin{align*}
    \Ex_y[\tau_h] \stackrel{h \searrow0 }{\to} 0 
\end{align*} uniformly in $y\in [x-\eta, x+\eta]$. \label{p2}
\end{enumerate}

\end{lemma}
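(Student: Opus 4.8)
The plan is to prove (ii) first, by an elementary comparison argument, and then to deduce (i) from it together with the Markov property \eqref{shiftprop} and the uniform bound $\Ex_z[\tau^{D,\Psi}]\le T$ built into $\mathcal S(T)$. For (ii) I would fix $x$ and $\eta>0$ with $[x-2\eta,x+2\eta]\subset(\alpha,\beta)$, set $m_0:=\min_K\sigma^2>0$ and $M_0:=\max_K|\mu|$ on the compact set $K:=[x-2\eta,x+2\eta]$, and note that for $y\in[x-\eta,x+\eta]$, $0<h\le\eta$ the function $\phi(z):=(z-y)^2$ satisfies $\mathcal A\phi(z)=2\mu(z)(z-y)+\sigma^2(z)\ge m_0-2M_0h$ on $(y-h,y+h)\subset K$, hence $\mathcal A\phi\ge m_0/2$ there once $h\le h_0:=\min\{\eta,\,m_0/(4M_0)\}$. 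Applying Dynkin's formula to $\phi$ on $[0,\tau_h\wedge t]$ and using $|\phi(X_{\tau_h\wedge t})|\le h^2$ gives $h^2\ge\tfrac{m_0}{2}\Ex_y[\tau_h\wedge t]$, and $t\to\infty$ yields $\Ex_y[\tau_h]\le 2h^2/m_0$ uniformly over $y\in[x-\eta,x+\eta]$ and $h\le h_0$; this is the claim (alternatively one can cite \cite[Lemma 25]{BodnariuChristensenLindensjoe2022}).

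For (i), I would first observe that $\tau^D=0$ $\PR_z$-a.s.\ for $z\in D^c$, so $e_{D,\Psi}\equiv 0$ on $D^c$ and continuity on $(D^c)^\circ$ is clear; it remains to treat points of $D$ and of $\partial D$. For $x_0\in D$ choose an interval $I=(x_0-\rho,x_0+\rho)$ with $\overline I\subset D$ and exit time $\tau^I$. A direct computation with the strong Markov property of $(X,N_\Psi)$ and Remark \ref{rem1} \eqref{conddistr} — the same one carried out in the proof of Proposition \ref{fixpsi}, which for this identity uses no continuity of $e_{D,\Psi}$ — shows that on $\overline I$ the function $e_{D,\Psi}$ equals $f_\varphi$ (in the notation of that proof) with boundary data $\varphi:=e_{D,\Psi}\vert_{\partial I}$. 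Since $\varphi$ consists of two finite values and $\psi$ is bounded with finitely many discontinuities, standard regularity for the Dirichlet problem of the uniformly elliptic operator $\mathcal A$ (cf.\ \cite[Theorems 13.11, 13.12 and 13.16]{dynkin_markov2}) gives that $f_\varphi$ solves $\overline{\mathcal A}f-\psi f=-1$ on $I$ and is continuous on $\overline I$ (in fact $e_{D,\Psi}\in\ccc^1(D)$), so $e_{D,\Psi}$ is continuous at $x_0$.

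The remaining (and hardest) part is continuity at $x_0\in\partial D\subset D^c$, where $e_{D,\Psi}(x_0)=0$ and one must show $e_{D,\Psi}(y)\to 0$ as $D\ni y\to x_0$. I would fix $\rho>0$ with $[x_0-2\rho,x_0+2\rho]\subset(\alpha,\beta)$, small enough that the uniform estimate from (ii) holds on $[x_0-\rho,x_0+\rho]$, and for $y\in D$ with $|y-x_0|<\rho$ decompose $\tau^{D,\Psi}$ at $\tau_\rho$ via \eqref{shiftprop} and the strong Markov property (on $\{\tau^{D,\Psi}>\tau_\rho\}$ the path has stayed in $D$ and the clock driving $\tau^\Psi$ restarts afresh), using $e_{D,\Psi}\le T$, to get
\begin{align*}
e_{D,\Psi}(y)=\Ex_y\big[\tau^{D,\Psi}\wedge\tau_\rho\big]+\Ex_y\!\left[\ind{\tau^{D,\Psi}>\tau_\rho}\,e_{D,\Psi}(X_{\tau_\rho})\right]\le\Ex_y[\tau_\rho]+T\,\PR_y\!\big(\tau^D>\tau_\rho\big).
\end{align*}
On $\{\tau^D>\tau_\rho\}$ the process leaves $(y-\rho,y+\rho)$ without hitting $x_0\in D^c$; since $x_0$ lies strictly between $y$ and one of the two endpoints, the exit must occur at the other endpoint and before $x_0$ is reached, whence by the scale function $s$ of $X$, $\PR_y(\tau^D>\tau_\rho)\le|s(y)-s(x_0)|\big/|s(y\pm\rho)-s(x_0)|\to 0$ as $y\to x_0$ (with the sign according to the side of $x_0$ on which $y$ lies, the numerator tending to $0$ and the denominator to a positive limit). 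Given $\eps>0$, one chooses first $\rho$ so small that $\Ex_y[\tau_\rho]<\eps/2$ for all such $y$ (by (ii)), then $y$ close enough to $x_0$, obtaining $e_{D,\Psi}(y)<\eps$ and finishing (i).

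The main obstacle is exactly this continuity at $\partial D$: one cannot control $e_{D,\Psi}$ near $x_0$ through $\Ex_\cdot[\tau^D]$, which may be infinite when $\psi$ is small near $x_0$, so the argument must shave off a shrinking ball (where (ii) gives uniform control) and absorb the remainder into the bound $e_{D,\Psi}\le T$. This is the only place where membership in $\mathcal S(T)$, rather than merely in $\mathcal S$, is used, and indeed $e_{D,\Psi}$ need not even be finite, let alone continuous, without it. The interior part is routine elliptic regularity for $\mathcal A-\psi$, but must be invoked directly rather than via Proposition \ref{fixpsi} to avoid circularity.
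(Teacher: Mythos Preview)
Your argument is correct, but the paper takes a noticeably different route for (i). Rather than splitting into the three cases $(D^c)^\circ$, $D$, and $\partial D$, the paper treats all $x\in\overline D$ at once: it picks $a<b$ in $\overline D$ with $x\in[a,b]$, passes to the process $X^{\tau^\Psi}$ (i.e.\ $X$ killed at $\tau^\Psi$), and writes
\[
e_{D,\Psi}(x)=\Ex_x\big[\tau^{(a,b)}\wedge\tau^\Psi\big]+e_{D,\Psi}(a)\,\PR_x\!\big(X_{\tau^{(a,b)}\wedge\tau^\Psi}=a\big)+e_{D,\Psi}(b)\,\PR_x\!\big(X_{\tau^{(a,b)}\wedge\tau^\Psi}=b\big),
\]
then invokes It\^o--McKean \cite{ito1974diffusion} for the continuity in $x$ of the expected lifetime and the two hitting probabilities of the killed diffusion. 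This handles interior and boundary points in one stroke and, in particular, does not use part (ii). Your approach trades this unified argument for a more hands-on one: elliptic regularity for $\mathcal A-\psi$ in $D$, and at $\partial D$ the shaving estimate $e_{D,\Psi}(y)\le\Ex_y[\tau_\rho]+T\,\PR_y(\tau^D>\tau_\rho)$ combined with (ii) and a scale-function bound. Both are valid; yours is more elementary (no killed-diffusion theory) and makes explicit where the bound $e_{D,\Psi}\le T$ enters, while the paper's is shorter and uniform. For (ii) the two arguments are close in spirit, but your test function $\phi(z)=(z-y)^2$ gives the quantitative estimate $\Ex_y[\tau_h]\le 2h^2/m_0$, whereas the paper uses a single $f$ with $\mathcal Af=1$ and uniform continuity. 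One small caution on your interior step: Theorems~13.11, 13.12, 13.16 in \cite{dynkin_markov2} are stated for continuous (resp.\ H\"older) $\psi$, so when a jump of $\psi$ lies in your interval $I$ you should shrink $I$ to avoid it (there are finitely many) and patch; the $\ccc^1$ claim across a jump then follows since the ODE forces $e_{D,\Psi}''$ to stay bounded.
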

\begin{proof}
Since $\edp=0$ outside $D$ it is sufficient to consider $x\in \overline{D}$. Let $a<b\in \overline{D}$ such that $x\in[a,b]$. 
Let $X^{\tau^{\Psi}}=(X^{\tau^{\Psi}}_t)_{t\in [0,\infty)}$ be given by 
\begin{align*}
    X^{\tau^{\Psi}}_t:=\begin{cases}
        X_t, &t<\tau^{\Psi},\\
        \partial, & t\ge \tau^{\Psi}
    \end{cases}
\end{align*} with a coffin state $\partial$. Additionally we set $\mathfrak m_y:=\inf\{t\ge 0 : X^{\tau^{\Psi}}_t=y\}$, $y\in (\alpha,\beta)$ and $\mathfrak m_\infty:=\inf\{t\ge 0 : X^{\tau^{\Psi}}_t=\partial\}$. By \cite[p.\ 119,  p.\ 121]{ito1974diffusion} the functions $x\mapsto \PR_x(\mathfrak m_a <\mathfrak m_b) = \PR_x(X_{\tau^{(a,b)} \wedge\tau^\Psi}=a )$, $x\mapsto \PR_x(\mathfrak m_a >\mathfrak m_b) = \PR_x(X_{\tau^{(a,b)} \wedge\tau^\Psi}=b )$ and $x\mapsto \Ex_x[\mathfrak m_a \wedge \mathfrak m_b \wedge \mathfrak m_\infty] = \Ex_x[\tau^{(a,b)} \wedge\tau^\Psi]$ are continuous.
The Markov property yields
\begin{align*}
    \edp(x) = \Ex_x[\tau^{(a,b)} \wedge\tau^\Psi] + \edp(a) \PR_x(X_{\tau^{(a,b)} \wedge\tau^\Psi}=a )+\edp(b) \PR_x(X_{\tau^{(a,b)} \wedge\tau^\Psi}=b),
\end{align*} which proves the first claim.

For the second claim let $f:(\alpha,\beta)\to\R$ solve $\mathcal{A}f=1$. By Dynkin's formula
\begin{align*}
    \Ex_y[\tau_h]=\Ex_y\left[ \int_0^{\tau_h} \mathcal{A}f(X_s)ds \right] = \Ex_y[f(X_{\tau_h})]-f(y)\le \sup_{z_1,z_2\in[y-h,y+h]} \vert f(z_1)-f(y_2)\vert.
\end{align*} Uniform continuity of $f$ on any $[x-\eta,x+\eta]\subset (\alpha,\beta)$ provides the claim.
\end{proof}

\begin{lemma} \label{Kri10}
Let $\tau^{D, \Psi}, \tau^{\tilde{D},\tilde{\Psi}} \in \mathcal{S}$ be randomized Markovian times and $\psi\in \mathfrak{V}_D$, $\tilde{\psi}\in \mathfrak{V}_{\tilde{D}}$ the functions defining $\Psi$ and $\tilde{\Psi}$ via \eqref{defpsi}, respectively. Suppose that $g$ has real left and right limits for all $x\in D$. For all $x\in D \cap\tilde{D}$ such that $J_{\tau^{D,\Psi}}$ is continuous in $x$ we have
\begin{align}
    \lim_{h \searrow 0}\frac{ J_{ \theta_{\tau_h}\circ\tau^{D, \Psi} +\tau_h}(x)  -J_{\tau^{D, \Psi}\diamond \tau^{\tilde{D},\tilde{\Psi}}(h)}(x)}{\Ex_x[\tau_h]}=&\, \frac{1}{2}\tilde{\psi}(x-)(J_{\tau^{D,\Psi}}(x)-g(x-) )\notag\\
    &+\frac{1}{2} \tilde{\psi}(x)(J_{\tau^{D,\Psi}}(x)- g(x+) ). \notag
\end{align} 
\end{lemma}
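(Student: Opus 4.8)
The plan is to expand the numerator $J_{\theta_{\tau_h}\circ\tau^{D,\Psi}+\tau_h}(x)-J_{\tau^{D,\Psi}\diamond\tau^{\tilde D,\tilde\Psi}(h)}(x)$ in closed form, divide by $\Ex_x[\tau_h]$, and pass to the limit; the factor $\tfrac12$ will come from an occupation-time symmetry of $X$ near $x$. First I would fix $h$ small enough that $(x-h,x+h)\subset D\cap\tilde D$ (possible as $D,\tilde D$ are open and $x\in D\cap\tilde D$), so that $\tau^D,\tau^{\tilde D}>\tau_h$ $\PR_x$-a.s.; in particular $\{\tau^{\tilde D,\tilde\Psi}\le\tau_h\}=\{\tau^{\tilde\Psi}\le\tau_h\}$ and $\tau^{\tilde D,\tilde\Psi}=\tau^{\tilde\Psi}$ there. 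Unfolding Definition~\ref{lpo}, the $\{\tau^{\tilde D,\tilde\Psi}>\tau_h\}$-part of $J_{\tau^{D,\Psi}\diamond\tau^{\tilde D,\tilde\Psi}(h)}(x)$ equals $\Ex_x[\ind{\tau^{\tilde D,\tilde\Psi}>\tau_h}e^{-r\tau_h}J_{\tau^{D,\Psi}}(X_{\tau_h})]$ by the restart identity \eqref{aux1} (the perturbation runs a fresh copy of $\tau^{D,\Psi}$ from $\tau_h$), and \eqref{aux1} also gives $J_{\theta_{\tau_h}\circ\tau^{D,\Psi}+\tau_h}(x)=\Ex_x[e^{-r\tau_h}J_{\tau^{D,\Psi}}(X_{\tau_h})]$, so the numerator collapses to
\begin{align*}
\Ex_x\!\Big[\ind{\tau^{\tilde\Psi}\le\tau_h}\big(e^{-r\tau_h}J_{\tau^{D,\Psi}}(X_{\tau_h})-e^{-r\tau^{\tilde\Psi}}g(X_{\tau^{\tilde\Psi}})\big)\Big].
\end{align*}

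Next I would remove the randomisation clock. The last expression is of the form $\Ex_x[F(X,\tau^{\tilde\Psi})]$ with $F$ a path functional bounded for paths staying in $[x-h,x+h]$, so Remark~\ref{rem1}\,\eqref{conddistr} applied to $\tilde\Psi$ turns it into $\Ex_x\big[\int_0^{\tau_h}(e^{-r\tau_h}J_{\tau^{D,\Psi}}(X_{\tau_h})-e^{-rs}g(X_s))\,\tilde\psi(X_s)\,e^{-\int_0^s\tilde\psi(X_r)dr}\,ds\big]$. Dividing by $\Ex_x[\tau_h]$ and letting $h\searrow0$, I would drop, each at the cost of $o(\Ex_x[\tau_h])$, the factors $e^{-r\tau_h}$, $e^{-rs}$ and $e^{-\int_0^s\tilde\psi(X_r)dr}$ (using $\Ex_x[\tau_h^2]=o(\Ex_x[\tau_h])$, \cite[Lemma~25]{BodnariuChristensenLindensjoe2022}, and the boundedness near $x$ of $J_{\tau^{D,\Psi}}$, $g$ and $\tilde\psi$) and the increment $J_{\tau^{D,\Psi}}(X_{\tau_h})-J_{\tau^{D,\Psi}}(x)$ (using continuity of $J_{\tau^{D,\Psi}}$ at $x$). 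This leaves $\Ex_x\big[\int_0^{\tau_h}(J_{\tau^{D,\Psi}}(x)-g(X_s))\,\tilde\psi(X_s)\,ds\big]$.

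Then I would split the integrand on $\{X_s<x\}$ and $\{X_s>x\}$ — the set $\{s\le\tau_h:X_s=x\}$ is $\PR_x$-a.s.\ Lebesgue-null since $\sigma>0$ — and use right continuity of $\tilde\psi,g$ and the existence of their left limits at $x$ to replace $(J_{\tau^{D,\Psi}}(x)-g(X_s))\tilde\psi(X_s)$ by $(J_{\tau^{D,\Psi}}(x)-g(x+))\tilde\psi(x+)$ on $\{X_s>x\}$ and by $(J_{\tau^{D,\Psi}}(x)-g(x-))\tilde\psi(x-)$ on $\{X_s<x\}$, uniformly for $X_s\in[x-h,x+h]$. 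Up to $o(\Ex_x[\tau_h])$ the numerator then equals
\begin{align*}
(J_{\tau^{D,\Psi}}(x)-g(x+))\,\tilde\psi(x+)\,\Ex_x\!\Big[\!\int_0^{\tau_h}\!\!\ind{X_s>x}ds\Big]+(J_{\tau^{D,\Psi}}(x)-g(x-))\,\tilde\psi(x-)\,\Ex_x\!\Big[\!\int_0^{\tau_h}\!\!\ind{X_s<x}ds\Big].
\end{align*}
Since these occupation times sum to $\tau_h$ $\PR_x$-a.s., dividing by $\Ex_x[\tau_h]$ and using $\Ex_x[\int_0^{\tau_h}\ind{X_s<x}ds]/\Ex_x[\tau_h]\to\tfrac12$ together with $\tilde\psi(x+)=\tilde\psi(x)$ yields exactly the claimed limit.

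The hard part is this last occupation-time symmetry: for a general, non-symmetric one-dimensional diffusion the ratio is not $\tfrac12$ for fixed $h$, and one has to obtain it by localisation — after a scale change $X$ behaves on $[x-h,x+h]$, to leading order, like $\sigma(x)$ times a Brownian motion, for which reflection about $x$ makes the two occupation times equidistributed — or, equivalently, by writing both through $\Ex_x[L^y_{\tau_h}]$ (which vanishes for $y\notin[x-h,x+h]$) via the occupation-times formula and exploiting the asymptotic symmetry of the Green function of $[x-h,x+h]$. A second delicate point, already in the first step, is the use of \eqref{aux1} and the cancellation: one must invoke the strong-Markov/Poisson-clock construction behind Definition~\ref{rmt} and \eqref{shiftprop} to know that restarting $\tau^{D,\Psi}$ at $\tau_h$ draws on a fresh randomisation clock, independent of $\f^X_{\tau_h}$ and of $\{\tau^{\tilde D,\tilde\Psi}>\tau_h\}$, so that the restarted reward may be replaced by $e^{-r\tau_h}J_{\tau^{D,\Psi}}(X_{\tau_h})$.
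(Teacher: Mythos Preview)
Your proposal is correct and follows essentially the same route as the paper: reduce the numerator via \eqref{aux1}--\eqref{aux2} to $\Ex_x[\ind{\tau^{\tilde\Psi}\le\tau_h}(e^{-r\tau_h}J_{\tau^{D,\Psi}}(X_{\tau_h})-e^{-r\tau^{\tilde\Psi}}g(X_{\tau^{\tilde\Psi}}))]$, eliminate the randomisation via Remark~\ref{rem1}\,\eqref{conddistr}, drop the exponential factors using $\Ex_x[\tau_h^2]=o(\Ex_x[\tau_h])$, and finish with the occupation-time symmetry. The only deviation is in how that last symmetry is obtained: the paper isolates it as Lemma~\ref{appL1} and proves it by passing, via \cite[Theorem~5.5]{dynkin_markov}, from $\Ex_x[\int_0^{\tau_h}\ind{X_s>x}ds]/\Ex_x[\tau_h]$ to $\lim_{t\searrow0}t^{-1}\!\int_0^t\PR_x(X_s>x)\,ds$ and then invoking $\lim_{s\searrow0}\PR_x(X_s>x)=\tfrac12$ from \cite[Lemma~5.5]{bayraktar2022equilibria}, rather than through your scale-change/Green-function localisation.
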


\begin{beweis}
Note that by the Markov property
\begin{align}
    &J_{\theta_{\tau_h} \circ\tau^{D, \Psi}  +\tau_h} (x) =\Ex_{x}[e^{-r\tau_h} J_{\tau^{D,\Psi}}(X_{\tau_h}) ] \label{aux1}
\end{align}
and 
\begin{align}
    &J_{\tau^{D, \Psi}\diamond \tau^{\tilde{D},\tilde{\Psi}}(h)} (x) =\Ex_{x}\big[ \ind{\tau^{\tilde{D},\tilde{\Psi}} \le \tau_h } e^{-r \tau^{\tilde{D},\tilde{\Psi}}} g(X_{\tau^{\tilde{D},\tilde{\Psi}}}) + \ind{\tau^{\tilde{D},\tilde{\Psi}} > \tau_h}e^{-r \tau_h} J_{\tau^{D,\Psi}}(X_{\tau_h})\big] \label{aux2}.
\end{align}

Let $h>0$ such that $[x-h,x+h] \subset \tilde{D}$. This leads to $\tau^{\tilde{D},\tilde{\Psi}} = \tau^{\tilde{\Psi}}$ on $\{ \tau^{\tilde{D},\tilde{\Psi}} \le \tau_h\}$. Now using \eqref{aux1}, \eqref{aux2} and Remark \ref{rem1} \eqref{conddistr} we obtain
\begin{align}
    &J_{\theta_{\tau_h} \circ \tau^{D, \Psi} +\tau_h} (x)-J_{\tau^{D, \Psi}\diamond \tau^{\tilde{D},\tilde{\Psi}}(h)} (x)  \notag\\[0.1cm]
    & =  \Ex_{x}\big[ \ind{\tau^{\tilde{D},\tilde{\Psi}} \le \tau_h } \big(e^{-r \tau_h} J_{\tau^{D,\Psi}}(X_{\tau_h}) - e^{-r \tau^{\tilde{D},\tilde{\Psi}}} g(X_{\tau^{\tilde{D},\tilde{\Psi}}})\big)\big]\notag\\[0.1cm]
    &= \Ex_{x}\left[\Ex_x\left[ \ind{\tau^{\tilde{\Psi}} \le \tau_h } \big(e^{-r \tau_h} J_{\tau^{D,\Psi}}(X_{\tau_h}) - e^{-r \tau^{\tilde{\Psi}}} g(X_{\tau^{\tilde{\Psi}}}) \big) \vert \f^X_{\infty}\right]\right]\notag\\[0.1cm]
    &= \Ex_{x}\bigg[ \int_0^{\tau_h}  \big(e^{-r \tau_h} J_{\tau^{D,\Psi}}(X_{\tau_h}) - e^{-rt} g(X_{t}) \big ) \tilde{\psi}(X_t)e^{-\int_0^t \tilde{\psi}(X_s)ds} dt \bigg]\notag\\[0.1cm]
    &=   \Ex_{x}\left[ \int_0^{\tau_h}   J_{\tau^{D,\Psi}}(X_{\tau_h}) \tilde{\psi}(X_t) \left(e^{-\int_0^t \tilde{\psi}(X_s)ds-r \tau_h}-1\right) dt \right]  +  \Ex_x \left[\int_0^{\tau_h}J_{\tau^{D,\Psi}}(X_{\tau_h}) \tilde{\psi}(X_t) dt\right] \notag\\[0.1cm]
    &\quad -   \Ex_{x}\left[ \int_0^{\tau_h}   g(X_{t})  \tilde{\psi}(X_t)\left( e^{-\int_0^t \tilde{\psi}(X_s)ds-rt}-1\right) dt \right]  - \Ex_{x}\left[ \int_0^{\tau_h}   g(X_{t})  \tilde{\psi}(X_t) dt \right] . \label{16}
\end{align}
We now divide \eqref{16} by $\Ex_x[\tau_h]$ and evaluate the limits of all the summands on the new right-hand side in order to prove the claim. The first and third summand can be estimated as follows using $e^{-z}-1 \ge -z$ for all $z \ge 0$.
\begin{align*}
    & \Bigg\vert  \frac{ \Ex_{x}\left[ \int_0^{\tau_h}   J_{\tau^{D,\Psi}}(X_{\tau_h}) \tilde{\psi}(X_t) \left(e^{-\int_0^t \tilde{\psi}(X_s)ds-r \tau_h}-1\right) dt \right] }{\Ex_x[\tau_h]} \\[0.1cm]
    & -\frac{  \Ex_{x}\left[ \int_0^{\tau_h}   g(X_{t})  \tilde{\psi}(X_t)\left( e^{-\int_0^t \tilde{\psi}(X_s)ds-rt}-1\right) dt \right]}{\Ex_x[\tau_h]}\Bigg\vert \\[0.1cm]
    \le &  \,\frac{  \Ex_{x}\left[ \int_0^{\tau_h}  ( g(X_{t})+ J_{\tau^{D,\Psi}}(X_{\tau_h}))  \tilde{\psi}(X_t)(\int_0^{\tau_h} \tilde{\psi}(X_s)ds+r\tau_h)dt \right]  }{\Ex_x[\tau_h]} \\[0.1cm]
    \le& \,\sup_{y_1,y_2,y_3,y_4\in [x-h,x+h]} (g(y_1)+ J_{\tau^{D,\Psi}}(y_2))\tilde{\psi}(y_3) ( \tilde{\psi}(y_4) + r ) \frac{\Ex_x[\tau_h^2]}{\Ex_x[\tau_h]} \stackrel{h \searrow 0}{\to} 0.
\end{align*}
For the proof of $\frac{\Ex_x[\tau_h^2]}{\Ex_x[\tau_h]} \stackrel{h \searrow 0}{\to} 0$ we refer to \cite[Lemma 25]{BodnariuChristensenLindensjoe2022}. With Lemma \ref{appL1} from the appendix we can determine the limit of the fourth summand of the right-hand side of \eqref{16} divided by $\Ex_x[\tau_h]$:
\begin{align*}
    \frac{\Ex_{x}\left[ \int_0^{\tau_h}   g(X_{t})  \tilde{\psi}(X_t) dt \right]}{\Ex_x[\tau_h]} \stackrel{h \searrow 0}{\to}\frac{1}{2}(\tilde{\psi}(x-)g(x-)+\tilde{\psi}(x+)g(x+)).
\end{align*}
For the second summand of \eqref{16} over $\Ex_x[\tau_h]$ we have
\begin{align*}
    &\left \vert \frac{\Ex_x[\int_0^{\tau_h}J_{\tau^{D,\Psi}}(X_{\tau_h})\tilde{\psi}(X_t)dt] }{\Ex_x[\tau_h]}-\frac{1}{2} J_{\tau^{D,\Psi}}(x) (\tilde{\psi}(x-) + \tilde{\psi}(x+)) \right \vert\\[0.1cm]
    \le& \left \vert \frac{\Ex_x[\int_0^{\tau_h}J_{\tau^{D,\Psi}}(X_{\tau_h})\tilde{\psi}(X_t)dt] }{\Ex_x[\tau_h]}-\frac{\Ex_x[\int_0^{\tau_h}J_{\tau^{D,\Psi}}(x)\tilde{\psi}(X_t)dt] }{\Ex_x[\tau_h]} \right \vert \\[0.1cm]
    &+ \left \vert J_{\tau^{D,\Psi}}(x)\frac{\Ex_x[\int_0^{\tau_h}\tilde{\psi}(X_t)dt] }{\Ex_x[\tau_h]}-\frac{1}{2} J_{\tau^{D,\Psi}}(x) (\tilde{\psi}(x-) + \tilde{\psi}(x+)) \right \vert\\[0.1cm]
    \le &  \sup_{y\in[x-h,x+h]}\vert J_{\tau^{D,\Psi}}(y)-J_{\tau^{D,\Psi}}(x)\vert \frac{\Ex_x[\int_0^{\tau_h}\tilde{\psi}(X_t)dt] }{\Ex_x[\tau_h]} \\[0.1cm]
    &+ \vert  J_{\tau^{D,\Psi}}(x)\vert \left \vert \frac{\Ex_x[\int_0^{\tau_h}\tilde{\psi}(X_t)dt] }{\Ex_x[\tau_h]}-\frac{1}{2}  (\tilde{\psi}(x-) + \tilde{\psi}(x+))\right \vert.
\end{align*}
By Lemma \ref{appL1} and the continuity of $J_{\tau^{D,\Psi}}$ in $x$ the right-hand side of this inequality goes to 0 for $h\searrow0$. Invoking the left-hand side of \eqref{16} as well as the right continuity of $\psi$ this finishes the proof.
\end{beweis}

\begin{lemma}\label{Jstetigkeit}
Let $\tau^{D, \Psi} \in \mathcal{S}(T)$ be a randomized Markovian time and $\psi \in \mathfrak{V}_D$ the function defining $\Psi$ via \eqref{defpsi}. Let $D' $ be some connected component of $D$. If $g\vert_{\overline{D'}}$ is bounded, then $J_{\tau^{D,\Psi}}\vert_{\overline{D'}}$ is continuous.
\end{lemma}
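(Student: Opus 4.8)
The plan is to represent $J:=J_{\tau^{D,\Psi}}$ on the fixed component $D'$ by a first‑exit identity and then to prove continuity separately in the interior of $D'$ and at its finite endpoints. Some preliminaries first: $D'$, being a connected component of the open set $D\subset\R$, is an open interval $(a',b')$, and any of its endpoints lying in $(\alpha,\beta)$ must belong to $D^c$. Since $X$ has continuous paths and $D\cap\overline{D'}=D'$, for every $x\in\overline{D'}$ we have $\tau^D=\tau^{D'}$ and $X_{\tau^{D,\Psi}}\in\overline{D'}$ $\PR_x$‑a.s.; hence $|J|\le\sup_{\overline{D'}}g<\infty$ on $\overline{D'}$, and $J=g$ at the endpoints of $D'$ that lie in $D^c$. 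The identity I would use is, for $a\le x\le b$ with $(a,b)\subset D'$,
\begin{align*}
J(x)=\Ex_x\!\left[\int_0^{\tau^{(a,b)}}\!\!e^{-\int_0^s(r+\psi(X_u))\,du}g(X_s)\psi(X_s)\,ds\right]+\Ex_x\!\left[e^{-r\tau^{(a,b)}-\int_0^{\tau^{(a,b)}}\psi(X_u)\,du}\,J\!\left(X_{\tau^{(a,b)}}\right)\right],
\end{align*}
derived by conditioning on $\f^X_\infty$ via Remark \ref{rem1} \eqref{conddistr} and by applying the strong Markov property of $(X,N_\Psi)$ together with the shift property \eqref{shiftprop} on the event $\{\tau^\Psi\ge\tau^{(a,b)}\}$; there $X_{\tau^{(a,b)}}\in\{a,b\}\subset\overline{D'}$, so the value after time $\tau^{(a,b)}$ equals $J(X_{\tau^{(a,b)}})$ whether the endpoint lies in $D$ or in $D^c$, and $\tau^{(a,b)}<\infty$ $\PR_x$‑a.s.\ since $\sigma>0$.

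For interior continuity I would fix $x_0\in D'$ and choose $a'<a<x_0<b<b'$, so $[a,b]$ is a compact subinterval of $(\alpha,\beta)$ on which $\psi$ and $g$ are bounded. Since $X_{\tau^{(a,b)}}\in\{a,b\}$, the identity becomes $J=A+J(a)\,q_a+J(b)\,q_b$ on $[a,b]$, where $q_a,q_b$ are the discounted exit ``probabilities'' of $X$ killed at rate $r+\psi$ on $(a,b)$ and $A(x)=\int_{(a,b)}G(x,y)\,g(y)\psi(y)\,m(dy)$ with $G$ the corresponding Green's function and $m$ the speed measure. Here $q_a,q_b$ are continuous on $[a,b]$ by one‑dimensional diffusion theory for diffusions with bounded killing rate (cf.\ the proof of Lemma \ref{rightcont}), and $A$ is continuous on $[a,b]$ by dominated convergence, using $0\le G\le G_0$, $\int_{(a,b)}G_0(x,\cdot)\,dm=\Ex_x[\tau^{(a,b)}]$ bounded on $[a,b]$, $m((a,b))<\infty$ and the boundedness of $g$ on $\overline{D'}$. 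This gives continuity of $J$ on $D'$.

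For continuity at a finite endpoint, say $x_0=a'\in(\alpha,\beta)$ (so $a'\in D^c$ and $J(a')=g(a')$), I would argue directly that $\tau^{D,\Psi}$ degenerates and its terminal position concentrates at $a'$ as $x\downarrow a'$ inside $\overline{D'}$. By Lemma \ref{rightcont} the expected time function $\edp$ is continuous and vanishes on $D^c$, hence $\Ex_x[\tau^{D,\Psi}]=\edp(x)\to0$. Fixing $\eta>0$ with $[a',a'+\eta]\subset(\alpha,\beta)$ and $a'+\eta<b'$, we have $\{X_{\tau^{D,\Psi}}\ne a'\}\subset\{\tau^\Psi<\tau^{(a',a'+\eta)}\}\cup\{\tau^{(a',a'+\eta)}<\tau^{D'}\}$, and $\PR_x(\tau^\Psi<\tau^{(a',a'+\eta)})\le\Ex_x\big[1-e^{-\int_0^{\tau^{(a',a'+\eta)}}\psi(X_u)\,du}\big]\le(\sup_{[a',a'+\eta]}\psi)\,\Ex_x[\tau^{(a',a'+\eta)}]\to0$ while $\PR_x(\tau^{(a',a'+\eta)}<\tau^{D'})=\tfrac{s(x)-s(a')}{s(a'+\eta)-s(a')}\to0$, where $s$ is the scale function and $\Ex_x[\tau^{(a',a'+\eta)}]\to0$ because the mean exit time of an interval is continuous up to the boundary. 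Hence $\PR_x(X_{\tau^{D,\Psi}}\ne a')\to0$. Writing $J(x)=g(a')\,\Ex_x[\ind{X_{\tau^{D,\Psi}}=a'}e^{-r\tau^{D,\Psi}}]+\Ex_x[\ind{X_{\tau^{D,\Psi}}\ne a'}e^{-r\tau^{D,\Psi}}g(X_{\tau^{D,\Psi}})]$, the first expectation tends to $1$ (because $\PR_x(X_{\tau^{D,\Psi}}=a')\to1$ and $0\le1-e^{-r\tau^{D,\Psi}}\le r\tau^{D,\Psi}\to0$ in $L^1$) and the second is bounded by $(\sup_{\overline{D'}}g)\,\PR_x(X_{\tau^{D,\Psi}}\ne a')\to0$; so $J(x)\to g(a')=J(a')$. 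The endpoint $b'$ is symmetric, and together with interior continuity this proves continuity of $J$ on $\overline{D'}$.

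I expect the endpoint case to be the main obstacle. Decomposing $J$ at the first exit from a small ball around $a'$ only yields the circular relation $J(x)\approx\tfrac12 g(a')+\tfrac12 J(a'+2(x-a'))$, which does not close; the way around it is precisely to exploit that $\tau^{D,\Psi}$ itself vanishes in $L^1$ and that the stopping position concentrates at the boundary point, the latter resting on the estimate $\PR_x(\tau^\Psi<\tau^{D'})\to0$ via Remark \ref{rem1} \eqref{conddistr} and on continuity of exit times and of the scale function from one‑dimensional diffusion theory. Since all the estimates are localised to compact subintervals of $(\alpha,\beta)$, a possibly unbounded $D'$ creates no extra difficulty.
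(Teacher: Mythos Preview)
Your argument is correct, but it takes a markedly different route from the paper's. The paper absorbs the discount factor by introducing $\hat X$, the process $X$ killed at an independent $\Exp(r)$ time, so that $J(x)=\Ex_x[g(\hat X_{\tau^{D,\Psi}})]$ with the convention $g(\infty)=0$. It then makes a \emph{single} decomposition at the stopping time $\tau^{(a,b)}\wedge\tau^\Psi$ (not $\tau^{(a,b)}$): writing
\[
J(x)=\PR_x\!\big(\hat X_{\tau^{(a,b)}\wedge\tau^\Psi}=a\big)\,J(a)+\Ex_x\!\big[\ind{\hat X_{\tau^{(a,b)}\wedge\tau^\Psi}\ne a}\,g(\hat X_{\tau^{D,\Psi}})\big],
\]
the first probability tends to $1$ as $x\searrow a$ by the same Itô--McKean continuity you invoke, and the second term is controlled by $(1-\PR_x(\hat X_{\tau^{(a,b)}\wedge\tau^\Psi}=a))\sup_{\overline{D'}}g$. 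This one computation covers interior points and the finite endpoint $a=\inf D'$ uniformly, with no separate boundary analysis and no Green's function.

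Your version trades this slickness for explicitness: you keep the integral term coming from randomized stopping visible, prove interior continuity via the Green's function of the $(r+\psi)$-killed diffusion, and then give an independent endpoint argument showing $\tau^{D,\Psi}\to0$ in $L^1$ and $X_{\tau^{D,\Psi}}\to a'$ in probability. That endpoint argument is nice and self-contained; the interior step is a touch heavier, and your dominated-convergence bound should be stated with an $x$-free majorant (e.g.\ $G(x,y)\le G_0(x,y)\le G_0(y,y)$, which is bounded on $(a,b)$). Net effect: the paper's proof is shorter and treats all points at once; yours avoids the auxiliary killed process and makes each mechanism (randomized stopping, exit, boundary degeneration) transparent.
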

\begin{beweis}
Let $E\sim \Exp(r)$ be a random variable on $(\Omega,\f,\PR)$ that is independent of $X$ and $N$ and let $\hat{X}=(\hat{X}_t)_{t \in [0,\infty)}$ denote the process $X$ killed with $E$, i.e.
\begin{align*}
    \hat{X}_t(\omega):= \begin{cases}
        X_t(\omega) &\text{if }  t<E(\omega),\\
        \infty &\text{if } t \ge E(\omega).
    \end{cases}
\end{align*} We extend the function $g$ to $\infty$ by setting $g(\infty):= 0$. Now for $\inf D' <a<x<b<\sup D'$ the Markov property gives
\begin{align*}
    J_{\tau^{D,\Psi}}(x) &= \Ex_x[ g(\hat{X}_{\tau^{D,\Psi}})] \\
    &= \PR_x(\hat{X}_{\tau^{(a,b)}\wedge\tau^\Psi}=a) \Ex_a[ g(\hat{X}_{\tau^{D,\Psi}})] + \Ex_x[\ind{\hat{X}_{\tau^{(a,b)}\wedge\tau^\Psi}\neq a} g(\hat{X}_{\tau^{D,\Psi}})]\\
    &\stackrel{x\searrow a}{\to} \Ex_a[ g(\hat{X}_{\tau^{D,\Psi}})] = J_{\tau^{D,\Psi}}(a)
\end{align*} since $\PR_x(\hat{X}_{\tau^{(a,b)}\wedge\tau^\Psi}=a) \to 1$ for $x\to a$ by \cite[p.\ 119]{ito1974diffusion} as in the proof of Lemma \ref{rightcont} and thus also 
\begin{align*}
    \big \vert \Ex_x[\ind{\hat{X}_{\tau^{(a,b)}\wedge\tau^\Psi}\neq a} g(\hat{X}_{\tau^{D,\Psi}})] \big \vert 
    \le (1-\PR_x(\hat{X}_{\tau^{(a,b)}\wedge\tau^\Psi}=a)) \sup_{y\in \overline{D'}} g(y)\to 0.
    \end{align*}
    The case $x \nearrow b$ is analogous.
\end{beweis} 

\begin{lemma}\label{appL1}
For every $x\in (\alpha,\beta)$ and each function $f\colon (\alpha,\beta) \to \R$ such that $f(x+)$ and $f(x-)$ exist (in $\R$), we have
\begin{align}
    \lim_{h \searrow 0} \frac{ \Ex_x[ \int_0^{\tau_h}f(X_s) ds ]}{\Ex_x[\tau_h]} = \frac{1}{2}(f(x-) + f(x+)).\label{nervmichnichtbitte}
\end{align}
\end{lemma}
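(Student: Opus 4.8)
The idea is to represent the occupation times of $X$ before $\tau_h$ through the scale function $s$ and the speed measure $m$ of the diffusion, after which the statement reduces to an elementary first‑order expansion as $h\searrow 0$. Fix $x\in(\alpha,\beta)$. Since $f(x+)$ and $f(x-)$ exist in $\R$, there is $\delta>0$ with $|f|\le M<\infty$ on $(x-\delta,x)\cup(x,x+\delta)$; the value $f(x)$ is irrelevant for the left‑hand side of \eqref{nervmichnichtbitte}, because $\int_0^t \ind{X_s=x}\,ds=0$ $\PR_x$‑a.s.\ (the non‑degenerate diffusion $X$ spends Lebesgue‑null time at a fixed point, e.g.\ by the occupation‑times formula together with $d\langle X\rangle_s=\sigma^2(X_s)\,ds$), so I may and do assume $f$ is Borel and bounded by $M$ on $[x-\delta,x+\delta]$. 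Let $s$ be the scale function and $m$ the speed measure of $X$; by continuity and positivity of $\sigma$ (and continuity of $\mu$) one has $s\in\ccc^2$ with $s'>0$, and $m$ has a continuous strictly positive density $m'=\tfrac{2}{\sigma^2 s'}$. For $x-\delta\le a<x<b\le x+\delta$ the classical formula for the occupation measure of $X$ killed upon leaving $(a,b)$ (cf.\ \cite{ito1974diffusion}) gives, for every bounded Borel $\varphi$,
\begin{align*}
    \Ex_x\Big[\int_0^{\tau^{(a,b)}} \varphi(X_s)\,ds\Big] = \int_a^b G_{a,b}(x,y)\,\varphi(y)\,m'(y)\,dy,\qquad G_{a,b}(x,y):=\frac{\big(s(x\wedge y)-s(a)\big)\big(s(b)-s(x\vee y)\big)}{s(b)-s(a)},
\end{align*}
and note that $\tau_h=\tau^{(x-h,x+h)}$ $\PR_x$‑a.s.

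\textbf{Main computation.} Applying this with $a=x-h$, $b=x+h$ (for $h\in(0,\delta)$) to $\varphi=f$ and to $\varphi\equiv1$ and substituting $y=x+uh$ yields
\begin{align*}
    \frac{\Ex_x\!\big[\int_0^{\tau_h}f(X_s)\,ds\big]}{\Ex_x[\tau_h]}=\frac{\displaystyle\int_{-1}^{1} f(x+uh)\,\tfrac1h G_{x-h,x+h}(x,x+uh)\,m'(x+uh)\,du}{\displaystyle\int_{-1}^{1} \tfrac1h G_{x-h,x+h}(x,x+uh)\,m'(x+uh)\,du}.
\end{align*}
A first‑order Taylor expansion of $s$ around $x$, using $s\in\ccc^1$ and the uniform continuity of $s'$ on $[x-\delta,x+\delta]$, gives $\tfrac1h G_{x-h,x+h}(x,x+uh)\to \tfrac12 s'(x)(1-|u|)$ as $h\searrow0$, uniformly in $u\in[-1,1]$, and in particular $\tfrac1h G_{x-h,x+h}(x,x+uh)\le C(1-|u|)$ for all small $h$. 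Since moreover $m'(x+uh)\to m'(x)$ uniformly and $f(x+uh)\to f(x\pm)$ for $\pm u>0$, dominated convergence (with dominating function $u\mapsto CM\big(\max_{[x-\delta,x+\delta]}m'\big)(1-|u|)$) shows the numerator tends to $\tfrac14 s'(x)m'(x)(f(x+)+f(x-))$ and the denominator to $\tfrac12 s'(x)m'(x)\int_{-1}^{1}(1-|u|)\,du=\tfrac12 s'(x)m'(x)$. Taking the quotient gives \eqref{nervmichnichtbitte}. (As a sanity check, the denominator computation reproduces $\Ex_x[\tau_h]\sim h^2/\sigma^2(x)$.)

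\textbf{Main obstacle.} The only genuine work is the uniform asymptotics $\tfrac1h G_{x-h,x+h}(x,x+uh)\to \tfrac12 s'(x)(1-|u|)$: this is a routine $\ccc^1$‑expansion, but one must keep the error terms $o(h)$ \emph{uniform} in $u$ and exhibit the domination. Everything else — the harmlessness of $f(x)$, the occupation‑time representation, and the continuity of $m'$ — is standard. An equivalent route, if one prefers to avoid dominated convergence with a varying integrand, is to first split $f=f(x+)\inda{(x,\beta)}+f(x-)\inda{(\alpha,x)}+g$ with $g(x\pm)=0$, bound the $g$‑contribution by $\sup_{0<|y-x|\le h}|g(y)|\to0$ (again using $\int_0^{\tau_h}\ind{X_s=x}\,ds=0$), and then only prove $\Ex_x[\int_0^{\tau_h}\ind{X_s>x}\,ds]/\Ex_x[\tau_h]\to\tfrac12$, which is the same expansion applied with $\varphi=\inda{(x,x+h)}$.
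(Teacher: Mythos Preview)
Your proof is correct and takes a genuinely different route from the paper. The paper first reduces to the indicator case, showing it suffices to prove $\Ex_x[\int_0^{\tau_h}\inda{\{X_s>x\}}\,ds]/\Ex_x[\tau_h]\to\tfrac12$ (and the symmetric statement). It then invokes Dynkin's characteristic operator (\cite[Theorem~5.5]{dynkin_markov}) to replace the ratio by a time average $\tfrac1t\int_0^t\PR_x(X_s>x)\,ds$, and finally quotes an external result (\cite[Lemma~5.5]{bayraktar2022equilibria}) for $\PR_x(X_s>x)\to\tfrac12$; the null occupation at $x$ is obtained via Aronson-type upper bounds on the transition density. Your argument bypasses all of this by writing both numerator and denominator through the explicit Green function $G_{x-h,x+h}(x,\cdot)$ with respect to the speed measure, rescaling $y=x+uh$, and doing a uniform $\ccc^1$ expansion of the scale function. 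This is more self-contained for one-dimensional diffusions (it uses only the classical scale/speed machinery from \cite{ito1974diffusion} rather than heat-kernel bounds and an imported lemma) and delivers the full statement in one stroke rather than via the indicator reduction. The paper's approach, on the other hand, is perhaps more portable: the key ingredients (characteristic operator, $\PR_x(X_s>x)\to\tfrac12$) have analogues beyond the one-dimensional setting, whereas your explicit Green function is specific to dimension one.
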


\begin{proof}
    Without loss of generality we may assume that $\mu,\sigma$ are bounded and $\sigma$ is also bounded away from $0$ as well as $\alpha=-\infty$ and $\beta=\infty$, since \eqref{nervmichnichtbitte} only depends on $X$ in a neighborhood of $x$.
    First we show that in order to prove \eqref{nervmichnichtbitte} it suffices to verify
\begin{align}
    \lim_{h \searrow 0}& \frac{ \Ex_x[ \int_0^{\tau_h}\ind{X_s>x} ds ]}{\Ex_x[\tau_h]} =\lim_{h \searrow 0} \frac{ \Ex_x[ \int_0^{\tau_h}\ind{X_s<x} ds ]}{\Ex_x[\tau_h]} = \frac{1}{2},\label{reichtaus1}
    \intertext{and}
     &\Ex_x\left[ \int_0^{\tau_h}\ind{X_s=x} ds \right] = 0 \,\ttt{ for all }h>0. \label{reichtaus2}
\end{align}
For this purpose observe
\begin{align*}
    \Ex_x\left[\int_0^{\tau_h} f(X_s)ds\right] =& \Ex_x\left[\int_0^{\tau_h}\ind{X_s>x} f(X_s)ds\right]+\Ex_x\left[\int_0^{\tau_h} \ind{X_s<x} f(X_s)ds\right]\\
    &+\Ex_x\left[\int_0^{\tau_h} \ind{X_s=x}f(X_s)ds\right].
\end{align*}
For $\eps >0$ we choose $h>0$ such that $\vert f(y)-f(x-)\vert< \eps$ for all $y\in [x-h,x)$ and $\vert f(y)-f(x+)\vert <\eps $ for all $y \in (x,x+h]$. With this we get 
\begin{align*}
   &\left\vert \frac{\Ex_x\left[\int_0^{\tau_h}\ind{X_s>x} f(X_s)ds\right]}{\Ex_x[\tau_h]} - \frac{1}{2}f(x+)\right\vert\\[0.1cm]
   \le& \left\vert \frac{\Ex_x\left[\int_0^{\tau_h}\ind{X_s>x} f(X_s)ds\right]}{\Ex_x[\tau_h]} - \frac{\Ex_x\left[\int_0^{\tau_h}\ind{X_s>x} f(x+)ds\right]}{\Ex_x[\tau_h]} \right \vert \\[0.1cm]
   &+ \left \vert \frac{\Ex_x\left[\int_0^{\tau_h}\ind{X_s>x} f(x+)ds\right]}{\Ex_x[\tau_h]} - \frac{1}{2}f(x+)  \right \vert\\[0.1cm]
   =& \,\frac{\Ex_x\left[\int_0^{\tau_h}\ind{X_s>x} \vert f(X_s)-f(x+)\vert ds\right]}{\Ex_x[\tau_h]} + \vert f(x+) \vert \cdot \left \vert \frac{\Ex_x\left[\int_0^{\tau_h}\ind{X_s>x} ds\right]}{\Ex_x[\tau_h]} - \frac{1}{2}  \right \vert\\[0.1cm]
   \le&\, \eps + \vert f(x+) \vert \cdot \left \vert \frac{\Ex_x\left[\int_0^{\tau_h}\ind{X_s>x} ds\right]}{\Ex_x[\tau_h]} - \frac{1}{2}  \right \vert 
\end{align*}
An analogous estimate with $\ind{X_s>x}$ and $f(x+)$ replaced by $\ind{X_s<x}$ and $f(x-)$, respectively, together with $\Ex_x\left[\int_0^{\tau_h} \ind{X_s=x}f(X_s)ds\right]=f(x) \Ex_x\left[\int_0^{\tau_h} \ind{X_s=x}ds\right]$ shows that \eqref{reichtaus1} and \eqref{reichtaus2} are in fact sufficient for \eqref{nervmichnichtbitte}. 

The transition densities $p_t(x,y):=\PR_x(X_t\in dy)$ can be estimated as follows:
\begin{align}
    p_t(x,y) \le \frac{K_1}{\sqrt{t}}\exp\left(-K_2\frac{(y-x)^2}{t}\right),\label{dichteabsch}
\end{align} 
cf.\ \cite[Equation (1.2)]{Shuenn}.
With this we immediately obtain \eqref{reichtaus2} via
\begin{align*}
    0 &\le \Ex_x\left[ \int_0^{\tau_h}\ind{X_s=x} ds \right] \le \Ex_x\left[ \int_0^{\infty}\ind{X_s=x} ds \right] = \int_0^{\infty}\Ex_x\left[ \ind{X_s=x}  \right]ds \\[0.1cm]
    &\le \int_0^{\infty} \int_{\R} \ind{y=x}\frac{K_1}{\sqrt{s}}\exp\left(-K_2\frac{(y-x)^2}{s}\right) dy ds=0.
\end{align*}
Concerning \eqref{reichtaus1} we only show $\lim_{h \searrow 0} \frac{ \Ex_x[ \int_0^{\tau_h}\ind{X_s>x} ds ]}{\Ex_x[\tau_h]} =\frac{1}{2}$ as for the other term the argument is exactly the same. By \cite[Theorem 5.5]{dynkin_markov} and Fubini's theorem
\begin{align*}
    \lim_{h \searrow 0} \frac{ \Ex_x[ \int_0^{\tau_h}\ind{X_s>x} ds ]}{\Ex_x[\tau_h]} = \lim_{t\searrow0}\frac{\Ex_x[\int_0^t\ind{X_s>x}ds]}{t}= \lim_{t\searrow0}\frac{\int_0^t\PR_x(X_s>x)ds}{t}.
\end{align*} By \cite[Lemma 5.5]{bayraktar2022equilibria} $\lim_{s \searrow 0} \PR_x(X_s>x)=\frac{1}{2}$ and thus
\begin{align*}
    \lim_{t\searrow0}\frac{\int_0^t\PR_x(X_s>x)ds}{t}=\frac{1}{2}
\end{align*} which proves the claim.
\end{proof}

\bibliographystyle{abbrv}
\bibliography{literature}

\begin{thebibliography}{10}

\bibitem{3points}
S.~Ankirchner, N.~Kazi-Tani, M.~Klein, and T.~Kruse.
\newblock {Stopping with expectation constraints: 3 points suffice}.
\newblock {\em Electron. J. Probab.}, 24:1 -- 16, Paper No.\ 66, 2019.

\bibitem{AKK}
S.~Ankirchner, M.~Klein, and T.~Kruse.
\newblock A {V}erification {T}heorem for {O}ptimal {S}topping {P}roblems with {E}xpectation {C}onstraints.
\newblock {\em Appl. Math. Optim.}, 79(1):145--177, 2019.

\bibitem{bayraktar2022equilibria}
E.~Bayraktar, Z.~Wang, and Z.~Zhou.
\newblock Equilibria of time-inconsistent stopping for one-dimensional diffusion processes.
\newblock {\em Mathematical Finance}, 33(3):797--841, 2023.

\bibitem{BayraktarYao2023}
E.~Bayraktar and S.~Yao.
\newblock Stochastic control/stopping problem with expectation constraints.
\newblock {\em arXiv preprint arXiv:2305.18664}, 2023.

\bibitem{BayraktarYao2020}
E.~Bayraktar and S.~Yao.
\newblock {Optimal stopping with expectation constraints}.
\newblock {\em The Annals of Applied Probability}, 34(1B):917 -- 959, 2024.

\bibitem{Bayraktar_Zhang_Zhou}
E.~Bayraktar, J.~Zhang, and Z.~Zhou.
\newblock Time consistent stopping for the mean-standard deviation problem---the discrete time case.
\newblock {\em SIAM J. Financial Math.}, 10(3):667--697, 2019.

\bibitem{bjoerk2021time}
T.~Bj{\"o}rk, M.~Khapko, and A.~Murgoci.
\newblock {\em Time-Inconsistent Control Theory with Finance Applications}.
\newblock Springer Finance. Springer International Publishing, 2021.

\bibitem{blumenthal_getoor}
R.~M. Blumenthal and R.~K. Getoor.
\newblock {\em Markov processes and potential theory}.
\newblock Pure and Applied Mathematics, Vol. 29. Academic Press, New York, 1968.

\bibitem{BodnariuChristensenLindensjoe2022}
A.~Bodnariu, S.~Christensen, and K.~Lindensjö.
\newblock Local time pushed mixed equilibrium strategies for time-inconsistent stopping problems.
\newblock {\em SIAM Journal on Control and Optimization}, 62(2):1261--1290, 2024.

\bibitem{ChowYuZhou}
Y.-L. Chow, X.~Yu, and C.~Zhou.
\newblock On dynamic programming principle for stochastic control under expectation constraints.
\newblock {\em J. Optim. Theory Appl.}, 185(3):803--818, 2020.

\bibitem{ChristensenLindensjoe_equilibrium}
S.~Christensen and K.~Lindensj\"{o}.
\newblock On finding equilibrium stopping times for time-inconsistent {M}arkovian problems.
\newblock {\em SIAM J. Control Optim.}, 56(6):4228--4255, 2018.

\bibitem{ChristensenLindensjoe_mixedstrategy}
S.~Christensen and K.~Lindensj\"{o}.
\newblock On time-inconsistent stopping problems and mixed strategy stopping times.
\newblock {\em Stochastic Process. Appl.}, 130(5):2886--2917, 2020.

\bibitem{ChristensenLindensjoe_meanvariance}
S.~Christensen and K.~Lindensj\"{o}.
\newblock Time-inconsistent stopping, myopic adjustment and equilibrium stability: with a mean-variance application.
\newblock In {\em Stochastic modeling and control}, volume 122 of {\em Banach Center Publ.}, pages 53--76. Polish Acad. Sci. Inst. Math., Warsaw, 2020.

\bibitem{ChristensenLindensjoe_optimaldividend}
S.~Christensen and K.~Lindensj\"{o}.
\newblock Moment-constrained optimal dividends: precommitment and consistent planning.
\newblock {\em Adv. in Appl. Probab.}, 54(2):404--432, 2022.

\bibitem{dynkin_markov}
E.~Dynkin.
\newblock {\em Markov Processes: Volume 1}.
\newblock Grundlehren der mathematischen Wissenschaften. Springer-Verlag, Berlin Göttingen Heidelberg, 1965.

\bibitem{dynkin_markov2}
E.~Dynkin.
\newblock {\em Markov Processes: Volume 2}.
\newblock Grundlehren der mathematischen Wissenschaften. Springer-Verlag, Berlin Göttingen Heidelberg, 1965.

\bibitem{ebert2020_weighteddiscount}
S.~Ebert, W.~Wei, and X.~Y. Zhou.
\newblock Weighted discounting -- {O}n group diversity, time-inconsistency, and consequences for investment.
\newblock {\em J. Econom. Theory}, 189:105089, 2020.

\bibitem{ekeland2006noncommitment}
I.~Ekeland and A.~Lazrak.
\newblock Being serious about non-commitment: subgame perfect equilibrium in continuous time.
\newblock {\em arXiv preprint arXiv:math/0604264}, 2006.

\bibitem{huang2017timeconsistent}
Y.-J. Huang and A.~Nguyen-Huu.
\newblock Time-consistent stopping under decreasing impatience.
\newblock {\em Finance Stoch.}, 22(1):69--95, 2018.

\bibitem{huang2020}
Y.-J. Huang, A.~Nguyen-Huu, and X.~Y. Zhou.
\newblock General stopping behaviors of naïve and noncommitted sophisticated agents, with application to probability distortion.
\newblock {\em Math. Finance}, 30(1):310--340, 2020.

\bibitem{huang_zhou_diskr}
Y.-J. Huang and Z.~Zhou.
\newblock The optimal equilibrium for time-inconsistent stopping problems -- the discrete-time case.
\newblock {\em SIAM Journal on Control and Optimization}, 57(1):590--609, 2019.

\bibitem{huang2020_2}
Y.-J. Huang and Z.~Zhou.
\newblock Optimal equilibria for time-inconsistent stopping problems in continuous time.
\newblock {\em Mathematical Finance}, 30(3):1103--1134, 2020.

\bibitem{ito1974diffusion}
K.~It{\^o} and H.~McKean.
\newblock {\em Diffusion Processes and their Sample Paths: Reprint of the 1974 Edition}.
\newblock Grundlehren der mathematischen Wissenschaften. Springer Berlin Heidelberg, 1974.

\bibitem{kennedy1982}
D.~P. Kennedy.
\newblock On a constrained optimal stopping problem.
\newblock {\em J. Appl. Probab.}, 19(3):631--641, 1982.

\bibitem{Makasu}
C.~Makasu.
\newblock Bounds for a constrained optimal stopping problem.
\newblock {\em Optim. Lett.}, 3(4):499--505, 2009.

\bibitem{miller2017}
C.~W. Miller.
\newblock \protect{Non-linear PDE Approach to Time-Inconsistent Optimal Stopping}.
\newblock {\em SIAM J. Control Optim.}, 55(1):557--573, 2017.

\bibitem{oksendal_5th}
B.~{\O}ksendal.
\newblock {\em Stochastic Differential Equations}.
\newblock Springer Berlin, Heidelberg, 5 edition, 2003.

\bibitem{PedersenPeskirMeanVariance}
J.~L. Pedersen and G.~Peskir.
\newblock Optimal mean-variance selling strategies.
\newblock {\em Math. Financ. Econ.}, 10(2):203--220, 2016.

\bibitem{Peskir2007}
G.~Peskir.
\newblock {\em A Change-of-Variable Formula with Local Time on Surfaces}, page 70–96.
\newblock Springer Berlin Heidelberg, Berlin, Heidelberg, 2007.

\bibitem{PeskirQuickestDetection2012}
G.~Peskir.
\newblock Optimal detection of a hidden target: the median rule.
\newblock {\em Stochastic Process. Appl.}, 122(5):2249--2263, 2012.

\bibitem{PeskirShiryaev2006}
G.~Peskir and A.~Shiryaev.
\newblock {\em Optimal stopping and free-boundary problems}.
\newblock Lectures in Mathematics ETH Z\"{u}rich. Birkh\"{a}user Verlag, Basel, 2006.

\bibitem{RogersWilliamsVol2}
L.~Rogers and D.~Williams.
\newblock {\em Diffusions, {M}arkov processes, and {M}artingales. {V}ol.\ 2: Itô Calculus}.
\newblock Cambridge Mathematical Library. Cambridge University Press, second edition, 2000.

\bibitem{Shuenn}
S.-J. Sheu.
\newblock Some estimates of the transition density of a nondegenerate diffusion markov process.
\newblock {\em Ann. Probability}, 19(2):538--561, 1991.

\bibitem{strotz}
R.~Strotz.
\newblock Myopia and inconsistency in dynamic utility maximization.
\newblock {\em Rev. Econom. Stud.}, 23(3):165--180, 1955.

\bibitem{zhou_fin_time_non_exp}
Z.~Zhou.
\newblock Almost strong equilibria for time-inconsistent stopping problems under finite horizon in continuous time.
\newblock {\em Mathematical Finance}, pages 1--34, 2023.
\newblock \url{https://doi.org/10.1111/mafi.12428}.

\end{thebibliography}

\end{document}